\newtheorem{thm}{Theorem}[section]
\newtheorem{prop}[thm]{Proposition}
\newtheorem{conj}[thm]{Conjecture}
\theoremstyle{definition}
\newtheorem{defn}[thm]{Definition}
\theoremstyle{remark}
\title{Three-manifolds at infinity of  complex hyperbolic orbifolds}
\author{Jiming Ma}
\address{School of Mathematical Sciences, Fudan University, Shanghai, 200433, P. R. China}
\email{majiming@fudan.edu.cn}
\author{Baohua Xie}
\address{School of Mathematics, Hunan University, Changsha, 410082, China
}
\email{xiexbh@hnu.edu.cn}
\keywords{Complex hyperbolic surface, spherical CR uniformization, triangle groups, cusped hyperbolic 3-manifolds.}
\subjclass[2010]{20H10, 57M50, 22E40, 51M10.}
\date{May 21, 2022}
\thanks{Jiming Ma was partially supported by NSFC (No.12171092). Baohua Xie was supported by NSFC (No.11871202) and  Hunan Provincial Natural Science Foundation of China (No.2018JJ3024).}
\begin{document}

\maketitle

\begin{abstract}  We show the manifolds at infinity of the complex hyperbolic triangle groups  $\Delta_{3,4,4;\infty}$ and $\Delta_{3,4,6;\infty}$,  are one-cusped hyperbolic  3-manifolds $m038$ and $s090$ in the Snappy  Census respectively.  That is, these two manifolds admit spherical CR uniformizations.

 Moreover, these two hyperbolic  3-manifolds above can be obtained by Dehn surgeries on the first cusp of the  two-cusped hyperbolic  3-manifold $m295$  in the Snappy  Census with slopes $2$ and $4$ respectively.  In general, the main result in this paper  allow  us to conjecture that  the manifold at infinity of the complex hyperbolic triangle group  $\Delta_{3,4,n;\infty}$ is the one-cusped hyperbolic  3-manifold  obtained by Dehn surgery on the first cusp of  $m295$ with slope  $n-2$.

 \end{abstract}

\section{Introduction}

\subsection{Main results}
Thurston's work on 3-manifolds has shown that geometry has an important role to play in the study of topology of 3-manifolds.
There is a very close relationship between the topological properties of 3-manifolds and the existence of geometric structures on them.
A spherical CR-structure on a smooth 3-manifold $M$ is a maximal collection of distinguished charts modeled on the boundary $\partial \mathbf{H}^2_{\mathbb C}$
of the complex hyperbolic space $\mathbf{H}^2_{\mathbb C}$, where coordinates changes are restrictions of transformations from  $\mathbf{PU}(2,1)$.
 In other words, a {\it spherical CR-structure} is a $(G,X)$-structure with $G=\mathbf{PU}(2,1), X=S^3$. In contrast to the results on other geometric structures carried on 3-manifolds, there are relatively few examples known about spherical CR-structures.

 In general, it is very difficult to determine whether a 3-manifold admits a spherical CR-structure or not. Some of the first examples were given by
 Burns-Shnider \cite{BS:1976}. 3-manifolds with $Nil^{3}$-geometry naturally admit such structures, but by Goldman \cite{Goldman:1983}, any closed 3-manifold with Euclidean or $Sol^3$-geometry
 does not admit such structures.

We are interested in an important class of spherical CR-structures, called uniformizable spherical CR-structures. A spherical CR-structure on a 3-manifold $M$  is {\it uniformizable} if it is
obtained as $M=\Gamma\backslash \Omega_{\Gamma}$, where  $\Omega_{\Gamma}\subset \partial \mathbf{H}^2_{\mathbb C}$ is the set of discontinuity of discrete subgroup  $\Gamma$ acting on $\partial \mathbf{H}^2_{\mathbb C}=S^3$.
Constructing discrete subgroups of $\mathbf{PU}(2,1)$  can be used to constructed spherical CR-structures on 3-manifolds.

Thus, the study of the geometry of discrete subgroups of  $\mathbf{PU}(2,1)$ is crucial to the understanding of uniformizable spherical CR-structures.
Complex hyperbolic triangle groups provide  rich examples of such discrete subgroups. As far as we know, almost all known examples of uniformizable spherical CR-structures are related to complex hyperbolic triangle groups.

Let $\Delta_{p,q,r}$ be the abstract $(p,q,r)$ reflection triangle group with the presentation
$$\langle \sigma_1, \sigma_2, \sigma_3 | \sigma^2_1=\sigma^2_2=\sigma^2_3=(\sigma_2 \sigma_3)^p=(\sigma_3 \sigma_1)^q=(\sigma_1 \sigma_2)^r=id \rangle,$$
where $p,q,r$ are positive integers or $\infty$ satisfying $1/p+1/q+1/r<1$. For simplicity, we assume that $p \leq q \leq r$. If $p,q$ or $r$ equals $\infty$, then
the corresponding relation does not appear.
A \emph{complex hyperbolic $(p,q,r)$ triangle group} is a representation of $\Delta_{p,q,r}$ into $\mathbf{PU}(2,1)$
where the generators fix complex lines. It is well known  that the space of $(p,q,r)$-complex reflection triangle groups has real dimension one if $3 \leq p \leq q \leq r$. Sometimes, we denote the  representation of the triangle group $\Delta_{p,q,r}$ into $\mathbf{PU}(2,1)$ such that $\sigma_1 \sigma_3\sigma_2 \sigma_3$ of order $n$ by $\Delta_{p,q,r;n}$.

Richard Schwartz has conjectured the necessary and sufficient condition for a complex hyperbolic $(p,q,r)$ triangle group $\langle I_1,I_2,I_3\rangle < \mathbf{PU}(2,1)$ to be a discrete and faithful  representation of $\Delta_{p,q,r}$ \cite{schwartz-icm}. Schwartz's conjecture has been proved in a few cases.

We now provide a brief historical overview, before discussing our results.
 The study of complex hyperbolic triangle groups began with Goldman and Parker in \cite{GoPa}.
They considered complex hyperbolic ideal triangle groups, i.e. the case $p=q=r=\infty$, and obtained the result:

\begin{thm}[Goldman and Parker \cite{GoPa}]
Let $\Gamma=\Gamma_{t}=\langle I_1, I_2, I_3 \rangle < \mathbf{PU}(2,1)$ be a complex hyperbolic ideal triangle group, it is parameterized by $t \in [0, \infty)$. Let $t_{1}=\sqrt{105/3}$ and $t_{2}= \sqrt{125/3}$.
If $t > t_{2}$ then $\Gamma_{t}$ is not a discrete embedding of the  $(\infty,\infty,\infty)$ triangle group. If $t < t_{1}$ then $\Gamma_{t}$ is a discrete embedding of the  $(\infty,\infty,\infty)$ triangle group.
\end{thm}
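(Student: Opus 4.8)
The plan is to treat the two inequalities separately: non-discreteness for $t>t_2$ by producing an elliptic word, and discreteness for $t<t_1$ by a bisector fundamental domain together with the Poincar\'e polyhedron theorem. First I would fix coordinates. An ideal triangle group is generated by order-two complex reflections $I_1,I_2,I_3$ in complex lines $C_1,C_2,C_3$ that are pairwise asymptotic, $\overline{C_i}\cap\overline{C_j}=\{p_{ij}\}\subset\partial\mathbf{H}^2_{\mathbb C}$. Since a complex line is determined by two of its boundary points, the configuration is recorded by the triple $(p_{12},p_{23},p_{31})$, whose $\mathbf{PU}(2,1)$-orbit is governed by a single real invariant (the Cartan angular invariant); this yields the one-parameter family $\Gamma_t$. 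Because that invariant is cyclically symmetric, there is an order-three elliptic $J$ with $JI_jJ^{-1}=I_{j+1}$ (indices mod $3$), and one fixes $\mathbf{SU}(2,1)$-representatives so that $I_1$ and $J$---hence also $I_2,I_3$---are explicit functions of $t$.

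\emph{Part 1: $t>t_2$ forbids a discrete embedding.} I would follow the word $W=I_1I_2I_3$. With $\widetilde W\in\mathbf{SU}(2,1)$ a lift and $\tau(t)=\operatorname{tr}\widetilde W$, the element $W$ is loxodromic, parabolic, or elliptic according as Goldman's discriminant function
\[
f(\tau)=\abs{\tau}^4-8\operatorname{Re}(\tau^3)+18\abs{\tau}^2-27
\]
is positive, zero, or negative. A direct computation expresses $f(\tau(t))$ as an explicit function of $t^2$ with a single sign change at $t^2=125/3$, and one checks $f(\tau(t))<0$ for every $t>t_2$, so $W$ is elliptic there. For all but countably many such $t$ the rotation angles of $W$ are irrational multiples of $\pi$, whence $W$ has infinite order and $\Gamma_t$ is not discrete; for the remaining parameters $W$ has finite order, whereas $\sigma_1\sigma_2\sigma_3$ has infinite order in $\Delta_{\infty,\infty,\infty}=\mathbb Z/2\ast\mathbb Z/2\ast\mathbb Z/2$, so the representation is not faithful. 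In either case $\Gamma_t$ is not a discrete embedding.

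\emph{Part 2: $t<t_1$ gives a discrete embedding.} Let $q_0\in\mathbf{H}^2_{\mathbb C}$ be fixed by $J$, and let $B_j$ be the bisector equidistant from $q_0$ and $I_j(q_0)$. Then $B_{j+1}=J(B_j)$, the line $C_j$ lies in $B_j$, and $I_j$ preserves $B_j$ while exchanging its two half-spaces, acting on $B_j$ as the reflection across the complex slice $C_j$. Suppose the $B_j$ are pairwise disjoint inside $\mathbf{H}^2_{\mathbb C}$, meeting only at the ideal points $p_{ij}$. Then the intersection $D$ of the three closed half-spaces containing $q_0$ is a polyhedron whose sides are exactly $B_1,B_2,B_3$, each folded onto itself by $I_j$; its ridges are the loci $C_j$ (contributing the cycle relations $I_j^2=1$) and the ideal vertices $p_{ij}$ (complete parabolic cusps, with infinite dihedral stabilizers $\langle I_i,I_j\rangle$). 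The Poincar\'e polyhedron theorem then shows $D$ is a fundamental domain, so $\Gamma_t=\langle I_1,I_2,I_3\rangle$ is discrete with presentation $\langle I_1,I_2,I_3\mid I_1^2=I_2^2=I_3^2=1\rangle=\Delta_{\infty,\infty,\infty}$; i.e., $\Gamma_t$ is a discrete embedding.

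\emph{The main obstacle} is the disjointness assumed in Part 2: $\overline{B_i}\cap\overline{B_j}=\{p_{ij}\}$ for $i\neq j$. By the $J$-symmetry it suffices to handle one pair, but bisectors in $\mathbf{H}^2_{\mathbb C}$ are not totally geodesic and two of them may intersect in complicated, even disconnected, sets; so the disjointness must be extracted from a careful study of $B_i\cap B_j$ through its common-slice and common-meridian structure, in the style of Mostow and Goldman. It is exactly this disjointness that first fails at $t_1=\sqrt{105/3}$, which is why the method stops there, leaving the range $[t_1,t_2]$---later handled by Schwartz with a different, ``horotube'' argument---out of reach.
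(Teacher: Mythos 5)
This statement is quoted background: the paper does not prove it, but imports it from Goldman--Parker \cite{GoPa}, so there is no in-paper argument to compare against. Measured against the original proof, your outline does follow the same two-pronged strategy (an elliptic word rules out discrete embeddings past $t_2$; a polyhedron bounded by bisectors plus the Poincar\'e theorem gives discreteness below $t_1$), and the structural claims you make along the way are sound: $C_j$ is indeed the complex slice of the Dirichlet bisector $\mathcal{B}(q_0,I_jq_0)$ over the midpoint of $[q_0,I_jq_0]$, $I_j$ folds that bisector onto itself across $C_j$, and the dichotomy ``elliptic of infinite order $\Rightarrow$ not discrete, elliptic of finite order $\Rightarrow$ not faithful (since $\sigma_1\sigma_2\sigma_3$ has infinite order in $\mathbb Z/2*\mathbb Z/2*\mathbb Z/2$)'' is exactly the right way to read ``not a discrete embedding.''

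However, as a proof the proposal has a genuine gap, and you have named it yourself: the pairwise disjointness $\overline{B_i}\cap\overline{B_j}=\{p_{ij}\}$ for $t<t_1$ is not a side condition one can wave at --- it \emph{is} the discreteness half of the theorem. Everything else in Part 2 (the side-pairings, the cycle relations $I_j^2=1$, the cusp condition at the ideal vertices) is routine once disjointness is known; without it the Poincar\'e theorem does not apply and no part of the range $t<t_1$ is established. The analysis of when two coequidistant bisectors meet, via their slice decompositions, is precisely where the bulk of Goldman and Parker's paper lives and where the constant $105/3$ is extracted. Similarly, in Part 1 the value $125/3$ is produced by the unexecuted trace computation: you must exhibit the explicit one-parameter family $t\mapsto(I_1,I_2,I_3)$, compute $\tau(t)=\operatorname{tr}(\widetilde{I_1I_2I_3})$, and verify that Goldman's discriminant changes sign exactly at $t^2=125/3$; that computation is finite and mechanical but it is the sole source of the threshold, so asserting ``a direct computation shows'' leaves the quantitative content of the theorem unproved. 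In short: the architecture is the right one, but both numbers $t_1$ and $t_2$ enter only through the two steps you defer, so the proposal is a correct plan rather than a proof.
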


They conjectured that a complex hyperbolic ideal triangle group $\Gamma_{t}=\langle I_1, I_2, I_3 \rangle$ is discrete and faithful if and only if $I_1 I_2 I_3$ is not elliptic, that is, if and only if  $t \leq t_{2}$ in the above parameters.

Schwartz proved the Goldman-Parker conjecture in \cite{Schwartz:2001ann} (see a better proof in \cite{schwartz:2006}).
\begin{thm}[Schwartz \cite{Schwartz:2001ann}]
Let $\Gamma=\langle I_1, I_2, I_3 \rangle < \mathbf{PU}(2,1)$ be a complex hyperbolic ideal triangle group.
If $I_1 I_2 I_3$ is not elliptic, then $\Gamma$ is discrete and faithful.
Moreover, if $I_1 I_2 I_3$ is elliptic, then $\Gamma$ is not discrete.
\end{thm}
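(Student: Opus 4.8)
The plan is to treat the two implications separately, since they call for quite different arguments; the hard one is the forward direction, that $I_1I_2I_3$ non-elliptic implies $\Gamma$ discrete and faithful.

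For the non-discreteness statement I would start from the fact, already established by Goldman--Parker via the monotone trace function that defines the threshold $t_2$, that once $I_1I_2I_3$ is elliptic the representation is no longer a \emph{discrete embedding}; what must still be ruled out is that $\Gamma$ is discrete but not faithful, i.e. that $I_1I_2I_3$ is elliptic of \emph{finite} order. If $I_1I_2I_3$ is elliptic of infinite order, discreteness already fails in the cyclic subgroup it generates. In the finite-order case I would exploit real-analyticity in the parameter $t$: the parameters at which a fixed word is elliptic of finite order are isolated, so near such a parameter one either produces an infinite family of distinct elliptic elements whose rotation angles accumulate — which a discrete subgroup of $\mathbf{PU}(2,1)$ cannot contain — or finds a nearby word that is elliptic of infinite order; together with the openness of the elliptic locus this would give non-discreteness throughout the range where $I_1I_2I_3$ is elliptic.

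The substance of the theorem is the forward direction, and here the plan is to build, for every parameter with $I_1I_2I_3$ non-elliptic, an explicit fundamental domain and then invoke the Poincar\'e polyhedron theorem. Concretely I would: (i) pass to the index-two \emph{even subgroup} $\Gamma' = \langle g_1,g_2,g_3\rangle$ with $g_1 = I_2I_3$, $g_2 = I_3I_1$, $g_3 = I_1I_2$ and $g_1g_2g_3 = \mathrm{id}$ — the triangle being ideal, each $g_j$ is parabolic with a single fixed point $p_j \in S^3 = \partial\mathbf{H}^2_{\mathbb C}$ — and reduce to proving discreteness and faithfulness for $\Gamma'$; (ii) place, around the $\Gamma'$-orbit of each $p_j$, Schwartz's \emph{C-spheres}, spinal spheres (boundaries of bisectors) arranged so that $g_j$ and $g_j^{-1}$ pair opposite C-spheres, and assemble them into a solid region whose ideal boundary is a \emph{dented torus} — a surface in $S^3$ built from three lobes around $p_1,p_2,p_3$; (iii) verify the Poincar\'e conditions, namely that the finitely many C-spheres meet only along prescribed ridges, that the side-pairings are exactly $g_1^{\pm1},g_2^{\pm1},g_3^{\pm1}$, and that the ridge cycles close up with the correct total angle, compatibly with $g_1g_2g_3 = \mathrm{id}$ and the parabolic cusp data at the $p_j$. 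The Poincar\'e polyhedron theorem would then give discreteness of $\Gamma'$, completeness of the presentation (hence faithfulness), and a description of $\Gamma'\backslash\Omega_{\Gamma'}$.

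The hard part will be step (iii), and this is what makes the proof ``numerical'': bisectors in $\mathbf{H}^2_{\mathbb C}$ are not totally geodesic, and their pairwise intersections are complicated real-algebraic hypersurfaces, so establishing the disjointness and ridge-incidence statements uniformly over the whole parameter interval — up to and including the critical value $t_2$, where one lobe of the dented torus degenerates as the corresponding element becomes parabolic — requires delicate estimates best carried out with rigorous computer assistance. A secondary difficulty is a local-finiteness (``visibility'') statement: one must show that no further $\Gamma'$-translates of the C-spheres protrude into the candidate region, so that finitely many of them genuinely bound a fundamental domain, and this is where the combinatorial bookkeeping of the dented torus is essential.
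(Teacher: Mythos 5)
First, a point of reference: the paper you are looking at does not prove this statement at all. It is quoted as background (Theorem 1.2) and attributed to Schwartz's Annals paper \cite{Schwartz:2001ann}, whose proof is a long, computer-assisted argument; so there is no in-paper proof to compare against, and your proposal has to be judged against Schwartz's. For the forward direction your outline does track Schwartz's actual strategy quite faithfully -- pass to the even subgroup, surround the parabolic fixed points with spinal ``C-spheres'' assembled into a dented torus, and verify the Poincar\'e polyhedron conditions uniformly over the parameter interval up to the degenerate value $t_2$ by rigorous numerics. But as written it is a roadmap, not a proof: every load-bearing step -- the explicit choice of the C-spheres, the disjointness and ridge-incidence verifications, the local-finiteness/visibility argument -- is named and then deferred. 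You correctly identify these as the hard parts, which is to your credit, but none of them is carried out, and they constitute essentially the entire content of the theorem.

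The reverse direction contains a genuine logical gap beyond mere incompleteness. Your argument for the case where $I_1I_2I_3$ is elliptic of \emph{finite} order is: such parameters are isolated, nearby parameters give indiscrete groups, hence (together with ``openness of the elliptic locus'') the group is indiscrete there too. Indiscreteness is not a closed condition, so knowing that all nearby representations are indiscrete tells you nothing about the representation at the isolated parameter itself; indeed the whole subtlety of the ``moreover'' clause is precisely that a finite-order elliptic $I_1I_2I_3$ is compatible a priori with a discrete (unfaithful) group, and this is what must be excluded. What is needed, and what Schwartz supplies, is an explicit witness of indiscreteness at each such parameter -- for instance another word in the group that is elliptic with irrational rotation angle, or an explicit accumulation of group elements -- and your sketch gestures at ``an infinite family of distinct elliptic elements whose rotation angles accumulate'' without constructing it. Until that witness is produced, the finite-order case is unproved.
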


Furthermore, he analyzed the group when $I_1 I_2 I_3$ is parabolic.
\begin{thm}[Schwartz \cite{Schwartz:2001acta}]
Let $\Gamma=\langle I_1, I_2, I_3 \rangle$ be the complex hyperbolic ideal triangle group with $I_1 I_2 I_3$ being parabolic.
Let $\Gamma'$ be the even subgroup of $\Gamma$. Then the manifold at infinity of the quotient ${\bf H}^2_{\mathbb C}/{\Gamma'}$ is commensurable with the
Whitehead link complement in the 3-sphere.
\end{thm}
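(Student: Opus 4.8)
The plan is to realize a manifold commensurable with the Whitehead link complement \emph{explicitly} as the boundary quotient $\Omega_{\Gamma}/\Gamma'$ — which is, by definition, the manifold at infinity of $\mathbf{H}^2_{\mathbb C}/\Gamma'$ — by a direct study of the action of $\Gamma'$ on the domain of discontinuity $\Omega_{\Gamma}\subset\partial\mathbf{H}^2_{\mathbb C}=S^3$. \emph{Normalization.} In the Goldman--Parker family $\Gamma_t$ of complex hyperbolic ideal triangle groups, requiring $I_1I_2I_3$ to be parabolic singles out the single parameter $t=t_2=\sqrt{125/3}$, the top of the discrete interval in the theorems quoted above, and the corresponding group is unique up to conjugacy in $\mathbf{PU}(2,1)$ (up to complex conjugation, which does not change the boundary manifold); fix explicit lifts $I_1,I_2,I_3\in\mathbf{SU}(2,1)$. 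The even subgroup $\Gamma'=\langle I_1I_2,\, I_2I_3\rangle$ has index two in $\Gamma$; one checks that it acts freely on $\Omega_{\Gamma}$ (no even word is elliptic with a fixed point in $\Omega_{\Gamma}$), and its parabolic subgroups — generated by the $I_iI_j$ and by powers of $I_1I_2I_3$ — prescribe the cusps of the quotient, each with a Euclidean torus cross-section.

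\emph{A fundamental domain at infinity.} Following the bisector techniques of Mostow and Goldman--Parker, I would build a polyhedron $D\subset\Omega_{\Gamma}$ whose faces are pieces of bisectors (equidistant hypersurfaces), with each parabolic fixed point truncated by a horospherical cusp face, arranged so that the faces are paired by short words in $I_1I_2$ and $I_2I_3$ and meet only along a prescribed lower-dimensional spine. One then applies a boundary version of the Poincar\'e polyhedron theorem: verifying the edge-cycle relations, local finiteness, and the tiling condition near each cusp shows that $D$ is a genuine fundamental domain for $\Gamma'$ on $\Omega_{\Gamma}$, and simultaneously yields a presentation of $\Gamma'$ from the side-pairing cycles. \textbf{This geometric-combinatorial verification is the crux and the main obstacle}: establishing that the chosen bisectors are embedded, disjoint away from the prescribed spine, and glue up precisely as a cusped hyperbolic $3$-manifold requires is exactly the technical heart of Schwartz's argument and the step I expect to absorb essentially all of the work, the parabolic endpoint $t_2$ being the borderline case where the relevant estimates are tightest.

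\emph{Identifying the quotient.} From a valid $D$ one reads off the homeomorphism type of the open $3$-manifold $M=\Omega_{\Gamma}/\Gamma'$ together with its Euclidean cusp cross-sections, and obtains an ideal cell decomposition of $M$. It then remains to recognize $M$ as commensurable with the Whitehead link complement. I would do this either by matching the computed fundamental-group presentation and peripheral subgroups against those of the Whitehead link complement, or by comparing ideal cell decompositions — using that the Whitehead link complement is a single regular ideal octahedron — after passing if necessary to a common finite cover, which is all that commensurability demands. Because cusped hyperbolic $3$-manifolds of small volume are rigidly classified, once the gluing data of $M$ are pinned down the identification (and, with more care, the precise homeomorphism type rather than mere commensurability) is forced, completing the proof.
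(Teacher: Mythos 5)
The first thing to note is that the paper does not actually prove this statement: it is quoted as Theorem~1.3 from Schwartz's Acta paper, so the only ``proof'' in the text is the citation, and your proposal must be measured against Schwartz's original argument and against the Ford-domain methodology this paper uses for its own theorems. As a roadmap your plan is the standard template in this subject (fundamental domain on $\Omega_\Gamma$, Poincar\'e polyhedron theorem, identification of the quotient), and your preliminary observations are sound: at the critical parameter $t_2$ the representation is discrete and faithful, the even subgroup of $\mathbb{Z}_2*\mathbb{Z}_2*\mathbb{Z}_2$ is free of rank two and hence torsion-free, so it does act freely on $\Omega_\Gamma$, and the parabolic conjugacy classes do govern the cusps.

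The genuine gap is that the proof is never carried out. The entire mathematical content --- exhibiting a polyhedron whose faces are embedded, meet only along the prescribed ridges, and satisfy the hypotheses of the Poincar\'e polyhedron theorem --- is deferred behind ``I would build,'' and you yourself flag this as the step absorbing essentially all of the work; a proof outline whose crux is declared but not executed is not a proof. Nor is the deferred step a routine verification: at the parabolic endpoint the relevant equidistant surfaces degenerate in delicate ways, and Schwartz's actual argument does \emph{not} use bisectors at all --- he builds the fundamental domain out of hybrid ``$\mathbb{R}$-spheres'' foliated by arcs of $\mathbb{R}$-circles precisely because bisector-bounded domains could not be controlled for this group, so your specific choice of faces is not known to work and may fail. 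The final identification is also understated: establishing commensurability (rather than homeomorphism) with the Whitehead link complement requires producing an explicit common finite cover or compatible ideal decompositions, which in Schwartz's paper is a substantial combinatorial argument, not a presentation match fed to software. In short, the proposal is a plausible alternative strategy --- closer in spirit to the Ford-domain computations of Sections~4--6 of this paper than to Schwartz's construction --- but every hard step is left open.
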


Recently, Schwartz's conjecture was shown for complex hyperbolic $(3,3,n)$ triangle groups with positive integer $n\geq 4$ in \cite{ParkerWX:2016} and $n=\infty$ in \cite{ParkerWill:2016}.
\begin{thm}[Parker, Wang and Xie \cite{ParkerWX:2016}, Parker and Will \cite{ParkerWill:2016}]
Let $n \geq 4$, and let $\Gamma=\langle I_1, I_2, I_3 \rangle$ be a complex hyperbolic $(3,3,n)$ triangle group.
Then $\Gamma$ is a discrete and faithful representation of  the  $(3,3,n)$ triangle group if and only if $I_1 I_3 I_2 I_3$ is not elliptic.
\end{thm}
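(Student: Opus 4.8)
The plan is to adapt the Goldman--Parker--Schwartz strategy for ideal triangle groups to the $(3,3,n)$ situation. Since the space of $(3,3,n)$ complex reflection triangle groups is one real dimensional, I would first fix a normalization: choose polar vectors $n_1,n_2,n_3$ for the mirrors of $I_1,I_2,I_3$ and a Hermitian form of signature $(2,1)$ so that the relations $(I_2I_3)^3=(I_3I_1)^3=(I_1I_2)^n=\mathrm{id}$ hold identically, with a single real parameter $t$ recording the remaining modulus of the configuration (equivalently the angular invariant of the triple of mirrors, or the trace of a fixed short word). This presents the family as $\Gamma_t=\langle I_1,I_2,I_3\rangle$ with $t$ ranging over an interval.

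Next I would study the distinguished word $W_t=I_1I_3I_2I_3$. Computing $\mathrm{tr}(W_t)$ as an explicit function of $t$ locates the parameter $t_\ast$ at which $W_t$ is parabolic, and shows that on one side of $t_\ast$ the element $W_t$ is loxodromic, giving a closed interval $\mathcal I=\{t : W_t\ \text{is not elliptic}\}$ with the parabolic group at one endpoint, while on the other side $W_t$ is regular elliptic. For $t\notin\mathcal I$ one shows $\Gamma_t$ is not a discrete faithful representation: when $W_t$ is regular elliptic its rotation data forces $\langle W_t\rangle$, hence $\Gamma_t$, to be non-discrete, and at the exceptional parameters where $W_t$ has finite order one instead gets a relation not implied by the $(3,3,n)$ presentation, so $\Gamma_t$ is unfaithful. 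This is the ``only if'' direction.

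The substance of the proof is the ``if'' direction. For $t\in\mathcal I$ I would construct an explicit fundamental domain $D_t\subset\mathbf{H}^2_{\mathbb C}$ for $\Gamma_t$, or for a convenient finite-index subgroup such as the even (``holomorphic'') subgroup, bounded by bisectors: either a Ford-type domain assembled from isometric spheres of a well-chosen generating set, or a Dirichlet-type domain centred at a point with large stabilizer. One then identifies the face-pairing maps among the $I_i$ and their conjugates, determines the ridge (codimension-two) cycles, and verifies the hypotheses of the Poincar\'e polyhedron theorem; this yields discreteness of $\Gamma_t$ together with a group presentation which, compared with $\langle\sigma_1,\sigma_2,\sigma_3\mid\sigma_i^2=(\sigma_2\sigma_3)^3=(\sigma_3\sigma_1)^3=(\sigma_1\sigma_2)^n=\mathrm{id}\rangle$, gives faithfulness. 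Since the combinatorial type of $D_t$ should be constant as $t$ ranges over the interior of $\mathcal I$, the verification is carried out once and then propagated; at the endpoint $t_\ast$ the domain becomes non-compact because of the new parabolic, and one invokes the cusped version of the Poincar\'e theorem together with explicit invariant horoballs.

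I expect the fundamental-domain analysis to be the main obstacle: guessing the correct polyhedron, and then controlling how the bounding bisectors and their Giraud-disk intersections meet combinatorially as $t$ varies — bisectors in $\mathbf{H}^2_{\mathbb C}$ are not totally geodesic, so their pairwise and triple intersections are delicate — while keeping the argument uniform in $t$ and treating the cusp at $t_\ast$ carefully. An alternative, in the spirit of Schwartz, is to build a $\Gamma_t$-invariant decomposition of the domain of discontinuity in $S^3=\partial\mathbf{H}^2_{\mathbb C}$ and analyze it directly, but proving the required tessellation statement rigorously across the whole parameter interval is the same difficulty in another guise.
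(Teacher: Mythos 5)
You should first note that the paper does not prove this statement at all: it is Theorem 1.4 of the introduction, quoted with attribution from Parker--Wang--Xie and Parker--Will, so there is no internal proof to compare your argument against. Measured against the actual proofs in those references, your ``only if'' direction is essentially complete and correct: in $\Delta_{3,3,n}$ the word $\sigma_1\sigma_3\sigma_2\sigma_3$ has infinite order, so if $I_1I_3I_2I_3$ is elliptic it is either of infinite order (whence $\langle I_1I_3I_2I_3\rangle$ has indiscrete closure, so $\Gamma$ is not discrete) or of finite order (whence the representation is not faithful). That dichotomy is exactly the standard argument.

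The genuine gap is that your ``if'' direction is a research program, not a proof. Everything that makes the theorem hard --- writing down the explicit polyhedron bounded by bisectors or isometric spheres, identifying the side-pairing maps, computing the ridge cycles on the Giraud disks, checking the local tiling and completeness hypotheses of the Poincar\'e polyhedron theorem, and handling the parabolic endpoint with invariant horoballs --- is deferred with phrases like ``one then identifies'' and ``one verifies.'' Two specific points where the deferral hides real difficulty: (i) your assertion that the combinatorial type of $D_t$ ``should be constant'' over the interior of $\mathcal I$ is itself a substantial claim that must be proved (in the cited works the combinatorics is pinned down by explicit computation, and the present paper's Sections 4--6 illustrate how delicate this is even for a single parameter value, where one isometric sphere can contribute several 2-faces to the ideal boundary); and (ii) the choice between a Ford domain and a Dirichlet domain is not cosmetic --- for $(3,3,n)$ with $n$ finite the product $I_1I_2$ is elliptic of order $n$ rather than parabolic, so the center of the domain and the structure of the stabilizer $\Gamma_\infty$ change, and the cusped version of the Poincar\'e theorem is needed only at the parabolic word $I_1I_3I_2I_3$, not at $q_\infty$. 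As a description of the method of \cite{ParkerWX:2016} and \cite{ParkerWill:2016} your outline is accurate; as a proof it establishes only the easy half.
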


There are some interesting results on complex hyperbolic $(3,3,n)$ triangle groups with $I_1I_3I_2I_3$ being parabolic.
\begin{thm}[Deraux and Falbel \cite{DerauxF:2015}, Deraux \cite{Deraux:2015} and Acosta \cite{Acosta:2019}]
 Let $4 \leq n \leq +\infty $, and let $\Gamma=\langle I_1, I_2, I_3 \rangle$ be a complex hyperbolic $(3,3,n)$ triangle group with $I_1I_3I_2I_3$ being parabolic.
Let $\Gamma'$ be the even subgroup of $\Gamma$. Then the manifolds at infinity of  the quotient ${\bf H}^2_{\mathbb C}/{\Gamma'}$ is a Dehn surgery on the one of the cusps of the Whitehead link complement with slope $n-2$.
\end{thm}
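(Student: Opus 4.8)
The plan is to realize the even subgroup $\Gamma'$ as the holonomy of an explicit uniformizable spherical CR structure and then to identify the underlying $3$-manifold and its peripheral filling. First I would fix the one-real-parameter family of $(3,3,n)$ complex reflection triangle groups and solve for the unique parameter at which $\sigma_1\sigma_3\sigma_2\sigma_3$, hence $I_1I_3I_2I_3$, is parabolic; at that parameter one writes down explicit matrices for $I_1,I_2,I_3$ in $SU(2,1)$. The even subgroup $\Gamma'=\langle I_1I_2,\, I_2I_3\rangle$ has index $2$ in $\Gamma$ and the same limit set $\Lambda_\Gamma\subset\partial\mathbf{H}^2_{\mathbb C}$, so the manifold at infinity of $\mathbf{H}^2_{\mathbb C}/\Gamma'$ is the quotient $\Gamma'\backslash\Omega_\Gamma$, where $\Omega_\Gamma=S^3\smallsetminus\Lambda_\Gamma$.

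The heart of the argument is the construction of an explicit fundamental polyhedron $D\subset\Omega_\Gamma$ for the $\Gamma'$-action. Following the bisector/spinal-sphere technology used for the figure-eight and Whitehead cases (and the more tractable $\mathbb{R}$-sphere and $\mathbb{C}$-sphere variants), I would choose the bounding hypersurfaces compatibly with the parabolic fixed point of $I_1I_3I_2I_3$ so that near that point $D$ meets a Heisenberg horosphere in a fundamental domain for a lattice (a torus cross-section; for $n=\infty$ there is a second parabolic and hence a second torus). One then enumerates the codimension-one faces, determines the face-pairing maps in terms of the generators and their inverses, computes the edge cycles, and checks that each cycle transformation has finite order with total angle $2\pi/k$. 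Invoking the Poincaré polyhedron theorem yields that $D$ is a fundamental domain, that $\Gamma'$ is discrete, that $\Omega_\Gamma$ is nonempty with the expected limit set, and that $\Gamma'\backslash\Omega_\Gamma$ is a compact $3$-manifold with torus boundary once the cusp is truncated. Computing $\pi_1$, the peripheral data, and standard invariants (or matching against the SnapPy census) then names the manifold.

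Finally I would organize the family over $n$ to extract the Dehn surgery statement. The $n=\infty$ group has two cusps and the construction exhibits $\Gamma'_\infty\backslash\Omega$ as the Whitehead link complement; for finite $n$ the same combinatorial polyhedron persists, except that one peripheral torus is replaced by the boundary of a surgery solid torus, and tracking how the generator $I_1I_3I_2I_3$ changes with $n$ — it passes from parabolic to a rotation forced by the $(3,3,n)$ relation — pins down the filling slope on that cusp as exactly $n-2$. Conceptually this is the spherical CR analogue of the horotube surgery picture: the finite-$n$ structures are Dehn surgeries of the $n=\infty$ structure along a horotube, with coefficients determined by the order relation, and $n=\infty$ corresponds to the empty (unfilled) slope.

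The main obstacle will be Step~2: producing a polyhedron whose combinatorics one can actually verify, checking every edge cycle together with the local finiteness and tiling hypotheses of the Poincaré theorem, and controlling the limit set $\Lambda_\Gamma$ precisely enough to know that $D$ does not accumulate on it. A secondary, more structural difficulty is making the slope $n-2$ assertion uniform in $n$ (including the degenerate value $n=\infty$) rather than handling each $n$ separately — this is presumably where a single argument valid across the whole family, as in Acosta's treatment, becomes necessary.
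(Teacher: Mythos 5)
This statement is quoted from Deraux--Falbel, Deraux, and Acosta; the paper gives no proof of it, so the only fair comparison is with the strategy of those references (which is also the strategy this paper uses for its own $(3,4,n)$ results). Your outline matches that strategy in broad terms: pin down the parameter by the parabolicity condition, pass to the index-two even subgroup, build an explicit fundamental polyhedron bounded by bisectors, verify it with the Poincar\'e polyhedron theorem, read off the manifold at infinity from the ideal boundary, and interpret the finite-$n$ cases as fillings of the $n=\infty$ (Whitehead link complement) picture. One caveat on emphasis: for general $n$ Acosta does not re-run the polyhedron construction for each $n$; he proves a spherical CR Dehn surgery theorem and deforms the Parker--Will uniformization, which is the "single argument valid across the whole family" you correctly anticipate needing.

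There is, however, a concrete error in the step where you extract the slope. You write that one should track "how the generator $I_1I_3I_2I_3$ changes with $n$ --- it passes from parabolic to a rotation forced by the $(3,3,n)$ relation." This has the two peripheral elements swapped. By hypothesis $I_1I_3I_2I_3$ is parabolic for \emph{every} $n$ in the family --- that is precisely the condition cutting out the point in the one-dimensional deformation space --- and it is the holonomy of the cusp that survives in the one-cusped filled manifold. The element whose type varies is $I_1I_2$: it is parabolic when $n=\infty$ (giving the second cusp of the Whitehead link complement) and elliptic of order $n$ for finite $n$, and it is the cusp stabilized by $I_1I_2$ that gets filled, with the slope $n-2$ determined by the order of $I_1I_2$ relative to the chosen meridian--longitude basis. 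As written, your tracking argument would find that $I_1I_3I_2I_3$ does not change at all and would produce no filling slope; once the roles are corrected the argument goes through as in Acosta's treatment.
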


Note that our choice of the  meridian-longitude systems of the  Whitehead link complement in Theorem 1.5 is different from that in \cite{Acosta:2019}, the  meridian-longitude systems chosen here seems more coherent for the 3-manifold at infinity of the complex hyperbolic triangle group  $\Delta_{3,4,n; \infty}$ below.

These deformations provide an attractive problem, because they furnish some of the simplest interesting examples in the still mysterious subject of complex hyperbolic deformations. While some progress has been made in understanding these examples, there is still a lot unknown about them.

The main purpose of this paper is to study the geometry of triangle groups $\Delta_{3,4,\infty;4}$ and $\Delta_{3,4,\infty;6}$.
Thompson showed  \cite{Thompson:2010} that $\Delta_{3,4,\infty;4}$ and $\Delta_{3,4,\infty;6}$ are arithmetic subgroups of  $\mathbf{PU}(2,1)$, thus they are discrete. We will construct the Ford domains for these two
 groups, this will provide another proof of the discreteness of such two groups. Furthermore, we will identify the manifolds at infinity for them.
It is well-known \cite{kpt, Thompson:2010} that $\Delta_{p,q,r; n}$ and $\Delta_{p,q,n; r}$ are isomorphic. So we often write this family of groups as $\Delta_{3,4,\infty; n}$ or $\Delta_{3,4,n; \infty}$ for convenience.

Our main results are
\begin{thm} \label{thm:main}
Let $\Gamma=\langle I_1, I_2, I_3 \rangle$ be the complex hyperbolic  triangle group $\Delta_{3,4,\infty;n}$ with $I_1I_3I_2I_3$ of order $n$.
\begin{itemize}
  \item If $n=4$, then the manifold at infinity of the even subgroup $\langle I_1I_2,I_2I_3\rangle$ of $\Gamma$ is  the one-cusped hyperbolic 3-manifold $m038$ in the Snappy  Census.
  \item If $n=6$, then the manifold at infinity of the even subgroup $\langle I_1I_2,I_2I_3\rangle$ of $\Gamma$ is  the one-cusped hyperbolic  3-manifold $s090$ in the Snappy  Census.
\end{itemize}
\end{thm}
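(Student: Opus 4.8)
The plan is to adapt the strategy used by Schwartz and, in the parabolic-perturbation setting, by Deraux--Falbel, Deraux, Acosta and Parker--Will: build an explicit Ford (isometric-sphere) fundamental domain for the action of $\Gamma=\Delta_{3,4,\infty;n}$ on $\mathbf{H}^2_{\mathbb C}$, verify it via the complex-hyperbolic Poincar\'e polyhedron theorem, and then read off the manifold at infinity from the part of this domain lying on $\partial\mathbf{H}^2_{\mathbb C}=S^3$ together with the restriction of the side-pairing maps.

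First I would fix explicit matrices. The variety of complex hyperbolic $(3,4,\infty)$ reflection triangle groups is one-real-dimensional; I would parametrize it through the Gram matrix of the polar vectors $\mathbf{n}_1,\mathbf{n}_2,\mathbf{n}_3$ of the three mirrors, normalized so that the Hermitian products $\langle\mathbf{n}_2,\mathbf{n}_3\rangle$, $\langle\mathbf{n}_3,\mathbf{n}_1\rangle$, $\langle\mathbf{n}_1,\mathbf{n}_2\rangle$ encode the angles $\pi/3$, $\pi/4$ and $0$ (asymptotic mirrors, as $r=\infty$), leaving one free phase $t$. Imposing that $\sigma_1\sigma_3\sigma_2\sigma_3$, i.e. $I_1I_3I_2I_3$, be regular elliptic of order exactly $n$ pins down $t$, hence the group, for each of $n=4$ and $n=6$; the arithmeticity established by Thompson \cite{Thompson:2010} guarantees that the resulting entries are manageable algebraic numbers. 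I would then conjugate so that the parabolic element $I_1I_2$ fixes the point at infinity of the Siegel domain model and work throughout in Heisenberg coordinates on $\partial\mathbf{H}^2_{\mathbb C}\setminus\{\infty\}$, with $\mathrm{Stab}_\Gamma(\infty)$ acting by Heisenberg isometries.

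Next comes the combinatorial model. I would propose a candidate Ford domain $D$ as the intersection of the exteriors of the isometric spheres of a short, explicitly listed family of elements of $\Gamma$, taken modulo $\mathrm{Stab}_\Gamma(\infty)$. Recognizing that this finite list is enough --- equivalently, understanding how the corresponding bisectors and their spinal spheres meet, using Goldman's analysis of bisector intersections and the location of the Giraud points --- is the part that requires genuine work and is where I expect the main obstacle to lie. Given a conjectural face list, I would pair the faces by explicit side-pairing transformations in $\Gamma$ and then check the hypotheses of the Poincar\'e polyhedron theorem: local finiteness, compatibility of the pairings, and that every ridge cycle closes up with the correct total dihedral-angle sum, so that the cycle transformations have the prescribed finite orders and tessellate around the ridge. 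The presentation extracted from the side-pairings and cycle relations must coincide with that of $\Delta_{3,4,\infty;n}$; this simultaneously re-proves discreteness and shows $D$ is a fundamental domain.

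Finally, the manifold at infinity. The trace $D\cap S^3$ is a compact polyhedron (minus the single parabolic fixed point, whose link is a torus) bounded by pieces of spinal spheres, and the side-pairings of $D$ restrict to face-identifications on it. Passing to the even subgroup $\Gamma'=\langle I_1I_2,I_2I_3\rangle$, which has index two in $\Gamma$ and acts freely on $\Omega_\Gamma$ since it is the kernel of $I_i\mapsto 1\in\mathbb{Z}/2$ and hence contains no complex reflection, so its nontrivial torsion is regular elliptic with fixed points inside $\mathbf{H}^2_{\mathbb C}$, one obtains an explicit cell decomposition, and thus a triangulation and a finite presentation of $\pi_1$, of the one-cusped $3$-manifold $\Omega_{\Gamma'}/\Gamma'$. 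To certify that this manifold is $m038$ (resp. $s090$), rather than merely matching it numerically, I would feed the triangulation to SnapPy and use its verified interval-arithmetic routines to confirm the complete hyperbolic structure and the isometry type, or else exhibit by hand an isomorphism of $\pi_1$ onto that of the census manifold carrying peripheral data to peripheral data. As a final consistency check --- and as the computation behind the stated conjecture --- I would verify that these manifolds are the Dehn fillings of the two-cusped census manifold $m295$ along its first cusp with slopes $2$ and $4$, which is exactly what one predicts by letting the $\Delta_{3,4,\infty;n}$ picture degenerate as $n\to\infty$.
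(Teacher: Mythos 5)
Your proposal follows essentially the same route as the paper: explicit matrices with $I_1I_2$ parabolic at $q_\infty$, a Ford domain bounded by isometric spheres of a finite list of group elements, verification via the Poincar\'e polyhedron theorem, extraction of the cell structure of the ideal boundary with its side-pairings, and identification of the resulting one-cusped manifold with the census manifold via its fundamental group. The only cosmetic differences are that the paper works directly with the even subgroup $\langle A,B\rangle$ rather than descending from the full reflection group, collapses the ideal boundary to a $2$-spine instead of a triangulation, and certifies the final identification by a Magma isomorphism of group presentations rather than by SnapPy's verified routines.
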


 The general ideas of the proof as follows. First,  let $\Gamma$ be one of  $\Delta_{3,4,\infty;n}$, for $n=4$ or $6$. We construct a Ford domain $D$ for the discrete group $\Gamma$ and analysis the combinatorics of the faces of this domain. The ideal boundary
  $\partial_{\infty}D$ of $D$, that is  $\partial_{\infty}D=D \cap \partial \mathbf{H}^2_{\mathbb C}$, is crucial to get the manifold at infinity of the  group $\Gamma$. $\partial_{\infty}D$ is the complement of a topological solid cylinder in $\partial{\mathbf{H}^2_{\mathbb C}}-\{q_{\infty}\}$, the boundary of $\partial_{\infty}D$ is an infinite annulus with  polyhedral structure induced by the  polyhedral structure of $D$.
   The combinatorial structure of $\partial_{\infty}D$ can be gathered from the boundary 2-faces of $\partial_{\infty}D$. Unlike the case of the spherical CR uniformization of the figure eight knot complement in \cite{DerauxF:2015}, the combinatorial structure of $\partial_{\infty}D$ is quite different from the Ford domain of the real hyperbolic structure of the figure eight knot complement.
In fact, the combinatorial structure of $\partial_{\infty}D$ for the discrete group $\Delta_{3,4,\infty;n}(n=4,6)$ is more complicated than that in the case of $\Delta_{3,3,n;\infty}(n=4,5)$, since one isometric sphere will contribute two or more boundary 2-faces on $\partial_{\infty}D$.
Then we produce a 2-dimensional picture of the boundary of  $\partial_{\infty}D$, which determines a canonical 2-spine $S$ of our 3-manifold at infinity of ${\bf H}^2_{\mathbb C}/{\Gamma}$.
 After that, we will determine the topology of the manifold at infinity, which is a quotient space of $\partial_{\infty}D$.  From the combinatorial description of $\partial_{\infty}D$, we can calculate the fundamental group of the manifold.  The end result is that the manifolds at infinity will be identified with the hyperbolic 3-manifolds $m038$ and $s090$ respectively \cite{CullerDunfield:2014}.

\subsection{Discussion of related topics}

We observe that the combinatorial structure of the Ford domain of $\Delta_{3,4,\infty; \infty}$ is not too difficult. But the ideal boundary of
this Ford domain in Heisenberg group does not give a horotube structure as in \cite{Deraux:2016,ParkerWill:2016}. Due to the different topology structure, we use more sophisticated methods
to show that

\begin{thm}[Ma and Xie \cite{MaX:2020}]\label{thm:34inftyinfty} The manifold at infinity of the even subgroup of the complex triangle group $\Delta_{3,4,\infty;\infty}$ is the two-cusped hyperbolic 3-manifold $m295$ in the Snappy  Census.
\end{thm}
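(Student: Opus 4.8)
The plan is to run, for the parabolic-cusped group $\Gamma = \Delta_{3,4,\infty;\infty}$, the same machine that yields Theorem~\ref{thm:main}, taking advantage of the fact that its Ford domain is combinatorially simpler. First I would fix an explicit representation $\langle I_1, I_2, I_3\rangle \hookrightarrow \mathbf{PU}(2,1)$: the $(3,4,\infty)$ complex reflection triangle groups form a real one-parameter family, and imposing that $I_1I_3I_2I_3$ be parabolic pins down the parameter, so one writes down matrices for the $I_j$ in a convenient normal form. Both $I_1I_2$ (the $r=\infty$ vertex element) and $\sigma_\infty := I_1I_3I_2I_3$ are then parabolic; I would conjugate so that $\sigma_\infty$ fixes the point $q_\infty$ at infinity of the Siegel model, so that the stabilizer $\Gamma'_\infty$ of $q_\infty$ in the even subgroup $\Gamma' = \langle I_1I_2, I_2I_3\rangle$ acts by Heisenberg isometries.

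Next I would construct the Ford domain $D$ of $\Gamma'$ based at $q_\infty$: the intersection of the exteriors of the isometric spheres of those $g \in \Gamma'$ with $g q_\infty \neq q_\infty$, cut down by a fundamental domain for $\Gamma'_\infty$. The concrete work is to find the short list of group elements whose isometric spheres actually bound $D$, compute how these spheres meet one another, read off the side-pairing maps, and then certify via the Poincar\'e polyhedron theorem that $D$ is a genuine fundamental domain --- which re-establishes discreteness of $\Gamma$ as a byproduct. As the authors indicate, one expects this bounding list to be short and each isometric sphere to contribute a single $2$-face to the ideal boundary, in contrast to the $n = 4, 6$ situations.

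I would then pass to the ideal boundary $\partial_\infty D = D \cap \partial\mathbf{H}^2_{\mathbb C}$, which sits in the Heisenberg group $\partial\mathbf{H}^2_{\mathbb C}\setminus\{q_\infty\}$ as the complement of a topological solid cylinder, with boundary an infinite annulus carrying the cell structure induced by the faces of $D$. Producing an explicit two-dimensional picture of this boundary annulus and of the identifications forced by the side pairings gives a compact $2$-spine $S$ of the manifold at infinity $M = \Gamma'\backslash\Omega_{\Gamma'}$; from $S$ one extracts a presentation of $\pi_1(M)$ (equivalently via Poincar\'e's theorem or van Kampen), checks that $M$ is an open $3$-manifold whose ends are exactly the two $\Gamma'$-orbits of parabolic fixed points (those of $I_1I_2$ and of $\sigma_\infty$, giving the two cusps, whose cross-sections one reads from the Heisenberg geometry near those points), and matches $M$ with $m295$ by comparing this presentation together with the cusp and hyperbolic-invariant data against the Snappy census.

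The principal obstacle is the one the authors single out: because $\partial_\infty D$ is \emph{not} a horotube complement, no ready-made uniformization theorem of the horotube-surgery type used in \cite{Deraux:2016, ParkerWill:2016} applies, so the topological identification of $M$ cannot be outsourced. The heart of the argument is therefore the direct analysis of the quotient of $\partial_\infty D$ by its side pairings --- tracking exactly how the boundary annulus is folded up, which edges and vertices are glued, and ruling out any hidden identifications that would alter the homotopy type --- followed by a rigorous, rather than merely numerical, verification that the manifold so obtained is indeed $m295$.
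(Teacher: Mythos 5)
First, note that this paper does not actually prove Theorem~\ref{thm:34inftyinfty}: it is quoted from the companion paper \cite{MaX:2020}, so there is no in-text argument to compare against beyond the analogous proofs of Theorem~\ref{thm:main} in Sections~\ref{section:comb344}--6. Measured against those, your outline is the right strategy and essentially the same one: pin down the representation, build the Ford domain centered at a parabolic fixed point, certify it with the Poincar\'e polyhedron theorem, pass to the ideal boundary in the Heisenberg group, collapse to a $2$-spine, compute $\pi_1$, and match against the census. Your observation that no horotube-surgery theorem applies and that the quotient of $\partial_\infty D$ must be analyzed directly is exactly the point the authors themselves emphasize. One harmless deviation: you center the Ford domain at the fixed point of $I_1I_3I_2I_3=AB^2$ rather than of $A=I_1I_2$ as the paper does; either works, but the bookkeeping in \cite{MaX:2020} follows the latter convention.

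The genuine gap is that the proposal is a programme, not a proof. For a statement of this kind the entire mathematical content lives in the steps you defer: the explicit matrices, the finite list of group elements whose isometric spheres actually bound the domain, the verification that no further isometric sphere cuts into it, the ridge cycles for the Poincar\'e theorem, and the edge/vertex identifications on the boundary annulus. None of these can be predicted in advance (indeed your guess that each isometric sphere contributes a single $2$-face to $\partial_\infty D$ is exactly the kind of thing that fails in the $n=4,6$ cases, where one sphere contributes two or more faces), so nothing is established until they are carried out. A second, smaller gap: identifying $M$ with $m295$ from a presentation of $\pi_1(M)$ alone is not automatic for an open $3$-manifold; one must also control the peripheral structure (that $M$ is the interior of a compact manifold with two torus boundary components and that the isomorphism respects the cusps), after which Mostow--Waldhausen rigidity closes the argument. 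You gesture at ``cusp and hyperbolic-invariant data,'' but this step needs to be made explicit for the identification to be rigorous.
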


\begin{thm}[Ma and Xie \cite{MaX:2020}]\label{thm:3inftyinftyinfty}The manifold at infinity of the even subgroup of the complex triangle group $\Delta_{3,\infty,\infty;\infty}$ is the simplest chain link with three components in $S^3$.
\end{thm}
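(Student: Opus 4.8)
\textbf{Proof proposal for Theorem \ref{thm:3inftyinftyinfty}.}

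The plan is to run the Ford-domain program for $\Gamma=\Delta_{3,\infty,\infty;\infty}$ in parallel with the argument outlined for Theorem \ref{thm:main}, the $\Delta_{3,4,\infty;\infty}$ case of Theorem \ref{thm:34inftyinfty} being handled by the same machinery and serving as a consistency check. First I would fix an explicit complex-reflection representation $I_1,I_2,I_3\in\mathbf{PU}(2,1)$ of $\Delta_{3,\infty,\infty;\infty}$: the $I_j$ are order-two complex reflections with $I_2I_3$ of order $3$ and $I_3I_1$, $I_1I_2$, $I_1I_3I_2I_3$ all parabolic. Since $3\le\infty\le\infty$ the representation is rigid (a single point up to conjugacy), so there is no parameter to tune; discreteness can be taken from arithmeticity in the spirit of \cite{Thompson:2010}, or, more to the point here, it will fall out of the Poincar\'e polyhedron theorem once the fundamental domain is built. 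I would then pass to the even subgroup $\Gamma'=\langle I_1I_2,I_2I_3\rangle$ of index $2$, identify its conjugacy classes of parabolic subgroups (there should be three, matching the three components of the link at infinity), and normalize Heisenberg coordinates on $\partial\mathbf{H}^2_{\mathbb C}-\{q_\infty\}$ so that one parabolic fixed point $q_\infty$ is at infinity.

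Second, I would write down the Ford (Cygan) isometric spheres of the generators of $\Gamma'$ and of the finitely many short words needed to close up the domain, set $D$ to be the intersection of their exteriors with a fundamental region for the stabilizer $\Gamma'_{q_\infty}$, and verify by Poincar\'e's theorem that $D$ is a fundamental domain for $\Gamma'$: pair the facets by the side-pairing maps, check that every ridge cycle closes up with the predicted order, and confirm local finiteness and completeness. As remarked above, for $\Delta_{3,4,\infty;\infty}$ the combinatorics of this stage is comparatively mild; for $\Delta_{3,\infty,\infty;\infty}$ I expect a similar but somewhat larger list of isometric spheres, the extra parabolic directions accounting for the additional cusp ends.

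Third, I would extract the ideal boundary $\partial_\infty D=D\cap\partial\mathbf{H}^2_{\mathbb C}$. Because $\Gamma'$ has several cusps, $\partial_\infty D$ is the complement in Heisenberg space of a union of horotube-like solid regions, and its frontier carries a polyhedral cell structure induced by the visible isometric spheres. This is exactly where the remark in the paper bites: the cusp cross-sections are not standard horotori, so Schwartz's horotube-surgery shortcuts are unavailable, and one must instead produce the explicit two-dimensional cell-complex picture of this frontier, record how the side-pairings act on its cells, and collapse it to a canonical two-spine $S$ of the manifold at infinity $M=\Gamma'\backslash\Omega_{\Gamma'}$.

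Finally, from $S$ I would read off a presentation of $\pi_1(M)$ together with its three peripheral subgroups, confirm that $M$ is a three-cusped hyperbolic $3$-manifold, and identify it with the complement of the simplest three-component chain link in $S^3$ — for instance by exhibiting a homeomorphism of spines, or by matching the fundamental-group presentation and cusp shapes against the census and cross-checking the hyperbolic volume and the symmetry group. The main obstacle throughout is the middle stage: proving rigorously that the candidate polyhedron really is the Ford domain (controlling the Cygan-sphere intersection pattern and excluding hidden faces) in the regime where the standard horotube machinery does not apply, so that the verification must be carried out directly.
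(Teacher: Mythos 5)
First, a point of order: Theorem \ref{thm:3inftyinftyinfty} is not proved in this paper at all --- it is quoted from the companion paper \cite{MaX:2020}, and the present paper only carries out the programme in full for $\Delta_{3,4,4;\infty}$ and $\Delta_{3,4,6;\infty}$. That said, your outline does describe the method actually used there and in Sections 4--6 here: explicit matrices, a candidate Ford domain certified by the Poincar\'e polyhedron theorem, the ideal boundary $\partial_\infty D$ with its induced cell structure, collapse to a canonical $2$-spine, and identification of the fundamental group against the census. So the strategy is the right one.

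The gap is that, as written, this is a research programme rather than a proof: every step carrying mathematical content is deferred. In this genre the proof \emph{is} the computation --- the finite list of essential words whose isometric spheres bound the domain, the verification that no further isometric spheres cut it, the ridge cycles and their cycle relations, the combinatorics of the spinal spheres on $\partial_\infty D$, and the explicit presentation of $\pi_1$ --- and none of this is supplied or even conjectured concretely. Two specific points deserve attention. First, your rigidity claim is misattributed: the space of $(3,\infty,\infty)$ complex reflection triangle groups is one-dimensional (as the paper recalls for $3\le p\le q\le r$); it is the extra condition that $I_1I_3I_2I_3$ be parabolic (the ``$;\infty$'') that selects the representation, not the inequality $3\le\infty\le\infty$. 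Second, and more seriously, the three-cusped target changes the endgame. In the cases treated here, Proposition \ref{prop: 346para} (no parabolic fixed points on $\partial_\infty D$ other than $q_\infty$) is what allows $\partial_\infty D$ to be a clean solid-cylinder complement that deformation-retracts, $A$-equivariantly, onto a compact spine. For $\Delta_{3,\infty,\infty;\infty}$ there must be additional parabolic fixed points on the frontier of $\partial_\infty D$ (accounting for the conjugacy classes of $I_3I_1$ and $I_1I_3I_2I_3$), so one has to excise horospherical neighbourhoods, verify the cusp cross-sections are tori, and adapt the spine construction accordingly. This is exactly the ``more sophisticated methods'' the paper alludes to before stating Theorems \ref{thm:34inftyinfty} and \ref{thm:3inftyinftyinfty}, and your sketch does not engage with it beyond naming the expected number of cusps.
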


The complement of the  simplest chain link with three components in $S^3$ is  the so called "magic" 3-manifold, see
\cite{MartelliP:2006}.

 It is very possible to show that infinitely many hyperbolic 3-manifolds via Dehn surgeries on   the first cusp of the hyperbolic 3-manifold $m295$  admit spherical CR uniformizations, by   applying M. Acosta's CR Dehn surgery theory \cite{Acosta:2019} and deforming the Ford domain of $\Delta_{3,4,\infty; \infty}$ in a one parameter family.

\begin{conj}\label{conj:34ninfty}The manifold at infinity of the even subgroup of the complex triangle group $\Delta_{3,4,\infty;n}$ is the hyperbolic 3-manifold obtained via the Dehn surgery of
 $m295$ on the first cusp with slope $n-2$.
\end{conj}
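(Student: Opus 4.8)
The plan is to derive the conjecture from Theorem~\ref{thm:34inftyinfty} by a one-parameter deformation, using M.~Acosta's spherical CR Dehn surgery theory~\cite{Acosta:2019}. The space of $(3,4,\infty)$ complex reflection triangle groups is one real dimensional; I would parameterize it so that $\Delta_{3,4,\infty;\infty}$ is the point at which $I_1I_3I_2I_3$ is $\mathbf{PU}(2,1)$-parabolic and $\Delta_{3,4,\infty;n}$ the point at which $I_1I_3I_2I_3$ is elliptic of order $n$, with $n\to\infty$ approaching the parabolic representation. By Theorem~\ref{thm:34inftyinfty}, at the parabolic representation the even subgroup carries a complete uniformizable spherical CR structure whose manifold at infinity is the two-cusped census manifold $m295$; one of its two cusps --- the \emph{first} cusp in the ordering of~\cite{CullerDunfield:2014} --- is the one cut out by the parabolic $I_1I_3I_2I_3$, while the other survives the whole deformation.

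The first step is to check that the hypotheses of the CR surgery theorem hold at $\Delta_{3,4,\infty;\infty}$: the first cusp should carry an embedded horotube neighbourhood, and the deformation should be transverse to the parabolicity locus in the sense that the relevant eigenvalue of $I_1I_3I_2I_3$ leaves the unit circle with nonzero speed. Granting this, Acosta's theorem gives, for all sufficiently large $n$, discreteness of $\Delta_{3,4,\infty;n}$ together with the conclusion that the manifold at infinity of the even subgroup is obtained by Dehn filling the first cusp of $m295$ along a slope that depends linearly on the surgery parameter.

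The second step is to identify the slope as $n-2$ and to remove the ``sufficiently large'' restriction. For the identification I would combine the asymptotics $n\to\infty$ --- which force the slope to converge to the meridian of the (unfilled) first cusp, hence to have the form $\alpha n+\beta$ with $\alpha\neq 0$ in the meridian--longitude framing chosen in this paper --- with the two explicit values furnished by Theorem~\ref{thm:main}: $m038$ for $n=4$ and $s090$ for $n=6$, which are the Dehn fillings of the first cusp of $m295$ along slopes $2$ and $4$ respectively. These data points force $\alpha=1$ and $\beta=-2$. To cover the remaining values of $n$ I would carry out the Ford-domain construction of the present paper uniformly in $n$: the isometric spheres that occur in the face-cycle around the fixed point of $I_1I_3I_2I_3$ should organize into a single combinatorial block whose only $n$-dependence is how many times it wraps around, so that a direct count of the $2$-faces of $\partial_{\infty}D$ yields the $(n-2)$-filling description for every $n\geq 4$.

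I expect the uniform combinatorial analysis in the second step to be the main obstacle. As already seen in the proof of Theorem~\ref{thm:main}, one isometric sphere may contribute several $2$-faces of $\partial_{\infty}D$, so the combinatorial type of the Ford domain is appreciably more intricate than for $\Delta_{3,3,n;\infty}$, and its dependence on $n$ must be controlled once and for all rather than case by case. A secondary but genuine difficulty is verifying the horotube and transversality hypotheses of~\cite{Acosta:2019} at $\Delta_{3,4,\infty;\infty}$: \cite{MaX:2020} identifies the topology of the manifold at infinity but not, by itself, the finer behaviour near the degenerate cusp that Acosta's machinery requires.
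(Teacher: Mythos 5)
You should first note that the statement you are proving is stated in the paper only as Conjecture~\ref{conj:34ninfty}; the authors give no proof of it, and the route you outline --- deform $\Delta_{3,4,\infty;\infty}$ in the one-parameter family and apply Acosta's spherical CR Dehn surgery theorem \cite{Acosta:2019}, anchored by Theorem~\ref{thm:34inftyinfty} and the two data points of Theorem~\ref{thm:main} --- is precisely the strategy the authors themselves sketch in the paragraph preceding the conjecture. So your proposal is a faithful elaboration of the intended approach, but it is not a proof, and you are right to flag where the gaps are. Two of them are genuine obstructions rather than routine verifications. First, the horotube hypothesis: the paper explicitly observes that the ideal boundary of the Ford domain of $\Delta_{3,4,\infty;\infty}$ in the Heisenberg group \emph{does not} carry a horotube structure of the kind used in \cite{Deraux:2016,ParkerWill:2016}, which is why \cite{MaX:2020} needed ``more sophisticated methods'' to identify the manifold at infinity as $m295$. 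Whether the cusp cut out by the parabolic $I_1I_3I_2I_3$ (as opposed to the cusp at $q_\infty$) admits the embedded horotube that Acosta's theorem requires is exactly the kind of finer geometric information that the topological identification of $m295$ does not supply; until this is established, the surgery theorem cannot be invoked at all.

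Second, even granting Acosta's theorem, it only yields the conclusion for all \emph{sufficiently large} $n$, with no effective bound, so the small values $n=4,5,6,7,\dots$ are not covered by the deformation argument; the paper handles only $n=4$ and $n=6$, each by a separate and rather heavy Ford-domain computation (Sections~\ref{section:comb344}--6), and the combinatorics visibly changes between the two cases (e.g.\ the word set $S^*$ acquires the extra generators $BaBaB$, $bAbAb$ for $n=6$, and the spinal sphere of $B$ contributes three boundary faces instead of one). Your proposed ``uniform combinatorial block that wraps around $n$ times'' is therefore not something one can read off from the two worked examples; it is an open combinatorial claim, and the paper offers no evidence for it beyond the pattern $N(2,n-2)$ matching at $n=4$ and $n=6$. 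Your slope identification ($\alpha=1$, $\beta=-2$ from linearity plus two data points) is fine \emph{conditional} on Acosta's theorem applying and on the slope being affine in $n$ in the paper's framing, but both conditionals remain unverified. In short: the strategy is the right one and matches the authors' intent, but as written this establishes nothing beyond what Theorems~\ref{thm:main} and \ref{thm:34inftyinfty} already give, and the statement remains a conjecture.
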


We now give some remarks on Theorems \ref{thm:34inftyinfty}, \ref{thm:3inftyinftyinfty}, and Conjecture \ref{conj:34ninfty}. Let $N$ be the simplest chain link with three components in $S^3$ \cite{MartelliP:2006}, which is a hyperbolic link. We use the meridian-longitude systems of the cusps of $N$ as in \cite{MartelliP:2006}, which is different from  the meridian-longitude systems  in Snappy.   Martelli-Petronio classified all the non-hyperbolic Dehn fillings of $N$ \cite{MartelliP:2006}, and got lots of information about hyperbolic Dehn fillings of $N$. Since $N$ has three cusps, we denote by $N(\alpha)$ the filled 3-manifold with two-cusps where the filling slope is $\alpha$ on the first cusp of $N$. Similarly, $N(\alpha, \beta)$ is a one-cusped 3-manifold with filling slopes  $\alpha$ and $\beta$ on the first two cusps of $N$, and $N(\alpha, \beta, \gamma)$ is a closed 3-manifold.

 1. The Dehn filling 3-manifold $N(1)$ is the Whitehead link complement in $S^3$ \cite{MartelliP:2006}, now $N(1)$ has two cusps, which is the manifold at infinity of $\Delta_{3,3,\infty; \infty}$ \cite{ParkerWill:2016}. The manifold $N(1,2)$ is the figure eight  knot complement in the 3-sphere \cite{MartelliP:2006},  which is the manifold at infinity of the index two even subgroup of $\Delta_{3,3,4; \infty}$ \cite{DerauxF:2015}, and  the manifold $N(1,2, -2)$ is the Seifert 3-manifold $(S^2,(3,1),(3,1),(4,1),-1)$. Moreover, $N(1,n-2)$ is a hyperbolic 3-manifold with one cusp for $n \geq 4$,  which is the manifold at infinity of the index two even subgroup of  $\Delta_{3,3,n; \infty}$ \cite{Acosta:2019}.   The manifold $N(1,n-2, -2)$ is the Seifert 3-manifold $\left(S^2,(3,1),(3,1),(n,1),-1\right)$.

2. The manifold $N(2)$ is $m295$ in the Snappy  Census \cite{CullerDunfield:2014}, which is also the link complement $9^2_{50}$ in Rolfsen's list  \cite{Rolfsen}.  The filled 3-manifold $N(2,2)$ is the manifold $m038$, which is the manifold at infinity  of the index two even subgroup of  $\Delta_{3,4,4; \infty}$ as in Theorem \ref{thm:main}. The manifold  $N(2,2,-2)$ is the Seifert 3-manifold $\left(S^2,(3,1),(4,1),(4,1),-1\right)$.
 The filled 3-manifold $N(2,4)$ is the manifold $s090$, which is the manifold at infinity  of the index two even subgroup of  $\Delta_{3,4,6; \infty}$ as in Theorem \ref{thm:main}. We also have  the manifold $N(2,4,-2)$ is the Seifert 3-manifold $\left(S^2,(3,1),(4,1),(6,1),-1\right)$.

 3. From above, it is  natural to  propose the Conjecture  \ref{conj:34ninfty}. Moreover, it should be true that the manifold at infinity of the index two even subgroup of $\Delta_{3,n,\infty; \infty}$ is the two-cusped 3-manifold $N(n-2)$, and the manifold at infinity of the index two even subgroup $\Delta_{3,n,m; \infty}$  is the one-cusped 3-manifold $N(n-2, m-2)$ if Schwartz's conjecture is true.

The paper is organized as follows. In Section 2  we give well known background
material. Section 3 contains the matrix representation of the complex hyperbolic triangle group $\Delta_{3,4,n;\infty}$  in $\mathbf{SU}(2,1)$ for $n=4,6$. Section 4 is devoted to the description of the isometric spheres that bound the Ford domain for the complex hyperbolic triangle group $\Delta_{3,4,4;\infty}$. We also give some examples of calculation in this section. In Section 5, we study combinatorial structure of the ideal boundary of the Ford domain of  the complex hyperbolic triangle group $\Delta_{3,4,4;\infty}$ and get the hyperbolic 3-manifold at infinity.  In Section 6, we study the geometry of  the complex hyperbolic triangle group $\Delta_{3,4,6;\infty}$  similarly but omit some details of the computation.

\section{Background}\label{sec-back}
In this section, we present some preliminaries  on complex hyperbolic geometry.
For more details, see \cite{Go}.

\subsection{Complex hyperbolic plane and Siegel domain}
Let ${\mathbb C}^{2,1}$  denote the vector space ${\mathbb C}^{3}$ equipped with the Hermitian
form
$$\langle {\bf{z}}, {\bf{w}} \rangle =z_1 \overline{w}_3+z_2 \overline{w}_2+z_3 \overline{w}_1$$
of signature $(2,1)$, where ${\bf z}=(z_1,z_2,z_3)^T$ and ${\bf w}=(w_1,w_2,w_3)^T$ are vectors in ${\mathbb C}^3$.
The Hermitian form divides ${\mathbb C}^{2,1}$ into three parts $V_{-}, V_{0}$ and $V_{+}$, which are

\begin{eqnarray*}
  V_{-} &=& \{{\bf z}\in {\mathbb C}^3-\{0\} : \langle {\bf z}, {\bf z} \rangle <0 \}, \\
  V_{0} &=& \{{\bf z}\in {\mathbb C}^3-\{0\} : \langle {\bf z}, {\bf z} \rangle =0 \}, \\
  V_{+} &=& \{{\bf z}\in {\mathbb C}^3-\{0\} : \langle {\bf z}, {\bf z} \rangle >0 \}.
\end{eqnarray*}

Let $$ \mathbb{P}: {\mathbb C}^{3}-\{0\}\rightarrow {\mathbb C}{\mathbb P}^2$$ be  the canonical projection onto the  complex projective space.
Then the {\it complex hyperbolic plane} ${\bf H}^2_{\mathbb C}$ is the image of $V_{-}$ in ${\mathbb C}{\mathbb P}^2$
by the  map ${\mathbb P}$  and its {\it ideal boundary}, or {\it  boundary at infinity}, is  the image of $V_{0}$ in
 ${\mathbb C}{\mathbb P}^2$, we denote it by $\partial {\bf H}^2_{\mathbb C}$.

The standard lift $(z_1,z_2,1)^T$ of $z=(z_1,z_2)\in \mathbb {C}^{2}$ is negative if and only if
$$z_1+|z_2|^2+\overline{z}_1=2{\rm Re}(z_1)+|z_2|^2<0.$$  Thus $\mathbb{P}(V_{-})$ is a paraboloid in ${\mathbb C}^{2}$, called the {\it Siegel domain}.
Its boundary $\mathbb{P}(V_{0})$ satisfies
$$2{\rm Re}(z_1)+|z_2|^2=0.$$

The complex hyperbolic space is parameterized in {\it horospherical coordinates}  by $\mathbb{C}\times \mathbb{R}\times \mathbb{R}^{+}$:
$$(z,t,u)\rightarrow \left(\begin{matrix}\frac{-|z|^2-u+it}{2}\\ z\\1\end{matrix}\right).$$ The standard lift of the point at infinity is
$$q_{\infty}=\left(\begin{matrix}1\\ 0\\0\end{matrix}\right).$$  Then $\mathbb{P}(V_{0})=\{\mathbb{C}\times \mathbb{R}\times \{0\}\}\cup \{q_{\infty}\}$.
Therefore, the Siegel domain has an  analogue construction of the upper half space model for the  real hyperbolic space $H_{\mathbb{R}}^{n}$.

If
$$\mathbf{p}=\left(\begin{matrix}
p_1\\ p_2\\p_3\end{matrix}\right),\quad \mathbf{q}=\left(\begin{matrix}
q_1\\ q_2\\q_3\end{matrix}\right)$$ are lifts of $p,q$ in $\mathbf{H}^2_{\mathbb C}$, then the {\it Hermitian cross product} of $p$ and $q$  is defined by

$$\mathbf{p}\boxtimes \mathbf{q}=\left(\begin{matrix}
\overline{p_1q_2-p_2q_1}\\ \overline{p_3q_1-p_1q_3}\\ \overline{p_2q_3-p_3q_2}\end{matrix}\right).$$
This vector is orthogonal to $\mathbf{p}$ and $\mathbf{q}$  with  respect to the Hermitian form  $\langle \cdot,\cdot\rangle$.
It is a Hermitian version of the Euclidean cross product.

The {\it Bergman metric} $\rho$ on ${\bf H}^2_{\mathbb C}$ is given by the following
$$
  \cosh^2 \left(\frac{\rho(z,w)}{2}\right) = \frac{\langle \mathbf{z}, \mathbf{w} \rangle \langle \mathbf{w},
   \mathbf{z} \rangle}{\langle \mathbf{z}, \mathbf{z} \rangle \langle \mathbf{w}, \mathbf{w} \rangle},
$$
where $ \mathbf{z}, \mathbf{w}\in V_{-} $ are the lifts of $z,w$ respectively. Note that this definition is
 independent of the choices of lifts.

\subsection{The isometries} The complex hyperbolic plane is a K\"{a}hler manifold of constant holomorphic sectional curvature $-1$.
We denote by $\mathbf{U}(2,1)$ the Lie group of $\langle \cdot,\cdot\rangle$ preserving complex linear
 transformations and by $\mathbf{PU}(2,1)$ the group modulo scalar matrices. The group of holomorphic
  isometries of ${\bf H}^2_{\mathbb C}$ is exactly $\mathbf{PU}(2,1)$. It is sometimes convenient to work with
 $\mathbf{SU}(2,1)$, which is a 3-fold cover of  $\mathbf{PU}(2,1)$.

The full isometry group of ${\bf H}^2_{\mathbb C}$ is given by
$$\widehat{\mathbf{PU}(2,1)}=\langle \mathbf{PU}(2,1),\iota\rangle,$$
where $\iota$ is given on the level of homogeneous coordinates by complex conjugate ${\bf z}=(z_1,z_2,z_3)^T  \mapsto {\bf \overline{z}}=(\overline{z}_1,\overline{z}_2,\overline{z}_3)^T$

Elements of $\mathbf{SU}(2,1)$ fall into three types, according to the number and types of the fixed points of the corresponding
isometry. Namely, an isometry is {\it loxodromic} (resp. {\it parabolic}) if it has exactly two fixed points (resp. one fixed point)
on $\partial {\bf H}^2_{\mathbb C}$. It is called {\it elliptic}  when it has (at least) one fixed point inside ${\bf H}^2_{\mathbb C}$.
 An elliptic $A\in \mathbf{SU}(2,1)$ is called {\it regular elliptic} whenever it has three distinct eigenvalues, and {\it special elliptic} if
 it has a repeated eigenvalue.

Suppose that  a non-identity element $T\in \mathbf{SU}(2,1)$ has trace equal to 3.  Then all eigenvalues of $T$ equal 1, that is $T$ is {\it unipotent}.
If $A\in\mathbf{SU}(2,1)$ fixes $q_{\infty}$, then it is upper triangular. We now examine the subgroup of  $\mathbf{SU}(2,1)$ fixing
$q_{\infty}$. Consider the map $T$ from $\partial {\bf H}^2_{\mathbb C}-\{q_{\infty}\}=\mathbb{C}\times \mathbb{R}$ to $\mathbf{GL}(3,\mathbb{C})$ given by
$$T_{(z,t)}=\left(\begin{matrix}
1 & -\overline{z}& \frac{-|z|^{2}+it}{2} \\ 0 & 1 & z \\ 0 & 0 & 1 \end{matrix}\right).$$
It is easy to find that this map fixes $q_{\infty}$ and sends the origin in $\mathbb{C}\times \mathbb{R}$ to the point $(z,t)$.

Moreover, composition of such elements gives $\partial {\bf H}^2_{\mathbb C}-\{q_{\infty}\}$ the structure of the Heisenberg group
$$(z_1,t_1)\cdot(z_2,t_2)=\left(z_1+z_2,t_1+t_2+2{\rm Im}(z_1\overline{z}_2)\right)$$ and $T_{(z,t)}$ acts as left {\it Heisenberg translation}
on $\partial {\bf H}^2_{\mathbb C}-\{q_{\infty}\}$. A Heisenberg translation by $(0,t)$ is called a {\it vertical translation} by $t$.

The full stabilizer of $q_{\infty}$ is generated by the above unipotent group, together with the isometries of the forms
\begin{equation}
\left(\begin{matrix}
1 & 0& 0 \\ 0 & e^{i\theta} & 0 \\ 0 & 0 & 1 \end{matrix}\right) \quad  and   \quad
\left(\begin{matrix}
\lambda & 0 & 0 \\ 0 & 1 & 0 \\ 0 & 0 & 1/\lambda \end{matrix}\right),
\end{equation}
where $\theta,\lambda\in \mathbb{R}$ and $\lambda \neq 0$.  The first acts on $\partial {\bf H}^2_{\mathbb C}-\{q_{\infty}\}=\mathbb{C}\times \mathbb{R}$ as a rotation
with vertical axis:
$$(z,t)\mapsto (e^{i\theta}z,t),$$  whereas the second one acts as $$(z,t)\mapsto (\lambda z,\lambda^2 t).$$

The Heisenberg norm of $(z,t)\in \partial {\bf H}^2_{\mathbb C}-\{q_{\infty}\}$ is given by
 $$\parallel(z,t)\parallel=\left||z|^2+it\right|^{1/2}.$$
This gives rise to a metric, the {\it Cygan metric}, on Heisenberg group by
$$d((z_1,t_1),(z_2,t_2))=\parallel (z_1,t_1)^{-1}\cdot(z_2,t_2)\parallel.$$
The {\it Cygan sphere} with center $(z_0,t_0)$  and radius $r$ has equation
$$d\left((z,t),(z_0,t_0)\right)=\left||z-z_0|^{2}+i(t-t_0+2{\rm Im}(z\overline{z}_0))\right|=r^2.$$
The Cygan metric can be extended to the metric to points $p$ and $q$ in ${\bf H}^2_{\mathbb C}$  with horospherical coordinates $(z_1,t_1,u_1)$
and $(z_2,t_2,u_2)$ by writing
$$d\left(p,q\right)=\left||z_1-z_2|^{2}+|u_1-u_2|+i(t_1-t_2+2{\rm Im}(z_1\overline{z}_2))\right|^{1/2}.$$

\subsection{Totally geodesic submanifolds and complex reflections}
There are two kinds of totally geodesic submanifolds of real dimension 2 in ${\bf H}^2_{\mathbb C}$: {\it complex lines} in ${\bf H}^2_{\mathbb C}$ are
complex geodesics (represented by ${\bf H}^1_{\mathbb C}\subset {\bf H}^2_{\mathbb C}$) and {\it Lagrangian planes} in ${\bf H}^2_{\mathbb C}$ are totally
real geodesic 2-planes (represented by ${\bf H}^2_{\mathbb R}\subset {\bf H}^2_{\mathbb C}$). Since the Riemannian sectional curvature of the  complex hyperbolic space
 is nonconstant, there are no totally geodesic hyperplanes.

Consider the complex hyperbolic space ${\bf H}^2_{\mathbb C}$  and its boundary $\partial{\bf H}^2_{\mathbb C}$. We define
\emph{$\mathbb{C}$-circles} in $\partial{\bf H}^2_{\mathbb C}$ to be the boundaries of complex geodesics in ${\bf H}^2_{\mathbb C}$. Analogously,
We define \emph{$\mathbb{R}$-circles} in $\partial{\bf H}^2_{\mathbb C}$ to be the boundaries of Lagrangian planes in ${\bf H}^2_{\mathbb C}$.

Let $L$ be a complex line and $\partial L$ be its trace on boundary $\partial{\bf H}^2_{\mathbb C}$. A {\it polar vector} of $L$ (or $\partial L$) is the unique vector (up to scalar multiplication) perpendicular to this complex line with
respect to the Hermitian form. A polar vector  belongs to  $V_{+}$ and each vector in $V_{+}$ corresponds to a complex line or a $\mathbb{C}$-circle.
Moreover, let $L$ be a complex line with polar vector
${\bf c}\in V_{+}$,
then the {\it complex reflection} fixing $L$ is given by
$$
  I_{\bf c}({\bf z}) = -{\bf z}+2\frac{\langle {\bf z}, {\bf c} \rangle}{\langle {\bf c},{\bf c}\rangle}{\bf c}.
$$

In the Heisenberg model, $\mathbb{C}$-circles are either vertical lines or ellipses, whose projection on the $z$-plane are circles.
 Finite $\mathbb{C}$-circles are determined by a center and a radius. They may also be described using polar vectors. A finite $\mathbb{C}$-circles
 with center $(x+yi,z) \in \mathbb{C} \times \mathbb{R}$ and radius $r$ has polar vector
 $$\left(\begin{matrix} \frac{r^2-x^2-y^2+iz}{2}\\ x+y i \\ 1 \end{matrix}\right).$$

\subsection{Bisectors and spinal coordinates}
In order to analyze  2-faces of a Ford polyhedron, we must study the intersections of isometric spheres.
Isometric spheres are special example of bisectors. In this subsection, we will describe a convenient set of
coordinates for bisector intersections, deduced from slice decomposition.
\begin{defn} Given two distinct points $p_0$  and $p_1$ in ${\bf H}^2_{\mathbb C}$ with the same norm (e.g. one could
take $\langle \mathbf{p}_0,\mathbf{p}_0\rangle=\langle \mathbf{p}_1,\mathbf{p}_1\rangle= -1$), the \emph{bisector} $\mathcal{B}(p_0,p_1)$ is the projectivization of the set of negative vectors
$x$ with
$$|\langle x,\mathbf{p_0}\rangle|=|\langle x,\mathbf{p_1}\rangle|.$$
\end{defn}

The  {\it spinal sphere} of the bisector $\mathcal{B}(p_0,p_1)$ is the intersection of $\partial {\bf H}^2_{\mathbb C}$ with the closure of $\mathcal{B}(p_0,p_1)$ in $\overline{{\bf H}^2_{\mathbb C}}= {\bf H}^2_{\mathbb C}\cup \partial { {\bf H}^2_{\mathbb C}}$. The bisector $\mathcal{B}(p_0,p_1)$ is a topological 3-ball, and its spinal sphere is a 2-sphere.
The  {\it complex spine} of $\mathcal{B}(p_0,p_1)$ is the complex line through the two points $p_0$ and $p_1$. The {\it real spine} of $\mathcal{B}(p_0,p_1)$
is the intersection of the complex spine with the bisector itself, which is a (real) geodesic; it is the locus of points inside the complex spine which are equidistant from $p_0$ and $p_1$.
Bisectors are not totally geodesic, but they have a very nice foliation by two different families of totally geodesic submanifolds. Mostow \cite{Mostow:1980}  showed that a bisector is the preimage of the real
spine under the orthogonal projection onto the complex spine. The fibres of this projection are complex lines called the {\it complex slices} of the bisector. Goldman \cite{Go} showed that a bisector is
the union of all  Lagrangian planes containing the real spine. Such Lagrangian planes are called the {\it real slices} of the bisector.

From the detailed analysis in \cite{Go}, we know that the intersection of two bisectors is usually not totally geodesic and can be somewhat complicated. In  this paper, we shall only consider the intersection of
coequidistant bisectors, i.e. bisectors equidistant from a common point.   When $p,q$ and $r$ are not in a common complex line, that is,  their lifts are linearly independent in $\mathbb {C}^{2,1}$, then the locus $\mathcal{B}(p,q,r)$ of points in $ {\bf H}^2_{\mathbb C}$
equidistant to  $p,q$ and $r$ is a smooth disk that is not totally geodesic, and is often called a \emph{Giraud disk}.  The following property is crucial when studying fundamental domain.

\begin{prop}[Giraud]
 If $p,q$ and $r$ are not in a common complex line, then $\mathcal{B}(p,q,r)$ is contained in precisely three bisectors, namely $\mathcal{B}(p,q),  \mathcal{B}(q,r)$ and  $\mathcal{B}(p,r)$.
\end{prop}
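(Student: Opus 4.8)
The plan is to realize $\mathcal{B}(p,q,r)$ by an explicit two-parameter family and then convert the condition "is contained in a bisector" into a system of trigonometric identities. Normalize the lifts so that $\langle\mathbf{p},\mathbf{p}\rangle=\langle\mathbf{q},\mathbf{q}\rangle=\langle\mathbf{r},\mathbf{r}\rangle=-1$; since $p,q,r$ are not on a common complex line, $\{\mathbf{p},\mathbf{q},\mathbf{r}\}$ is a basis of $\mathbb{C}^{2,1}$. If $x$ is a negative vector with $|\langle x,\mathbf{p}\rangle|=|\langle x,\mathbf{q}\rangle|=|\langle x,\mathbf{r}\rangle|$, this common value is nonzero (otherwise $x$ would be orthogonal to a basis), so after rescaling we may take $\langle x,\mathbf{p}\rangle=1$, $\langle x,\mathbf{q}\rangle=e^{i\beta}$, $\langle x,\mathbf{r}\rangle=e^{i\gamma}$ with $\beta,\gamma$ uniquely determined mod $2\pi$; writing $x=x(\beta,\gamma)$ in the basis dual to $\{\mathbf{p},\mathbf{q},\mathbf{r}\}$, the inequality $\langle x,x\rangle<0$ carves out a nonempty open set $U$ in the $(\beta,\gamma)$-torus, and $(\beta,\gamma)\mapsto x(\beta,\gamma)$ is a bijection of $U$ onto $\mathcal{B}(p,q,r)$. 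For a point $a$ with normalized lift $\mathbf{a}=a_1\mathbf{p}+a_2\mathbf{q}+a_3\mathbf{r}$ a direct computation gives $\langle x(\beta,\gamma),\mathbf{a}\rangle=A+Be^{i\beta}+Ce^{i\gamma}$ with $(A,B,C)=(\overline{a_1},\overline{a_2},\overline{a_3})$, so $a\mapsto(A,B,C)$ is a conjugate-linear isomorphism sending $p,q,r$ to the standard basis vectors; writing $L_{pq},L_{qr},L_{pr}$ for the complex lines through the indicated pairs, $a$ lies on $L_{qr},L_{pr},L_{pq}$ precisely when $A=0,\ B=0,\ C=0$ respectively.

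Suppose now $\mathcal{B}(a,b)\supseteq\mathcal{B}(p,q,r)$ with $a\neq b$, and let $(A,B,C)$ and $(A',B',C')$ be the data of $a$ and $b$. Then $|A+Be^{i\beta}+Ce^{i\gamma}|^2=|A'+B'e^{i\beta}+C'e^{i\gamma}|^2$ for all $(\beta,\gamma)\in U$; both sides are trigonometric polynomials, hence real-analytic on the torus, so equality holds everywhere, and comparing the coefficients of the characters $1,e^{\pm i\beta},e^{\pm i\gamma},e^{\pm i(\beta-\gamma)}$ yields
\begin{equation*}
|A|^2+|B|^2+|C|^2=|A'|^2+|B'|^2+|C'|^2,\qquad \overline{A}B=\overline{A'}B',\qquad \overline{A}C=\overline{A'}C',\qquad \overline{B}C=\overline{B'}C'.
\end{equation*}
If $A,B,C$ are all nonzero, combining the last three relations (multiply the second by the conjugate of the third and substitute the conjugate of the fourth) gives $|A|^2(B\overline{C})=|A'|^2(B\overline{C})$, hence $|A|=|A'|$, and symmetrically $|B|=|B'|$, $|C|=|C'|$; the product relations then force $(A',B',C')=e^{i\mu}(A,B,C)$, i.e. $b=a$, a contradiction. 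Hence $a$, and by the same argument $b$, lies on $L_{pq}\cup L_{qr}\cup L_{pr}$; a short inspection of the finitely many remaining cases then shows that the relations force either $\{a,b\}\subseteq\{p,q,r\}$ — so that $\mathcal{B}(a,b)$ is one of $\mathcal{B}(p,q),\mathcal{B}(q,r),\mathcal{B}(p,r)$ — or else $a$ and $b$ lie on one and the same of the three lines, say $L_{pq}$, which is then the complex spine of $\mathcal{B}(a,b)$.

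It remains to show that a bisector with complex spine $L_{pq}$ containing $\mathcal{B}(p,q,r)$ must equal $\mathcal{B}(p,q)$ (and likewise for $L_{qr},L_{pr}$). By Mostow's slice decomposition such a bisector is $\pi^{-1}(\sigma)$ for a geodesic $\sigma\subseteq L_{pq}$, where $\pi$ is orthogonal projection onto $L_{pq}$; it contains $\mathcal{B}(p,q,r)$ iff $\pi(\mathcal{B}(p,q,r))\subseteq\sigma$, and since $\mathcal{B}(p,q,r)\subseteq\mathcal{B}(p,q)=\pi^{-1}(\sigma_0)$ with $\sigma_0$ the real spine of $\mathcal{B}(p,q)$, we have $\pi(\mathcal{B}(p,q,r))\subseteq\sigma_0$; so it is enough to know that $\pi(\mathcal{B}(p,q,r))$ is not a single point, equivalently that $\mathcal{B}(p,q,r)$ is not contained in any complex line. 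If it were contained in a complex line $L_0$, then, being a smooth disk in a $2$-manifold, it would be open in $L_0$; since $\rho(\cdot,p),\rho(\cdot,q),\rho(\cdot,r)$ are real-analytic on $L_0$ and coincide on this open set, they coincide on all of $L_0$, so $L_0\subseteq\mathcal{B}(p,q)\cap\mathcal{B}(q,r)$; but then $p,q,r$ have a common orthogonal projection onto $L_0$ (the common minimizer of those three strictly convex functions), i.e. $p,q,r$ lie in a single fibre of $\pi_{L_0}$ — a complex line — contradicting the hypothesis. Finally, $\mathcal{B}(p,q),\mathcal{B}(q,r),\mathcal{B}(p,r)$ are pairwise distinct since their complex spines $L_{pq},L_{qr},L_{pr}$ are distinct, and each clearly contains $\mathcal{B}(p,q,r)$; hence there are exactly three. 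The main obstacle is this last step, namely pinning the bisector down from its complex spine via the lemma that $\mathcal{B}(p,q,r)$ lies in no complex line, together with the bookkeeping in the trigonometric system; the rest is routine.
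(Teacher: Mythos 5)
The paper offers no proof of this proposition: it is quoted as Giraud's classical theorem (with Goldman's book as the implicit reference), so there is nothing in-paper to compare line by line. Your argument is correct and essentially self-contained, and it is in fact the standard proof in spinal coordinates: your normalization $\langle x,\mathbf{p}\rangle=1$, $\langle x,\mathbf{q}\rangle=e^{i\beta}$, $\langle x,\mathbf{r}\rangle=e^{i\gamma}$ is the same parametrization the paper writes as $V(z_1,z_2)=\mathbf{q}\boxtimes\mathbf{r}+z_1\,\mathbf{r}\boxtimes\mathbf{p}+z_2\,\mathbf{p}\boxtimes\mathbf{q}$ (the box products are, up to conjugation and scale, the dual basis you invoke). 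The character-comparison step is sound — the seven exponentials $1,e^{\pm i\beta},e^{\pm i\gamma},e^{\pm i(\beta-\gamma)}$ are independent and a trigonometric polynomial vanishing on the nonempty open set $U$ vanishes identically — and the deduction $|A|=|A'|$, $|B|=|B'|$, $|C|=|C'|$ and then $(A',B',C')=e^{i\mu}(A,B,C)$ when all entries are nonzero is right. I checked the "finitely many remaining cases": if exactly one of $A,B,C$ vanishes, the relations $\overline{A'}B'=\overline{A'}C'=0$ force the primed triple to vanish in the same slot (the alternative $B'=C'=0$ contradicts $\overline{B}C=\overline{B'}C'\neq 0$), so $a$ and $b$ do land on one common line among $L_{pq},L_{qr},L_{pr}$, and your example-free reduction to the complex-spine lemma is the genuinely necessary step (e.g.\ $(B,C)=(2,1)$, $(B',C')=(1,2)$ satisfies all relations with $a\neq b$). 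The concluding lemma — that $\mathcal{B}(p,q,r)$ lies in no complex line, proved via openness, the identity theorem, and uniqueness of the minimizer of the convex distance functions — correctly pins the real spine down to $\sigma_0$. Two cosmetic points: the distance function $\rho(\cdot,p)$ fails to be real-analytic at $p$ itself, so you should apply the identity theorem to $\cosh^2(\rho(\cdot,p)/2)=|\langle\cdot,\mathbf{p}\rangle|^2/(\langle\cdot,\cdot\rangle\langle\mathbf{p},\mathbf{p}\rangle)$ instead, which is analytic everywhere on $\mathbf{H}^2_{\mathbb C}$ and determines the same level sets; and the smooth-disk hypothesis you use for invariance of domain is legitimately available, since the paper asserts it just before the proposition.
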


Note that checking whether an isometry maps  a Giraud disk to another is equivalent to checking that corresponding triple of  points are mapped to each other.

In order to study Giraud  disks, we will use spinal coordinates. The complex slices of $\mathcal{B}(p,q)$ are given explicitly by choosing a lift $\mathbf{p}$ (resp. $\mathbf{q}$)
of $p$ (resp. $q$).

When $p,q\in  {\bf H}^2_{\mathbb C}$, we simply choose lifts such that $\langle \mathbf{p},\mathbf{p}\rangle= \langle \mathbf{q},\mathbf{q}\rangle$.
In this paper, we will mainly use these parametrization when  $p,q\in  \partial{\bf H}^2_{\mathbb C}$. In that case, all lifts are null vectors and the condition
 $\langle \mathbf{p},\mathbf{p}\rangle= \langle \mathbf{q},\mathbf{q}\rangle$  is vacuous. Then we choose some fixed lift $\mathbf{p}$
 for  the center of the Ford domain, and we take $ \mathbf{q}=G( \mathbf{p})$ for some $G\in \mathbf{ SU}(2,1)$.

The complex slices of $\mathcal{B}(p,q)$ are obtained as the set of negative lines $(\overline{z}\mathbf{p}-\mathbf{q})^{\bot}$ in ${\bf H}^2_{\mathbb C}$ for some arc of values of $z\in S^1$,  which is determined by requiring that $\langle \overline{z}\mathbf{p}-\mathbf{q},\overline{z}\mathbf{p}-\mathbf{q}\rangle>0$.

Since a point of the bisector is on precisely one complex slice, we can parameterize the Giraud torus $\hat{\mathcal{B}}(p,q,r)$  by $(z_1,z_2)=(e^{it_1},e^{it_2})\in S^1\times S^1 $ via
$$V(z_1,z_2)=(\overline{z}_1\mathbf{p}-\mathbf{q})\boxtimes (\overline{z}_2\mathbf{p}-\mathbf{r})=\mathbf{q}\boxtimes \mathbf{r}+z_1 \mathbf{r}\boxtimes \mathbf{p}+z_2 \mathbf{p}\boxtimes \mathbf{q}.$$

The Giraud disk $\mathcal{B}(p,q,r)$ corresponds to the $(z_1,z_2)\in S^1\times S^1 $  with  $$\langle V(z_1,z_2),V(z_1,z_2)\rangle<0.$$ It follows from the fact the bisectors are covertical that  this region is a topological disk, see \cite{Go}.

The boundary at infinity $\partial\mathcal{B}(p,q,r)$ is a circle, given in spinal coordinates by the equation
 $$\langle V(z_1,z_2),V(z_1,z_2)\rangle=0.$$

 A defining equation for the trace of another bisector $\mathcal{B}(u,v)$ on the Giraud disk $\mathcal{B}(p,q,r)$ can be written in the form
 $$| \langle V(z_1,z_2),u\rangle|=| \langle V(z_1,z_2),v\rangle|,$$  provided that $u$ and $v$ are suitably chosen lifts.
 The expressions $\langle V(z_1,z_2),u\rangle$ and $\langle V(z_1,z_2),v\rangle$ are affine in $z_1, z_2$.

 This triple bisector intersection can be parameterized  fairly explicitly, because one can solve the equation
 $$|\langle V(z_1,z_2),u\rangle|^2=|\langle V(z_1,z_2),v\rangle|^2 $$ for one of the variables $z_1$ or $z_2$ simply by solving a quadratic equation.
  A detailed explanation of how this works can be found in \cite{Deraux:2016,dpp}.

\subsection{Isometric spheres  and Ford domain}
\begin{defn} For any $G\in \mathbf{SU}(2,1)$ that does not fix $q_{\infty}$, the \emph{isometric sphere} of $G$, we denote it  by $\mathcal{I}(G)$, is defined to be
$$\mathcal{I}(G)=\{p\in {\bf H}^2_{\mathbb C} \cup \partial{\bf H}^2_{\mathbb C}: |\langle \mathbf{p},q_{\infty}\rangle|=|\langle \mathbf{p},G^{-1}(q_{\infty})\rangle|\},$$
where $ \mathbf{p}$ is the standard lift of $p\in {\bf H}^2_{\mathbb C} \cup \partial{\bf H}^2_{\mathbb C}$.
\end{defn}

The {\it interior} of $\mathcal{I}(G)$ is the component of its complement in  ${\bf H}^2_{\mathbb C} \cup \partial{\bf H}^2_{\mathbb C}$ that does not contain $q_{\infty}$, namely,
$$\{p\in {\bf H}^2_{\mathbb C} \cup \partial{\bf H}^2_{\mathbb C}: |\langle \mathbf{p},q_{\infty}\rangle|>|\langle \mathbf{p},G^{-1}(q_{\infty})\rangle|\}.$$
The {\it exterior} of $\mathcal{I}(G)$  is the component that contains the point at infinity $q_{\infty}$.

Suppose that $G\in \mathbf{SU}(2,1)$ written in $3\times 3$  complex matrices $(g_{ij})_{1\leq i,j\leq 3}$ does not fix $q_{\infty}$.
Then $G^{-1}(q_{\infty})=(\overline{g}_{33},\overline{g}_{32},\overline{g}_{31})^{T}$ and $g_{31}\neq 0$.  Meanwhile, the horospherical coordinate of $G^{-1}(q_{\infty})$ is
$$G^{-1}(q_{\infty})=\left(\frac{\overline{g}_{32}}{\overline{g}_{31}}, 2{\rm Im}\left(\frac{\overline{g}_{33}}{\overline{g}_{31}}\right)\right).$$

We can also describe the  isometric sphere of $G$ by using the Cygan metric.  For concreteness, the  isometric sphere  $\mathcal{I}(G)$ is
the Cygan sphere in ${\bf H}^2_{\mathbb C} \cup \partial{\bf H}^2_{\mathbb C}$ with center $G^{-1}(q_{\infty})$ and radius $r_G=\sqrt{\frac{2}{|g_{31}|}}$.

Furthermore, we have

\begin{prop} Let $G\in \mathbf{SU}(2,1)$  be an isometry of ${\bf H}^2_{\mathbb C}$ not fixing $q_{\infty}$.
\begin{itemize}
  \item The transformation $G$  maps $\mathcal{I}(G)$ to $\mathcal{I}(G^{-1})$, and the interior of $\mathcal{I}(G)$ to the exterior of $\mathcal{I}(G^{-1})$.
  \item For any $A\in\mathbf{SU}(2,1)$ fixing $q_{\infty}$ and such that the corresponding eigenvalues has unit modulus, we have $\mathcal{I}(G)=\mathcal{I}(AG)$.
\end{itemize}
\end{prop}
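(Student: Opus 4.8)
The plan is to argue directly from the defining equation $\mathcal{I}(G)=\{p:|\langle \mathbf{p},q_\infty\rangle|=|\langle \mathbf{p},G^{-1}(q_\infty)\rangle|\}$, using only that $G\in\mathbf{SU}(2,1)$ preserves the Hermitian form and that every quantity occurring is an absolute value of a Hermitian product, hence homogeneous of degree one in each slot. The first thing I would record is that this homogeneity makes $\mathcal{I}(G)$ insensitive to the choice of lift on either side --- in particular it is immaterial that $G\mathbf{p}$ need not be the \emph{standard} lift of $G(p)$ --- and that the residual scalar ambiguity in a matrix of $\mathbf{SU}(2,1)$ (multiplication by a cube root of unity) only rescales $G^{-1}(q_\infty)$ by a factor of modulus one; thus $\mathcal{I}(G)$ is genuinely well defined, and both assertions reduce to short substitutions.

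For the first bullet, fix $p$ with a lift $\mathbf{p}$. Invariance of the form gives $\langle G\mathbf{p},G(q_\infty)\rangle=\langle \mathbf{p},q_\infty\rangle$ and $\langle G\mathbf{p},q_\infty\rangle=\langle G\mathbf{p},G(G^{-1}q_\infty)\rangle=\langle \mathbf{p},G^{-1}(q_\infty)\rangle$. Hence the defining equality $|\langle \mathbf{p},q_\infty\rangle|=|\langle \mathbf{p},G^{-1}(q_\infty)\rangle|$ for $\mathcal{I}(G)$ is equivalent to $|\langle G\mathbf{p},G(q_\infty)\rangle|=|\langle G\mathbf{p},q_\infty\rangle|$; since $G(q_\infty)=(G^{-1})^{-1}(q_\infty)$ and $G\mathbf{p}$ is a lift of $G(p)$, this says precisely $G(p)\in\mathcal{I}(G^{-1})$. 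Running the same chain of identities with $=$ replaced by $>$ shows that $p$ lies in the interior of $\mathcal{I}(G)$, i.e. $|\langle \mathbf{p},q_\infty\rangle|>|\langle \mathbf{p},G^{-1}(q_\infty)\rangle|$, if and only if $|\langle G\mathbf{p},(G^{-1})^{-1}(q_\infty)\rangle|>|\langle G\mathbf{p},q_\infty\rangle|$, which is exactly the condition for $G(p)$ to lie in the exterior of $\mathcal{I}(G^{-1})$.

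For the second bullet, the key point is that $A\in\mathbf{SU}(2,1)$ fixing $q_\infty$ is upper triangular, so $A^{-1}(q_\infty)=\lambda\, q_\infty$ as vectors, with $\lambda$ the reciprocal of the eigenvalue of $A$ at $q_\infty$; the hypothesis says $|\lambda|=1$. By linearity of $G^{-1}$ we get $(AG)^{-1}(q_\infty)=G^{-1}A^{-1}(q_\infty)=\lambda\,G^{-1}(q_\infty)$, so $|\langle \mathbf{p},(AG)^{-1}(q_\infty)\rangle|=|\lambda|\,|\langle \mathbf{p},G^{-1}(q_\infty)\rangle|=|\langle \mathbf{p},G^{-1}(q_\infty)\rangle|$ while $\langle \mathbf{p},q_\infty\rangle$ is untouched; comparing defining equations gives $\mathcal{I}(AG)=\mathcal{I}(G)$. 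One also notes in passing that $AG$ does not fix $q_\infty$ --- otherwise $G(q_\infty)=A^{-1}(q_\infty)$ would be $q_\infty$ projectively, against the hypothesis on $G$ --- so $\mathcal{I}(AG)$ is defined. A parallel argument via the Cygan description is available: for an upper-triangular $A\in\mathbf{SU}(2,1)$ the $(1,1)$-entry $a$ (the eigenvalue at $q_\infty$) and the $(3,3)$-entry $c$ satisfy $\bar a c=1$, so $|c|=1/|a|=1$ under the hypothesis, whence $|(AG)_{31}|=|c\,G_{31}|=|G_{31}|$ and $r_{AG}=\sqrt{2/|(AG)_{31}|}=r_G$; since $\mathcal{I}(AG)$ and $\mathcal{I}(G)$ are then Cygan spheres with the same center $G^{-1}(q_\infty)$ and the same radius, they coincide.

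There is no genuine obstacle here: the proposition is a formal consequence of $\mathbf{SU}(2,1)$-invariance of the Hermitian form and of the homogeneity of the defining (in)equalities. The only step needing a little attention is the bookkeeping of lifts and of the cube-root-of-unity ambiguity in $\mathbf{SU}(2,1)$, which is why I would isolate that observation at the outset so that each of the two bullets becomes a two-line verification.
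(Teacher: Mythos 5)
Your proof is correct. The paper states this proposition as standard background material and gives no proof of its own, so there is nothing to compare against; your argument (equivariance of the Hermitian form for the first bullet, and the observation that $(AG)^{-1}(q_\infty)=\lambda\,G^{-1}(q_\infty)$ with $|\lambda|=1$ for the second, together with the careful remarks about lifts and the cube-root-of-unity ambiguity) is the standard one and is complete.
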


\begin{defn}The \emph{Ford domain} $D_{\Gamma}$ for a discrete group $\Gamma < \mathbf{PU}(2,1)$ centered at $q_{\infty}$ is
the intersection of the (closures of the) exteriors of all isometric spheres for elements of $\Gamma$ not fixing $q_{\infty}$. That is,
$$D_{\Gamma}=\{p\in {\bf H}^2_{\mathbb C} \cup \partial{\bf H}^2_{\mathbb C}: |\langle \mathbf{p},q_{\infty}\rangle|\leq|\langle \mathbf{p},G^{-1}(q_{\infty})\rangle|
\ \forall G\in \Gamma \ \mbox{with} \ G(q_{\infty})\neq q_{\infty} \}.$$
\end{defn}

From the definition, one can see that  isometric spheres form the boundary of the Ford domain.
When $q_{\infty}$ is either in the domain  of discontinuity or is a parabolic fixed point, the Ford  domain  is preserved by $\Gamma_{\infty}$, the stabilizer of
$q_{\infty}$ in $\Gamma$.  In this case, $D_{\Gamma}$ is only a fundamental domain modulo the action of $\Gamma_{\infty}$.  In other words, the fundamental domain for
$\Gamma$ is the intersection of the Ford domain with a fundamental domain for $\Gamma_{\infty}$. Facets of codimension one, two, three and four in $D_{\Gamma}$ will be called {\it sides}, {\it ridges}, {\it edges} and {\it vertices}, respectively. Moreover, a  {\it bounded ridge} is a ridge which does not intersect  $\partial {\bf H}^2_{\mathbb C}$, and if the intersection of a ridge $r$ and $\partial {\bf H}^2_{\mathbb C}$ is non-empty, then
 $r$ is an {\it infinite ridge}.

It is usually very hard to determine $D_{\Gamma}$ because one should check infinitely many inequalities.
Therefore a general method will be to guess the Ford polyhedron and check  it using the Poincar\'e polyhedron theorem.
The basic idea is that the sides of $D_{\Gamma}$ should be paired by isometries, and the images of $D_{\Gamma}$
under these so-called side-pairing maps should give a local tiling of ${\bf H}^2_{\mathbb C}$.  If they do (and if
 the quotient of $D_{\Gamma}$ by the identification given by the side-pairing maps is complete), then the Poincar\'{e} polyhedron
 theorem implies that the images of $D_{\Gamma}$ actually give a global tiling of ${\bf H}^2_{\mathbb C}$.

Once a fundamental domain is obtained, one gets an explicit presentation of $\Gamma$ in terms of the generators given by the side-pairing maps together
with a generating set for the stabilizer $\Gamma_{\infty}$, where the relations corresponding to so-called ridge cycles, which correspond to the local tiling bear each codimension-two faces. For more on the Poincar\'e polyhedron theorem, see \cite{dpp, ParkerWill:2016}.

\section{The representation of the triangle group $\Delta_{3,4,\infty;n}$}\label{sec-gens}
We will explicitly give a matrix representation of $\Delta_{3,4,\infty; n}$ in $\mathbf{SU}(2,1)$ for $n=4,6$. First note that $\Delta_{3,4,\infty;n}$ is   isomorphic to $\Delta_{3,4,n;\infty}$ as subgroups of $\mathbf{SU}(2,1)$ \cite{kpt, Thompson:2010}.
Let $I_1$, $I_2$ and $I_3$ be the three complex reflections of $\Delta_{3,4,\infty;n}$. Suppose that complex reflections $I_1$ and $I_2$
in $\mathbf{SU}(2,1)$ so that $I_1I_2$ is a parabolic element fixing $q_{\infty}$. By using the similar argument in \cite{MaX:2020},
the matrices of $I_1$ and $I_2$ are given by
\begin{equation*}\label{eq-I1-I2}
I_1=\left[\begin{matrix}
-1 & 0& 0 \\ 0 & 1 & 0 \\ 0 & 0 & -1 \end{matrix}\right], \quad
I_2=\left[\begin{matrix}
-1 & -2 & 2 \\ 0 & 1 & -2 \\ 0 & 0 & -1 \end{matrix}\right]
\end{equation*}
and the matrix of $I_3$ is
\begin{equation*}\label{eq-I3}
I_3=\left[\begin{matrix}
-\frac{1}{2}& \frac{5-\sqrt{23}i}{12}& \frac{1}{3} \\ \frac{5+\sqrt{23}i}{8} &0 &\frac{5+\sqrt{23}i}{12} \\ \frac{3}{4} &  \frac{5-\sqrt{23}i}{8} & -\frac{1}{2} \end{matrix}\right], \quad
I_3=\left[\begin{matrix}
-\frac{1}{2}& \frac{5-\sqrt{23}i}{12}& \frac{1}{3} \\ \frac{5+\sqrt{23}i}{8} &0 &\frac{5+\sqrt{23}i}{12} \\ \frac{3}{4} &  \frac{5-\sqrt{23}i}{8} & -\frac{1}{2} \end{matrix}\right]
\end{equation*} for $n=4,6$  respectively.

\medskip

\section{The combinatorics of the Ford domain for the  even subgroup of $\Delta_{3,4,\infty;4}$}\label{section:comb344}
In this section, we will construct the Ford domain for the subgroup of $\Delta_{3,4,\infty;4}$ generated by the elements $I_1I_2$ and $I_2I_3$.  We will also describe the combinatorial structure
of the Ford domain.

\begin{defn} Let $\Gamma_1$ be the even length subgroup of the $\Delta_{3,4,\infty;4}$-triangle group, i.e. the subgroup generated by $I_1I_2$ and $I_2I_3$.
We define $A=I_1I_2, B=I_2I_3$, and write $a,b$ for $A^{-1}, B^{-1}$ respectively.
\end{defn}

One easily checks that

\begin{equation}\label{eq-a-b}
A=\left[\begin{matrix}
1 & 2& -2 \\ 0 & 1 & 2 \\ 0 & 0 & 1 \end{matrix}\right], \quad
B=\left[\begin{matrix}
\frac{1-2i}{2} & \frac{1-i}{2} & \frac{-5-2i}{2}\\ \frac{-1+i}{2}& -1+i & \frac{3+i}{2} \\ -\frac{1}{2} & \frac{-1+i}{2}& \frac{1}{2} \end{matrix}\right].
\end{equation}

First, we give a brief summary of the method  to get the  Ford domain.

A reduced word  consists of $A,B,a,b$ is called an \emph{essential word} in $\Gamma_1$  if the head or the tail of this word is not $A$ or $a$ and the word can not be reduced
 to a shorter one in $\Gamma_1$. For example, $BaB$, $Bab$, $baaB$ are essential words and $aB$, $BA$, $baBA$ are not  essential words. Note that $B^3=id$ in $\Gamma_1$, so
 neither $B^2$ nor  $BAB^2$ is an   essential word, but $b$ and $BAb$ are  essential words

One can find that
\begin{itemize}
  \item the essential words for the group elements of length 1 are  $b,B$;
  \item  there are no  essential words for the group elements of length 2;
  \item the essential words for the group elements of length 3 are  $Bab$, $BaB$, $BAB$, $bAB$  and their inverses;
  \item the essential words for the group elements of length 4 are  $Baab$, $BaaB$, $BAAB$, $bAAB$ and their inverses.
    \end{itemize}

Let $S_n$ be the set of group elements in $\Gamma_1$  that can be expressed as essential words of length at most $n$. The set $S_n$ is symmetric, that is,
if $g\in S_n$, then $g^{-1}$ also belongs to $S_n$.

The polyhedron $D_{S_n}$ will be the intersection of the exteriors of $\mathcal{I}(g)$ for all $g\in S_n$ and all their translates by powers of $A$.  The polyhedron $D_{S_n}$ is also called a  \textsl{partial Ford domain}. Start with
$D_{S_3}$ or $D_{S_4}$ and do the following:

(1) Check if the polyhedron $D_{S_n}$ is the Ford domain by using the Poincar\'e polyhedron theorem.  If it is, we find the Ford domain for $\Gamma_1$  and stop the
procedure. Else go to Step (2).

(2) $D_{S_n}$ does not have side pairings. We will add some essential words of length $n+1$ to $D_{S_n}$ and go to Step (1).

We do not know  a priori that the procedure will stop in finite steps due to Bowditch's result \cite{Bowditch:1993}.   Fortunately, the procedure was  terminated  quickly in our cases.  We find  the set $S^*$ with the  essential words
$$b,B,Bab,BAb,BaB,bAb, BAB,bab,bAB,baB.$$

 We denote the polyhedron $D_{S^*}$ be the intersection of the exteriors of the isometric spheres of this ten elements and all their translates by powers of $A$. The  polyhedron $D_{S^*}$ is our guess for the Ford domain of $\Gamma_1$.
We will show that
\begin{thm} \label{thm:Ford} $D_{S^*}$ is the Ford domain of $\Gamma_1$.
\end{thm}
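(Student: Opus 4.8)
The plan is to verify the hypotheses of the Poincar\'e polyhedron theorem for the candidate polyhedron $D_{S^*}$. The first step is to pin down the combinatorial structure of $D_{S^*}$ itself: enumerate the isometric spheres $\mathcal{I}(g)$ for $g$ in the list $\{b,B,Bab,BAb,BaB,bAb,BAB,bab,bAB,baB\}$ together with their $A^k$-translates, compute their centers (the Heisenberg points $g^{-1}(q_\infty)$) and Cygan radii $r_g=\sqrt{2/|g_{31}|}$ from the matrix entries as in Section~\ref{sec-back}, and determine which pairs of spheres actually meet in a $2$-face of the boundary. Because $A$ is the parabolic screw-translation fixing $q_\infty$ and $B^3=\mathrm{id}$, the combinatorics is $A$-equivariant and only finitely many sphere-orbits need be analyzed; here one uses the spinal-coordinate machinery for Giraud disks recalled in the excerpt to decide, for each relevant triple bisector intersection $\mathcal{B}(q_\infty, g^{-1}q_\infty, h^{-1}q_\infty)$, whether it lies inside the other isometric spheres on the list. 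This is the bookkeeping-heavy part and is where explicit computation (the ``examples of calculation'' promised for Section~4) enters.

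Once the face structure is in hand, the second step is to exhibit the side pairings. By Proposition on isometric spheres, $g$ maps $\mathcal{I}(g)$ to $\mathcal{I}(g^{-1})$ and the interior to the exterior; since $S^*$ is symmetric, each side of $D_{S^*}$ is carried to another side by the corresponding word, after post-composing with a suitable element of $\Gamma_\infty=\langle A,\,\text{rotations}\rangle$ so that the image side coincides (not merely is isometric) with the target side. One must check that these maps are genuine side-pairings: they identify sides in pairs, they are compatible with the $A$-action, and together with $\Gamma_\infty$ they generate $\Gamma_1$ (indeed $A=I_1I_2$ and $B=I_2I_3$ already appear). The third step is the ridge (codimension-two face) analysis: for each ridge of $D_{S^*}$ one follows the cycle of side-pairings around it and checks that the cycle transformation is the identity (or finite order consistent with $\Gamma_\infty$), so that the prescribed dihedral angles sum correctly and the images of $D_{S^*}$ tile a neighbourhood of the ridge. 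The Giraud disks being covertical makes each ridge a topological disk whose boundary behaviour is controlled by the affine-in-$z_1,z_2$ trace equations, so the cycle relations can be verified by tracking finitely many vertices.

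The final step is the completeness/tiling conclusion: one must verify the metric completeness condition at the cusp, i.e. that the horoball based at $q_\infty$ is precisely stabilized and quotiented by $\Gamma_\infty$, together with a standard argument (as in \cite{dpp,ParkerWill:2016}) that the local tiling given by side-pairings and ridge cycles extends to a global tiling of $\mathbf{H}^2_{\mathbb C}$; this yields that $D_{S^*}$ is a fundamental domain modulo $\Gamma_\infty$, hence the Ford domain, and simultaneously reproves discreteness of $\Gamma_1$. I expect the main obstacle to be not the Poincar\'e-theorem machinery per se but the certification that the list $S^*$ is complete --- that no further isometric sphere of an essential word of length $\ge 5$ cuts into $D_{S^*}$. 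A~priori (by Bowditch \cite{Bowditch:1993}) there is no finite bound on word length, so one needs an explicit estimate: using the Cygan-radius formula, show that $r_g\to 0$ fast enough as the word length grows (controlling $|g_{31}|$ from below via the matrix recursions for words in $A,B,a,b$) that all sufficiently long words have isometric spheres contained in the interior of $D_{S^*}$, reducing the check to a finite, computer-verifiable list. Getting a clean such bound --- or, alternatively, the consistency check that the Poincar\'e theorem itself succeeds on $D_{S^*}$, which retroactively proves nothing was missed --- is the delicate point.
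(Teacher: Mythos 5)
Your proposal is correct and follows essentially the same route as the paper: guess the finite list $S^*$, determine the combinatorics of the sides and ridges via spinal coordinates and Giraud-disk computations, exhibit the side-pairings and ridge cycles (yielding the relations $B^3=(AB)^4=(AB^2)^n=\mathrm{id}$), and invoke the Poincar\'e polyhedron theorem, whose success retroactively certifies that no isometric sphere of a longer essential word cuts into $D_{S^*}$. The only divergence is your suggested alternative of an explicit lower bound on $|g_{31}|$ in terms of word length; the paper does not pursue this and relies, as in your second option, on the consistency of the Poincar\'e verification itself.
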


The tool for the proof of this Theorem \ref{thm:Ford} is the Poincar\'e polyhedron theorem. The key step in the verification of the hypotheses of the Poincar\'e polyhedron theorem will be the determination of the combinatorics of $D_{S^*}$.
We start with the isometric spheres of $b,BAb,bAb,bab,baB$ and their inverses. First, we fix some notations:

\begin{defn} For $k\in \mathbb{Z}$, let $\mathcal{I}_{g}^{k}$ be the isometric sphere of   $A^{k}gA^{-k}$, which is $\mathcal{I}(A^{k}gA^{-k})=A^{k}\mathcal{I}(g)$, where $g\in S^*$  and $\mathcal{I}_{g}^{0}=\mathcal{I}_{g}$. The spinal sphere corresponds to $\mathcal{I}_{g}^{k}$ will be denoted by $\mathcal{S}_{g}^{k}$, where $\mathcal{S}_{g}^{0}=\mathcal{S}_{g}$.
\end{defn}

Note that $$\mathcal{I}(bAb)=\mathcal{I}(ABaBa),\  \mathcal{I}(bab)=\mathcal{I}(aBABA). $$ So the isometric spheres of the following eight group elements
$$b,B,BAb,Bab,bAb,bab,baB,bAB$$  and their conjugates by powers of $A$ will define all the sides of our Ford domain. We summarize the information of these
isometric spheres in  Table \ref{table:centerradius}.

\begin{table}[htbp]
\caption{The centers and radii of the eight spinal spheres.}
\centering
\begin{tabular}{c c c| c c c}
\hline
Spinal sphere & Center & Radius & Spinal sphere& Center & Radius \\ [1 ex]
 \hline
 $\mathcal{S}_{b}$ & $[1,-1,4]$ & $2$ & $\mathcal{S}_{B}$ & $[1,1,0]$ & $2$   \\ [2 ex]
 $\mathcal{S}_{baB}$ & $[\frac{7}{5},\frac{1}{5},4]$ & $\frac{2}{5^{\frac{1}{4}}}$ &  $\mathcal{S}_{bAB}$ &$[\frac{3}{5},\frac{1}{5},-\frac{4}{5}]$ & $\frac{2}{5^{\frac{1}{4}}}$  \\[2 ex]
 $\mathcal{S}_{bAb}$ & $[0,0,4]$ & $\sqrt{2}$ & $\mathcal{S}_{bab}$ & $[2,0,0]$ & $\sqrt{2}$   \\[2 ex]
$\mathcal{S}_{BAb}$  & $[\frac{3}{5},-\frac{1}{5},\frac{24}{5}]$ & $\frac{2}{5^{\frac{1}{4}}}$ & $\mathcal{S}_{Bab}$ & $[\frac{7}{5},-\frac{1}{5},0]$ & $\frac{2}{5^{\frac{1}{4}}}$  \\  [2 ex]
 \hline
\end{tabular}
\label{table:centerradius}
\end{table}

Note that the spinal spheres in Table \ref{table:centerradius}  do not contain the point $q_{\infty}$. So they are bounded sets in $\partial {\bf H}^2_{\mathbb C}-\{q_{\infty}\}$.
For any two spinal spheres $\mathcal{S}_g$ and $\mathcal{S}_h$, we have $A^{k}(\mathcal{S}_g)\cap \mathcal{S}_h=\emptyset$ whenever $k$ is large enough.
It is well known that two spinal spheres intersect if and only if the corresponding bisectors intersect. By some simple calculation, we can provide some rough
estimate listed in Table \ref{table:intersecton}.

\begin{table}[htbp]
\caption{The intersections of $\mathcal{I}_{B}$ and it's neighbouring isometric spheres.}
\centering
\begin{tabular}{c c | c  c}
\hline
Intersection & The value of $k$ & Intersection & The value of $k$ \\ [1 ex]
 \hline
 $\mathcal{I}_{B}\cap \mathcal{I}_{B}^{k}$  & $k=\pm 1$ &$\mathcal{I}_{B}\cap \mathcal{I}_{b}^{k}$ & $k=0, \pm1$    \\ [2 ex]
$\mathcal{I}_{B}\cap \mathcal{I}_{baB}^{k}$ &  $k=0,\pm1$ &$\mathcal{I}_{B}\cap \mathcal{I}_{bAB}^{k}$ & $k=0,\pm1$    \\ [2 ex]
$\mathcal{I}_{B}\cap \mathcal{I}_{BaB}^{k}$ &  $k=0,\pm1 $ &$\mathcal{I}_{B}\cap \mathcal{I}_{BAB}^{k}$ & $k=0,\pm1$    \\ [2 ex]
$\mathcal{I}_{B}\cap \mathcal{I}_{BAb}^{k}$ &  $k=0, \pm1 $ &$\mathcal{I}_{B}\cap \mathcal{I}_{Bab}^{k}$ & $k=0,\pm1$    \\ [2 ex]
\hline
\end{tabular}
\label{table:intersecton}
\end{table}

 The intersection of $\mathcal{I}_{B}$ and  $D_{S^*}$  is a 3-face, that is, a side of the  partial Ford domain $D_{S^*}$. The intersection $\mathcal{I}_{B} \cap D_{S^*}$ is a 3-dimensional polytope, we describe its combinatorics in the following proposition, which is very important for the proof of Theorem \ref{thm:main}.

\begin{prop}\label{prop: combinatorics}
 The combinatorics of all the 2-faces(ridges) on $\mathcal{I}_{B} \cap D_{S^*}$ are listed as follows:
\begin{itemize}
  \item $\mathcal{I}_{B}\cap \mathcal{I}_{baB}^{\pm 1} \cap D_{S^*}$, $\mathcal{I}_{B}\cap \mathcal{I}_{bAB}^{\pm 1} \cap D_{S^*}$, $\mathcal{I}_{B}\cap \mathcal{I}_{BaB}^{-1} \cap D_{S^*}$, $\mathcal{I}_{B}\cap \mathcal{I}_{BaB}^{1} \cap D_{S^*}$, $\mathcal{I}_{B}\cap \mathcal{I}_{BAb}^{ 1} \cap D_{S^*}$ and $\mathcal{I}_{B}\cap \mathcal{I}_{Bab}^{ -1} \cap D_{S^*}$   are empty;

  \item $\mathcal{I}_{B}\cap \mathcal{I}_{BAb}^{-1} \cap D_{S^*}$ and $\mathcal{I}_{B}\cap \mathcal{I}_{Bab}^{1} \cap D_{S^*}$ are combinatorially triangles with one side at infinity; this means, for example, $\mathcal{I}_{B}\cap \mathcal{I}_{Bab}^{1} \cap D_{S^*}$ is a triangle with one ideal boundary in $\partial {\bf H}^2_{\mathbb C}$. See part (f) and (e) of  Figure  \ref{fig:B};

  \item  $\mathcal{I}_{B}\cap \mathcal{I}_{BaB}^{1} \cap D_{S^*}$, $\mathcal{I}_{B}\cap \mathcal{I}_{BAB}^{-1} \cap D_{S^*}$, $\mathcal{I}_{B}\cap \mathcal{I}_{BAb} \cap D_{S^*}$  and $\mathcal{I}_{B}\cap \mathcal{I}_{Bab} \cap D_{S^*}$ are combinatorially quadrangles with one side at infinity.  See part (h),  (g),  (m) and  (l) of  Figure  \ref{fig:B};

  \item $\mathcal{I}_{B}\cap \mathcal{I}_{B}^{\pm 1} \cap D_{S^*}$, $\mathcal{I}_{B}\cap \mathcal{I}_{baB} \cap D_{S^*}$ and $\mathcal{I}_{B}\cap \mathcal{I}_{bAB} \cap D_{S^*}$ are combinatorially pentagons with one side at infinity. See part (c),  (d),  (j) and  (k) of  Figure  \ref{fig:B};

  \item $\mathcal{I}_{B}\cap \mathcal{I}_{b} \cap D_{S^*}$ is a combinatorially dodecagon with no side at infinity.  See part (i) of  Figure  \ref{fig:B};

  \item The combinatorial type of $\mathcal{I}_B\cap  \mathcal{I}_{b}^{1} \cap D_{S^*}$, $\mathcal{I}_B\cap  \mathcal{I}_{b}^{-1} \cap D_{S^*}$, $\mathcal{I}_B\cap  \mathcal{I}_{BaB} \cap D_{S^*}$ and $\mathcal{I}_B\cap  \mathcal{I}_{BAB} \cap D_{S^*}$ are the same. It is a union of a pentagon including a side at infinity and a quadrangle with a common vertex.  See part (a),  (b),  (o) and  (n) of  Figure  \ref{fig:B}.

\end{itemize}

\end{prop}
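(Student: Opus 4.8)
The plan is to determine the combinatorics of the side $\mathcal{I}_B \cap D_{S^*}$ by explicitly computing, for each neighbouring isometric sphere $\mathcal{I}_h^k$ appearing in Table \ref{table:intersecton}, the trace $\mathcal{I}_B \cap \mathcal{I}_h^k$ on the Giraud disk (or bisector) $\mathcal{I}_B$, and then checking which portions of this trace survive after imposing all the remaining inequalities defining $D_{S^*}$. Concretely, I would fix spinal coordinates on $\mathcal{I}_B$ as in Section \ref{sec-back}: take $\mathbf{p} = q_\infty$, $\mathbf{q} = B^{-1}(q_\infty)$, and use a third center $\mathbf{r}$ (coming from one of the neighbouring elements) so that $\mathcal{I}_B$ is parameterized by $(z_1,z_2) \in S^1 \times S^1$ via $V(z_1,z_2) = \mathbf{q}\boxtimes\mathbf{r} + z_1\,\mathbf{r}\boxtimes\mathbf{p} + z_2\,\mathbf{p}\boxtimes\mathbf{q}$, with the disk $\mathcal{I}_B$ itself cut out by $\langle V,V\rangle < 0$ and its boundary circle at infinity by $\langle V,V\rangle = 0$. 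In these coordinates each competing constraint $|\langle \mathbf{p}, q_\infty\rangle| \le |\langle \mathbf{p}, (A^k h A^{-k})^{-1}(q_\infty)\rangle|$ becomes an inequality of the form $|\langle V(z_1,z_2), u\rangle| \le |\langle V(z_1,z_2), v\rangle|$ with $u,v$ affine in $(z_1,z_2)$; squaring and clearing denominators turns each bounding curve into the real zero locus of an explicit real-analytic function on the torus, which, as recalled in Section \ref{sec-back} (following \cite{Deraux:2016,dpp}), can be solved for one variable via a quadratic. This reduces the whole problem to a finite bookkeeping computation.

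The steps, in order, are: (i) list all isometric spheres $\mathcal{I}_h^k$ that can possibly meet $\mathcal{I}_B$ — this is exactly the content of Table \ref{table:intersecton}, with the finiteness justified by the center/radius estimates of Table \ref{table:centerradius} and the fact that $A^k$-translates march off to infinity in the Cygan metric; (ii) for each such pair, write down the defining equation of the trace on $\mathcal{I}_B$ and solve it, obtaining an explicit arc (possibly empty, possibly disconnected) on the Giraud disk; (iii) superimpose all these arcs and, for each candidate 2-face $\mathcal{I}_B \cap \mathcal{I}_h^k \cap D_{S^*}$, check by sampling interior points whether it is nonempty, and if so trace its boundary to read off the cyclic sequence of vertices and edges; (iv) classify each surviving ridge as bounded or infinite by testing whether it meets $\{\langle V,V\rangle = 0\}$, i.e.\ whether it touches $\partial\mathbf{H}^2_{\mathbb C}$; (v) collate the results into the six bulleted cases, matching each to the corresponding panel of Figure \ref{fig:B}. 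The assertions of the form "empty", "triangle with one side at infinity", "quadrangle with one side at infinity", "pentagon with one side at infinity", "dodecagon with no side at infinity", and the split "pentagon $\cup$ quadrangle with a common vertex" are then each read directly off the computed arc arrangement.

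The main obstacle is step (iii): the combinatorial bookkeeping is delicate because, as emphasized in the introduction, a single isometric sphere can contribute two or more 2-faces to $\partial_\infty D$, so one cannot simply identify "$\mathcal{I}_B \cap \mathcal{I}_h^k$" with a single ridge — the trace of $\mathcal{I}_h^k$ on $\mathcal{I}_B$ may be cut by other isometric spheres into several pieces, only some of which lie in $D_{S^*}$, which is precisely what produces the "union of a pentagon and a quadrangle with a common vertex" type in the last bullet. Verifying that the listed faces are exactly the 2-faces of $\mathcal{I}_B \cap D_{S^*}$ (no more, no fewer) requires checking that the finite family from Table \ref{table:intersecton} is genuinely exhaustive and that no lower-order element's isometric sphere secretly clips $\mathcal{I}_B$; this is where the computer-assisted verification, together with the convexity of each isometric-sphere exterior (so that intersections behave predictably), does the real work. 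Once the arc arrangement on $\mathcal{I}_B$ is pinned down, identifying vertices (triple intersections, equivalently the Giraud points of Proposition \ref{prop: combinatorics}'s ambient setup) and checking the incidences against Figure \ref{fig:B} is routine, and establishing whether a vertex or edge is ideal amounts to evaluating $\langle V,V\rangle$ there.
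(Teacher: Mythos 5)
Your proposal is correct and follows essentially the same route as the paper: parameterize each candidate ridge as a Giraud disk in spinal coordinates, reduce every competing isometric-sphere inequality to a polynomial system on the torus, locate the boundary arcs and their intersections with the ideal boundary circle, and certify emptiness or containment by sample points --- exactly the method the paper carries out in its two worked examples (the $\mathcal{I}_{B}\cap \mathcal{I}_{baB}^{1}$ and $\mathcal{I}_{B}\cap \mathcal{I}_{Bab}^{1}$ cases) before declaring the remaining verifications routine. The only slight imprecision is that the $(z_1,z_2)$ coordinates live on each two-dimensional Giraud disk $\mathcal{I}_B\cap\mathcal{I}_h^k$ rather than on the three-dimensional bisector $\mathcal{I}_B$ itself, but your subsequent steps make clear this is what you intend.
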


We summarize  the combinatorics of all 2-faces of the side  $\mathcal{I}_B\cap D_{S^*}$ in Figure  \ref{fig:B}.  See Figures  \ref{fig:bAB}, \ref{fig:bab}, \ref{fig:bAb} and
\ref{fig:Bab} for the combinatorics of all 2-faces of the sides  $\mathcal{I}_{bAB}\cap D_{S^*}$, $\mathcal{I}_{bab}\cap D_{S^*}$, $\mathcal{I}_{bAb}\cap D_{S^*}$ and $\mathcal{I}_{Bab}\cap D_{S^*}$ respectively.

Using similar method in \cite{Deraux:2016, dpp}, we give some examples of calculation, the other routine calculations will be omitted. Throughout the calculation, we denote by $\hat{\mathcal{I}}_{B}$ the extor in the  projective space extending the bisector  $\mathcal{I}_{B}$ (see \cite{Go} for a definition of extor).

\begin{prop} $\mathcal{I}_{B}\cap \mathcal{I}_{baB}^{1}$ is a Giraud disk, which is entirely contained in the interior of the isometric sphere of $b$.
\end{prop}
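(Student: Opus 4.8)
The plan is to verify two independent assertions: first, that the triple intersection $\mathcal{I}_B \cap \mathcal{I}_{baB}^1$ is a Giraud disk (rather than, say, a bisector-with-boundary or an empty set), and second, that this Giraud disk lies entirely inside the isometric sphere of $b$. For the first assertion, recall from Giraud's Proposition that $\mathcal{B}(p,q,r)$ is a Giraud disk precisely when the three centers $p,q,r$ do not lie on a common complex line. Here $\mathcal{I}_B = \mathcal{I}(I_2I_3)$ is the bisector $\mathcal{B}(q_\infty, B^{-1}(q_\infty))$ and $\mathcal{I}_{baB}^1 = A\,\mathcal{I}(baB)$ is the bisector equidistant from $A(q_\infty) = q_\infty$ and $A(baB)^{-1}(q_\infty)$; since both are coequidistant from $q_\infty$, their intersection is $\mathcal{B}(q_\infty, p_1, p_2)$ with $p_1 = B^{-1}(q_\infty)$ and $p_2 = (AbaBA^{-1})^{-1}(q_\infty) = A\,(baB)^{-1}(q_\infty)$. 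So the first step is to compute these two points explicitly from the matrices in \eqref{eq-a-b} (the centers and radii are already recorded in Table \ref{table:centerradius}: $\mathcal{S}_B$ has center $[1,1,0]$ and $\mathcal{S}_{baB}^1 = A(\mathcal{S}_{baB})$ has center $A\cdot[\frac{7}{5},\frac{1}{5},4]$), lift all three of $q_\infty, p_1, p_2$ to $\mathbb{C}^{2,1}$, and check that the $3\times 3$ matrix of lifts is nonsingular — equivalently, that the Hermitian cross products $q_\infty \boxtimes p_1$, etc., are nonzero and the triple is linearly independent. This is a finite determinant computation with the given algebraic numbers (entries in $\mathbb{Q}(\sqrt{23}\,i)$ or $\mathbb{Q}(i)$ depending on the branch), and it also confirms the intersection is nonempty by exhibiting, via the spinal coordinate parametrization $V(z_1,z_2)$, at least one point $(z_1,z_2)\in S^1\times S^1$ with $\langle V,V\rangle < 0$.

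For the second assertion, I would use the spinal (Giraud-torus) coordinates from Subsection 2.4. Parametrize $\hat{\mathcal{B}}(q_\infty, p_1, p_2)$ by $V(z_1,z_2) = \mathbf{p}_1\boxtimes\mathbf{p}_2 + z_1\,\mathbf{p}_2\boxtimes\mathbf{q}_\infty + z_2\,\mathbf{q}_\infty\boxtimes\mathbf{p}_1$, with lifts normalized so that $\mathbf{p}_i = G_i(\mathbf{q}_\infty)$ for the relevant $G_i$, and restrict to the disk $\{\langle V,V\rangle \le 0\}$. Being inside the isometric sphere $\mathcal{I}(b)$ means, for every such $V$,
\[
|\langle V, q_\infty\rangle| > |\langle V, b^{-1}(q_\infty)\rangle|,
\]
i.e. the point lies on the $q_\infty$-side of the bisector $\mathcal{I}(b) = \mathcal{B}(q_\infty, b^{-1}(q_\infty))$. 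Both $\langle V(z_1,z_2), q_\infty\rangle$ and $\langle V(z_1,z_2), b^{-1}(q_\infty)\rangle$ are affine functions of $(z_1,z_2)$ (as noted in the excerpt), so the inequality $|\langle V,q_\infty\rangle|^2 - |\langle V, b^{-1}(q_\infty)\rangle|^2 > 0$ becomes a real-trigonometric inequality in $(t_1,t_2)$ where $z_j = e^{it_j}$; one solves $|\langle V,q_\infty\rangle|^2 = |\langle V, b^{-1}(q_\infty)\rangle|^2$ for one variable as a quadratic, exactly as in \cite{Deraux:2016, dpp}, and checks the sign is uniformly positive over the (closed) disk region. Equivalently and perhaps more cleanly, I would argue boundary-first: show the boundary circle $\partial\mathcal{B}(q_\infty,p_1,p_2) = \{\langle V,V\rangle = 0\}$, which lives in the Heisenberg group, lies strictly inside the Cygan sphere $\mathcal{S}_b$ (center $[1,-1,4]$, radius $2$) — a bounded computation in the Cygan metric — and then invoke that the Giraud disk is connected with that boundary and that it meets $\mathcal{I}(b)$ (if at all) only along the bisector trace, so if no point of the disk lies on $\mathcal{I}(b)$ the whole disk is on one side; picking any interior sample point (e.g. the one found in step one) confirms that side is the interior of $\mathcal{I}(b)$.

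The main obstacle is the second part: proving the strict inequality $|\langle V,q_\infty\rangle| > |\langle V, b^{-1}(q_\infty)\rangle|$ holds on \emph{all} of the disk, not just at sampled points. The affine-in-$(z_1,z_2)$ structure makes this tractable — it reduces to checking that the real curve $\{|\langle V,q_\infty\rangle|^2 = |\langle V, b^{-1}(q_\infty)\rangle|^2\}$ does not enter the region $\{\langle V,V\rangle \le 0\}$ of the Giraud torus, which by the quadratic-solving trick amounts to a discriminant/sign analysis over a compact parameter interval — but it does require care in tracking the algebraic coefficients and confirming no spurious solution branch intersects the disk. I expect that, as in \cite{dpp}, the cleanest route is to solve the bisector-intersection equation for one coordinate, substitute into $\langle V,V\rangle$, and verify the resulting one-variable real function has no zero in the admissible range; the computation is routine in principle but must be done with the exact matrix entries of $B$ (and hence $b$) from \eqref{eq-a-b}.
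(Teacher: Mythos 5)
Your proposal matches the paper's proof in all essentials: the paper likewise parameterizes the Giraud torus in spinal coordinates, exhibits a sample point with $\langle X_0,X_0\rangle<0$ to show the intersection is a nonempty disk, reduces the assertion that the trace of $\hat{\mathcal{I}}_{b}$ misses the disk to a real polynomial system in $(x_1,y_1,x_2,y_2)$ on $S^1\times S^1$ having no real solutions, and concludes with a single sample point lying inside $\mathcal{I}(b)$. The only notable difference is that you explicitly flag the need to rule out a closed component of the trace curve sitting entirely in the interior of the disk, a point the paper treats tersely in this proposition (it addresses it via critical points only in the following one), so your version is if anything slightly more careful.
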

\begin{proof}The Giraud torus  $\hat{\mathcal{I}}_{B}\cap \hat{\mathcal{I}}_{baB}^{1}$ can be parameterized  by  the vector $$V=(\overline{z}_1p_0-b(p_0))\boxtimes (\overline{z}_2p_0-AbAB(p_0)),$$ with $|z_1|=|z_2|=1$. One can choose the following sample point
\begin{eqnarray*}
X_0&=&\left(\left(\frac{1}{2}+\frac{\sqrt{3}}{2}i\right)p_0-b(p_0)\right)\boxtimes \left(p_0-AbAB(p_0)\right)\\
& = & \left(\frac{5}{4}+\frac{\sqrt{3}}{4}+i\left(-\frac{1}{4}+\frac{\sqrt{3}}{4}\right),-\frac{3}{4}+\frac{\sqrt{3}}{2}+i\left(-\frac{1}{2}+\frac{\sqrt{3}}{4}\right),i\right)
\end{eqnarray*} to show that $\mathcal{I}_{B}\cap \mathcal{I}_{baB}^{1}$ is a topological disk, since $\langle X_0,X_0\rangle=\frac{9}{4}-\frac{3\sqrt{3}}{2}<0$.

Writing out $z_j=x_j+iy_j$ for real $x_j$ and $y_j$, the intersection of $\partial_{\infty}(\mathcal{I}_{B}\cap \mathcal{I}_{baB}^{1})\cap \hat{\mathcal{I}}_{b} $ is described by the solutions of the following system

$$\left\{
\begin{aligned}
&\frac{11}{2}-2x_1+4y_1-2x_2-\frac{x_1x_2}{2}-y_1x_2+x_1y_2-\frac{y_1y_2}{2}=0 \\
&x_1-2y_1+x_2-\frac{x_1x_2}{2}+y_1x_2-x_1y_2-\frac{y_1y_2}{2}-\frac{3}{2}=0 \\
&x_1^2 + y_1^2-1=0\\
& x_2^2 + y_2^2-1=0
\end{aligned}
\right.
$$

It is easy to check that this system has no real solutions. That is, the intersection of $\partial_{\infty}(\mathcal{I}_{B}\cap \mathcal{I}_{baB}^{1})\cap \hat{\mathcal{I}}_{b} $ is empty. One can exhibit a single point of $\mathcal{I}_{B}\cap \mathcal{I}_{baB}^{1}$ inside the isometric sphere of $b$.
Therefore $\mathcal{I}_{B}\cap \mathcal{I}_{baB}^{1}$ is entirely contained in the interior of the isometric sphere of $b$.
\end{proof}

\begin{prop} $\mathcal{I}_{B}\cap \mathcal{I}_{Bab}^{1} \cap D_{S^*}$  is a combinatorial triangle.
\end{prop}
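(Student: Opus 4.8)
The plan is to follow the same strategy used in the preceding proposition for $\mathcal{I}_{B}\cap\mathcal{I}_{baB}^{1}$, namely to parameterize the Giraud disk $\mathcal{I}_{B}\cap\mathcal{I}_{Bab}^{1}$ in spinal coordinates and then cut it down by the remaining isometric spheres that occur among the elements of $S^*$ and their $A$-translates. First I would write the Giraud torus $\hat{\mathcal{I}}_{B}\cap\hat{\mathcal{I}}_{Bab}^{1}$ via the vector $V(z_1,z_2)=(\overline{z}_1p_0-B^{-1}(p_0))\boxtimes(\overline{z}_2p_0-A(Bab)^{-1}A^{-1}(p_0))$ with $|z_1|=|z_2|=1$, exhibit a sample negative point to confirm that the disk $\mathcal{I}_{B}\cap\mathcal{I}_{Bab}^{1}=\{\langle V,V\rangle<0\}$ is a nonempty topological disk, and record the boundary circle $\langle V,V\rangle=0$. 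By Proposition~2.7 (Giraud) this Giraud disk lies on exactly the three bisectors $\mathcal{I}_{B}$, $\mathcal{I}_{Bab}^{1}$ and $\mathcal{I}(B^{-1}\cdot(Bab))=\mathcal{I}(ab)$; since $ab$ is not essential (its reduction starts or ends with $a$), that third bisector imposes no new constraint, so the combinatorics of $\mathcal{I}_{B}\cap\mathcal{I}_{Bab}^{1}\cap D_{S^*}$ is governed purely by which \emph{other} isometric spheres $\mathcal{I}_h^{k}$ (for $h\in S^*$, $k$ in the finite range from Table~\ref{table:intersecton}-type estimates) actually cut the disk.

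The core of the proof is then the following finite checklist. For each candidate bisector $\mathcal{I}_h^{k}$ whose spinal sphere could meet $\mathcal{S}_{B}$, I would write its trace on the Giraud disk as $|\langle V(z_1,z_2),u\rangle|=|\langle V(z_1,z_2),w\rangle|$ for suitable lifts $u=q_\infty$, $w=(A^k h A^{-k})^{-1}(q_\infty)$, note that both sides are affine in $(z_1,z_2)$, and solve the resulting quadratic (in one of the variables) exactly as described after Proposition~2.7 and in \cite{Deraux:2016,dpp}. Each such trace is either empty on the disk, or a single arc; recording the endpoints of the surviving arcs and the two arcs coming from $\partial\mathcal{I}_{Bab}^{1}$ and $\partial\mathcal{I}_{B}$ themselves, together with the portion of $\partial_\infty(\mathcal{I}_{B}\cap\mathcal{I}_{Bab}^{1})$ that lies in $D_{S^*}$, one reads off the cell structure. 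The claim is that exactly one internal edge survives (plus the edge along $\partial\mathbf{H}^2_{\mathbb C}$ and the two edges along the neighboring sides $\mathcal{I}_{b}^{k}$ and $\mathcal{I}_{BaB}^{k}$ or whichever are the relevant neighbors), giving a triangle; part~(e) of Figure~\ref{fig:B} records the outcome. The verification that no \emph{other} arc survives is done by checking a finite system of polynomial equations (the two affine equations for the competing bisector together with $|z_1|^2=|z_2|^2=1$) has no solution in the relevant region, just as in the previous proof where the displayed $4\times 4$ system was shown to have no real solutions.

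To organize the bookkeeping I would first prune the candidate list: an isometry $A^k h A^{-k}$ whose isometric sphere contains $\mathcal{I}_{B}\cap\mathcal{I}_{Bab}^{1}$ in its exterior globally, or whose spinal sphere is disjoint from $\mathcal{S}_{B}$ (detectable from the centers and radii in Table~\ref{table:centerradius} via the Cygan-distance triangle inequality), can be discarded immediately; this reduces the work to a handful of genuine intersections with $k\in\{-1,0,1\}$. For the survivors I would either exhibit an explicit sample point of $\mathcal{I}_{B}\cap\mathcal{I}_{Bab}^{1}$ in the exterior of $\mathcal{I}_h^{k}$ and a short argument (using that the trace is a single arc by the quadratic-solution structure) that the whole disk-minus-one-arc lies outside, or, when the bisector genuinely clips the disk, locate the clipping arc and its endpoints on the other edges. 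Finally I would check that the endpoints of the surviving arcs match up consistently around the boundary of the face, confirming it is a topological triangle with exactly one side on $\partial\mathbf{H}^2_{\mathbb C}$.

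The main obstacle I expect is not conceptual but combinatorial-computational: ruling out the spurious intersections. The delicate point is that two bisectors sharing the common center $q_\infty$ (coequidistant bisectors) always intersect abstractly, so one cannot argue by "the spheres are too far apart" alone when the spinal spheres do overlap slightly — one must actually verify that the overlap arc falls \emph{outside} the Giraud disk $\mathcal{I}_{B}\cap\mathcal{I}_{Bab}^{1}$ and not merely outside $D_{S^*}$ for a trivial reason. This forces the explicit (though elementary) solution of the quadratic and a sign check on $\langle V,V\rangle$ along the candidate arc, which is exactly the type of calculation the authors say they will illustrate on examples and otherwise omit; I would present the computation for the one or two genuinely borderline competing bisectors in full and assert the rest as routine, consistent with the paper's stated practice.
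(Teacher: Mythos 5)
Your proposal follows essentially the same route as the paper's proof: parameterize the Giraud disk $\mathcal{I}_{B}\cap\mathcal{I}_{Bab}^{1}$ in spinal coordinates via $V(z_1,z_2)$, intersect with each candidate isometric sphere by solving the affine-plus-unit-circle polynomial system, discard the bisectors whose trace misses the disk (checking a sample point and, where needed, critical points of the defining equation), and read off the surviving arcs. The only quibble is your guessed edge configuration (``one internal edge plus two edges along neighboring sides''): the computation, as in the paper, shows the face is bounded by exactly two Giraud-intersection arcs, on $\hat{\mathcal{I}}_{Aba}$ and $\hat{\mathcal{I}}_{BAB}$, together with one arc of $\partial_\infty(\mathcal{I}_{B}\cap\mathcal{I}_{Bab}^{1})$ --- but since you correctly defer that determination to the computation itself, this does not affect the validity of the method.
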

\begin{proof}
The Giraud disk $\mathcal{I}_{B}\cap \mathcal{I}_{Bab}^{1}$ can be parameterized by negative vector of the form
$$V=(\overline{z}_1p_0-b(p_0))\boxtimes (\overline{z}_2p_0-ABAb(p_0)),$$ where $|z_1|=|z_2|=1$.
Explicitly, this $V$ can be written as $V(z_1,z_2)=v_0+z_1v_1+z_2v_2$, where
\begin{align*}
v_1&=\left(\frac{1}{2},\frac{i}{2},1+\frac{i}{2}\right),\\
v_2&=\left(\frac{-1-i}{2},-\frac{1}{2}-i,0\right), \\
v_3&=\left(\frac{-1+i}{2},\frac{1}{2},0\right).
\end{align*}
Furthermore, $\langle V,V\rangle<0$ can be written as
$${\rm Re} \left(\frac{11}{4}-\frac{5}{2}z_1-\left(\frac{1}{2}-i\right)z_2-\left(\frac{1}{2}-i\right)z_2\overline{z}_1\right)<0.$$
In order to encode the combinatorial structure of this ridge, we need to study the intersection of this Giraud disk with the isometric spheres given in Table \ref{table:intersecton}. The equation of the intersection with $\mathcal{I}_g$ is given by
$$|\langle V(z_1,z_2),p_0\rangle|^2=|\langle V(z_1,z_2),g^{-1}(p_0)\rangle|^2.$$

 We start by studying the intersection of $\hat{\mathcal{I}}_{B}\cap \hat{\mathcal{I}}_{Bab}^{1}$ with $\hat{\mathcal{I}}_{Aba}$.
Writing out $z_j=x_j+iy_j$ for real $x_j$ and $y_j$, the intersection $\partial_\infty(\mathcal{I}_{B}\cap \mathcal{I}_{Bab}^{1})\cap \hat{\mathcal{I}}_{Aba}$
is described by the solutions of the system

$$\left\{
\begin{aligned}
&y_1-x_2y_1-2y_2+y_1y_2+x_1(1+x_2+y_2)=0 \\
&11-2x_2+4x_2y_1-4y_2-2y_1y_2-10x_1-2x_1x_2-4x_1y_2=0 \\
&x_1^2 + y_1^2-1=0\\
& x_2^2 + y_2^2-1=0
\end{aligned}
\right.
$$

This system has exactly two solutions, given approximately by
$$x_1=0.378005..., y_1=-0.925803..., x_2 =0.960944...,
 y_2=0.276744...$$ and $$x_1=0.987712..., y_1=0.156285..., x_2=-0.872037...,
 y_2=0.48944....$$
So the curve $\hat{\mathcal{I}}_{B}\cap \hat{\mathcal{I}}_{Bab}^{1}\cap \hat{\mathcal{I}}_{Aba}$ intersects $\partial_\infty(\mathcal{I}_{B}\cap \mathcal{I}_{Bab}^{1})$ in two points.

Similarly,  the curve $\hat{\mathcal{I}}_{B}\cap \hat{\mathcal{I}}_{Bab}^{1}\cap \hat{\mathcal{I}}_{BAB}$ intersects  $\partial_\infty(\mathcal{I}_{B}\cap \mathcal{I}_{Bab}^{1})$ in two points given approximately by
$$x_1=0.987712..., y_1=-0.156285..., x_2=-0.784829...,
 y_2=0.619713...$$ and $$x_1=0.378005..., y_1=0.925803..., x_2=0.107031...,
 y_2=0.994256....$$

Now the triangle in  part (e) of  Figure \ref{fig:B}  has three boundary arcs, given by $\hat{\mathcal{I}}_{B}\cap \hat{\mathcal{I}}_{Bab}^{1}\cap \hat{\mathcal{I}}_{Aba}$, $\hat{\mathcal{I}}_{B}\cap \hat{\mathcal{I}}_{Bab}^{1}\cap \hat{\mathcal{I}}_{BAB}$ and $\partial_\infty(\mathcal{I}_{B}\cap \mathcal{I}_{Bab}^{1})$.

Next, we include the element $bAB$ in Table 2. From the similar calculation as  above, we find that the curve $\hat{\mathcal{I}}_{B}\cap \hat{\mathcal{I}}_{Bab}^{1}\cap \hat{\mathcal{I}}_{bAB}$ does not intersect the boundary arcs of the triangle. This implies that the boundary of this triangle
is either completely inside or outside the isometric sphere of $bAB$. It is easy to check that it is outside by testing a sample point. We can also prove that the curve $\hat{\mathcal{I}}_{B}\cap \hat{\mathcal{I}}_{Bab}^{1}\cap \hat{\mathcal{I}}_{bAB}$ does not intersect the interior of the  triangle by computing the critical points of the equation for the curve  $\hat{\mathcal{I}}_{B}\cap \hat{\mathcal{I}}_{Bab}^{1}\cap \hat{\mathcal{I}}_{bAB}$.

For the element $aBA$ in Table \ref{table:intersecton}, one can verify that the curve  $\hat{\mathcal{I}}_{B}\cap \hat{\mathcal{I}}_{Bab}^{1}\cap \hat{\mathcal{I}}_{aBA}$ does not intersect $\partial_\infty(\mathcal{I}_{B}\cap \mathcal{I}_{Bab}^{1})$. By taking a sample point, one get that the Giraud disk $\mathcal{I}_{B}\cap \mathcal{I}_{Bab}^{1}$  lies outside  the isometric sphere of $aBA$.

Similar arguments allow us to handle the detailed study of all elements in Table \ref{table:intersecton}.

\end{proof}

\begin{figure}[htbp]
  \centering
  \subfigure[Ridge on $\mathcal{I}_B\cap \mathcal{I}_{Aba}$ ]{\includegraphics[width=0.29\textwidth]{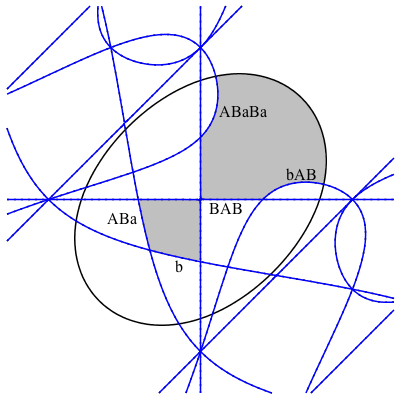}} \hfill
  \subfigure[Ridge on $\mathcal{I}_B\cap \mathcal{I}_{abA}$]{\includegraphics[width=0.3\textwidth]{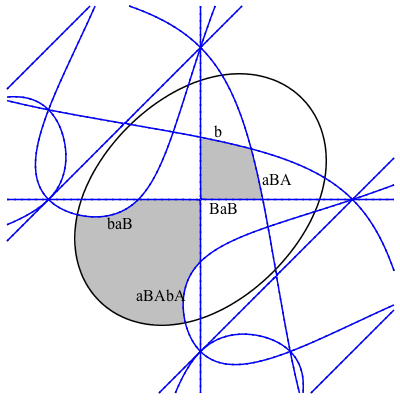}}\hfill
  \subfigure[Ridge on $\mathcal{I}_B\cap \mathcal{I}_{ABa}$]{\includegraphics[width=0.29\textwidth]{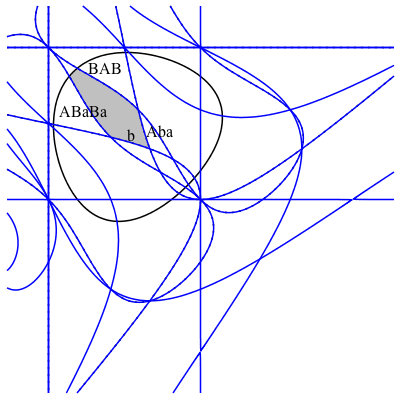}}\\
  \subfigure[Ridge on $\mathcal{I}_B\cap \mathcal{I}_{aBA}$]{\includegraphics[width=0.29\textwidth]{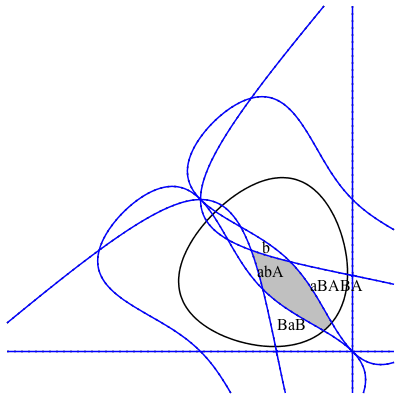}}\hfill
  \subfigure[Ridge on $\mathcal{I}_B\cap \mathcal{I}_{ABaba}$]{\includegraphics[width=0.29\textwidth]{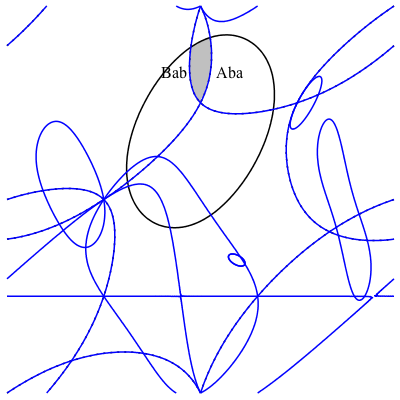}}\hfill
  \subfigure[Ridge on $\mathcal{I}_B\cap \mathcal{I}_{aBAbA}$]{\includegraphics[width=0.29\textwidth]{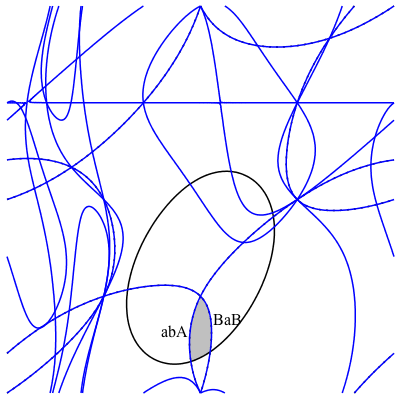}}\\
   \subfigure[Ridge on $\mathcal{I}_B\cap \mathcal{I}_{aBABA}$]{\includegraphics[width=0.29\textwidth]{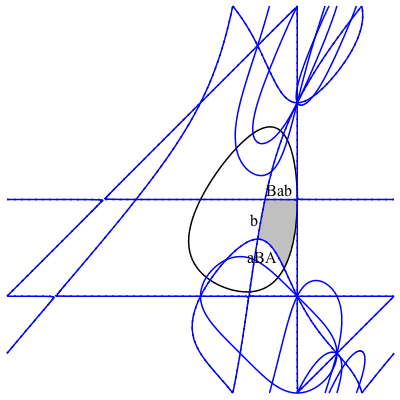}} \hfill
  \subfigure[Ridge on $\mathcal{I}_B\cap \mathcal{I}_{ABaBa}$]{\includegraphics[width=0.29\textwidth]{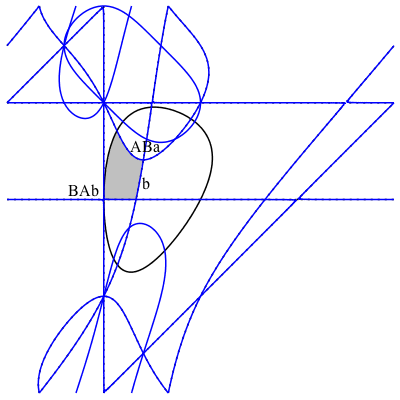}}\hfill
  \subfigure[Ridge on $\mathcal{I}_B\cap \mathcal{I}_{b}$]{\includegraphics[width=0.29\textwidth]{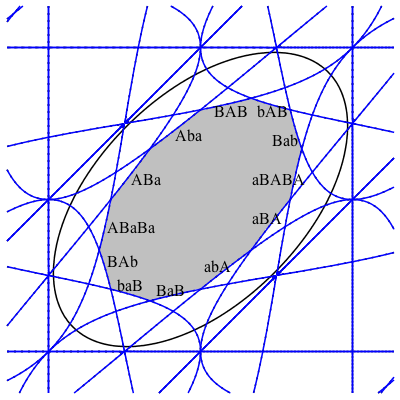}}\\
   \subfigure[Ridge on $\mathcal{I}_B\cap \mathcal{I}_{baB}$]{\includegraphics[width=0.29\textwidth]{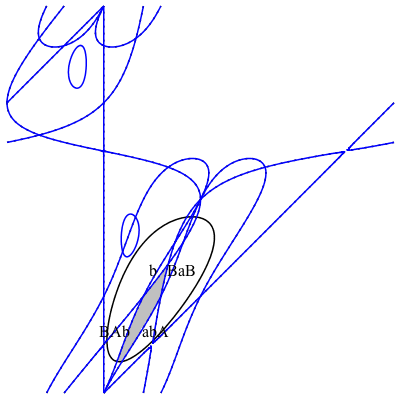}} \hfill
  \subfigure[Ridge on $\mathcal{I}_B\cap \mathcal{I}_{bAB}$]{\includegraphics[width=0.29\textwidth]{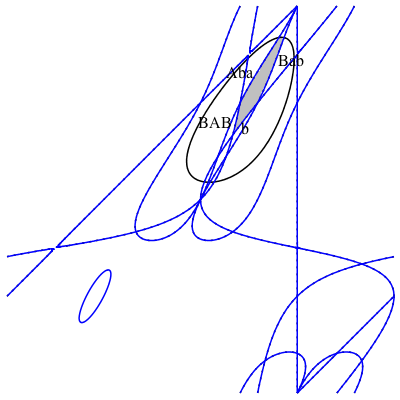}}\hfill
  \subfigure[Ridge on $\mathcal{I}_B\cap \mathcal{I}_{Bab}$]{\includegraphics[width=0.29\textwidth]{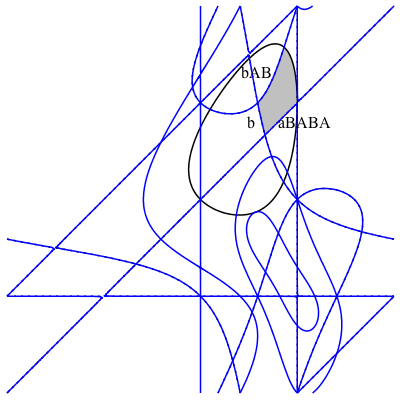}}\\
   \subfigure[Ridge on $\mathcal{I}_B\cap \mathcal{I}_{BAb}$]{\includegraphics[width=0.29\textwidth]{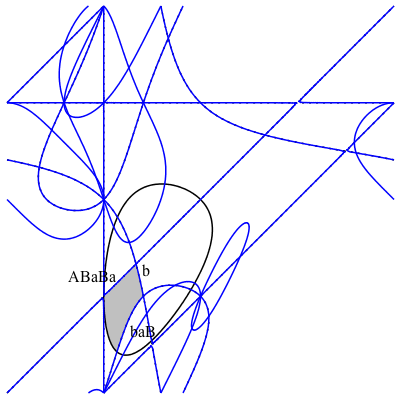}} \hfill
  \subfigure[Ridge on $\mathcal{I}_B\cap \mathcal{I}_{BAB}$]{\includegraphics[width=0.29\textwidth]{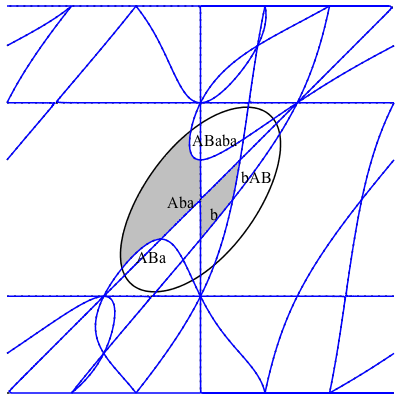}}\hfill
  \subfigure[Ridge on $\mathcal{I}_B\cap \mathcal{I}_{BaB}$]{\includegraphics[width=0.29\textwidth]{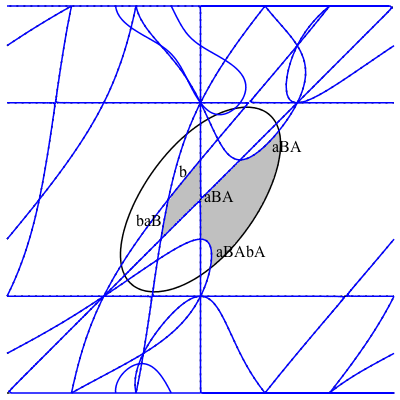}}
  \caption{All the ridges on the side $\mathcal{I}_B$.}\label{fig:B}
\end{figure}

\begin{figure}[htbp]
  \centering
  \subfigure[Ridge on $\mathcal{I}_{bAB}\cap \mathcal{I}_{Aba}$ ]{\includegraphics[width=0.3\textwidth]{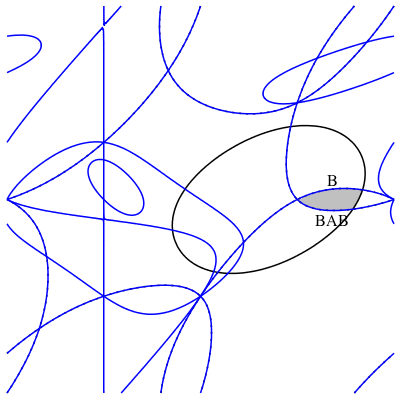}} \hfill
  \subfigure[Ridge on $\mathcal{I}_{bAB}\cap \mathcal{I}_{B}$]{\includegraphics[width=0.3\textwidth]{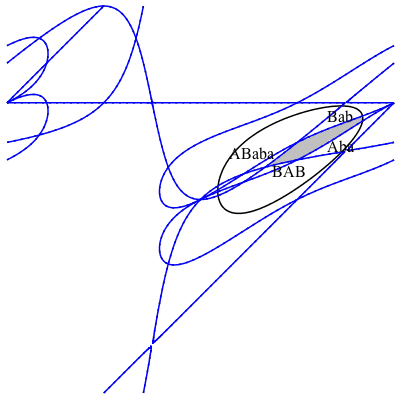}}\hfill
  \subfigure[Ridge on $\mathcal{I}_{bAB}\cap \mathcal{I}_{b}$]{\includegraphics[width=0.3\textwidth]{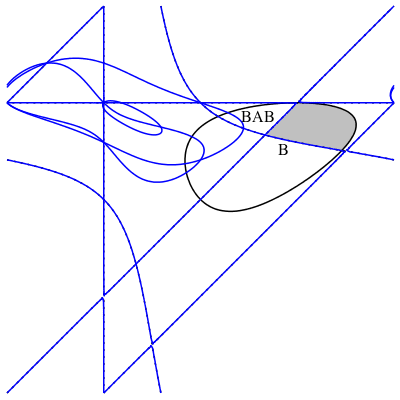}}\\
  \subfigure[Ridge on $\mathcal{I}_{bAB}\cap \mathcal{I}_{BAB}$]{\includegraphics[width=0.3\textwidth]{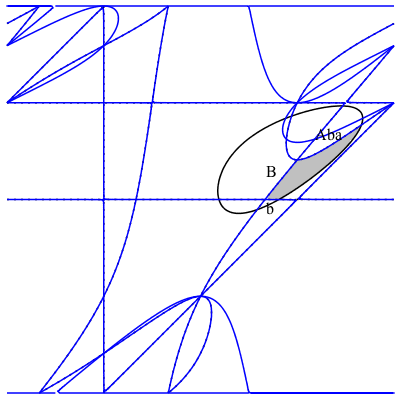}}
  \subfigure[Ridge on $\mathcal{I}_{bAB}\cap \mathcal{I}_{Bab}$]{\includegraphics[width=0.3\textwidth]{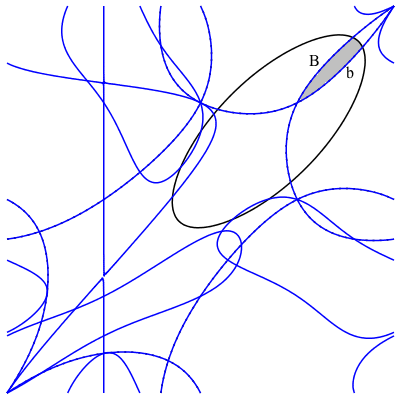}}
  \caption{All the ridges on the side $\mathcal{I}_{bAB}$.}\label{fig:bAB}
\end{figure}

\begin{figure}[htbp]
  \centering
  \subfigure[Ridge on $\mathcal{I}_{bab}\cap \mathcal{I}_{Aba}$ ]{\includegraphics[width=0.3\textwidth]{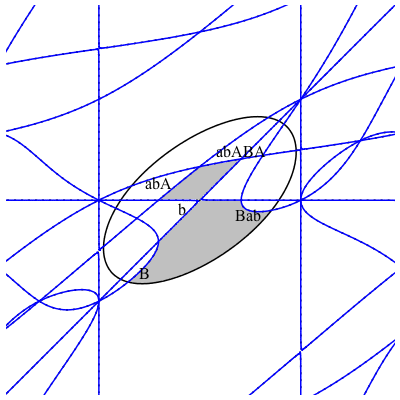}} \hfill
  \subfigure[Ridge on $\mathcal{I}_{bab}\cap \mathcal{I}_{abA}$]{\includegraphics[width=0.3\textwidth]{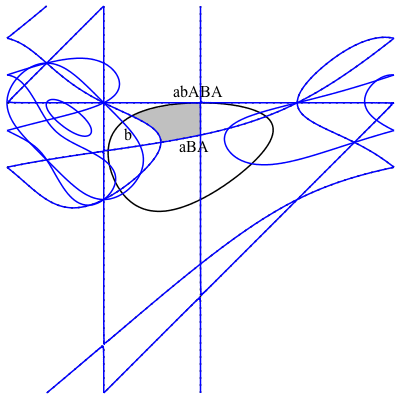}}\hfill
  \subfigure[Ridge on $\mathcal{I}_{bab}\cap \mathcal{I}_{ABa}$]{\includegraphics[width=0.3\textwidth]{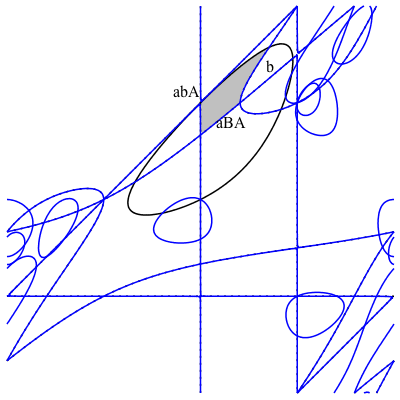}}\\
  \subfigure[Ridge on $\mathcal{I}_{bab}\cap \mathcal{I}_{aBA}$]{\includegraphics[width=0.3\textwidth]{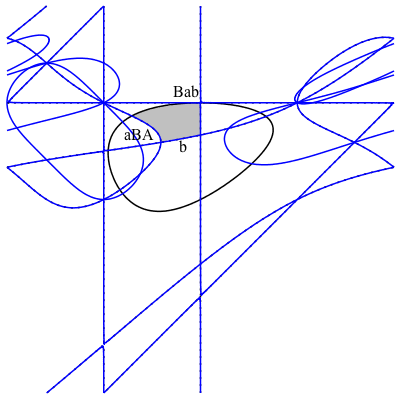}}\hfill
  \subfigure[Ridge on $\mathcal{I}_{bab}\cap \mathcal{I}_{ABaba}$]{\includegraphics[width=0.3\textwidth]{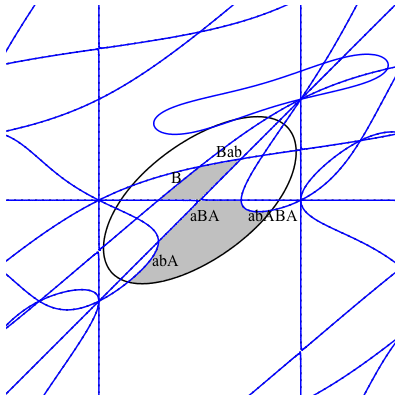}}\hfill
  \subfigure[Ridge on $\mathcal{I}_{bab}\cap \mathcal{I}_{aBAbA}$]{\includegraphics[width=0.3\textwidth]{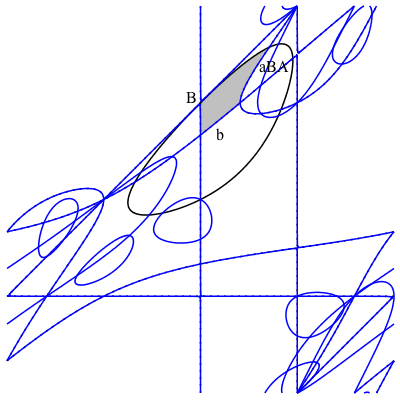}}\\
  \caption{All the ridges on the side $\mathcal{I}_{bab}$.}\label{fig:bab}
\end{figure}

\begin{figure}[htbp]
  \centering
  \subfigure[Ridge on $\mathcal{I}_{bAb}\cap \mathcal{I}_{ABa}$ ]{\includegraphics[width=0.3\textwidth]{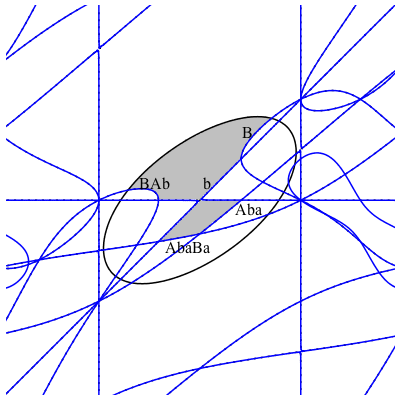}} \hfill
  \subfigure[Ridge on $\mathcal{I}_{bAb}\cap \mathcal{I}_{Aba}$]{\includegraphics[width=0.3\textwidth]{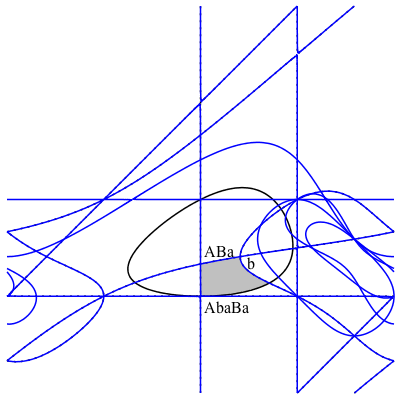}}\hfill
  \subfigure[Ridge on $\mathcal{I}_{bAb}\cap \mathcal{I}_{AbaBa}$]{\includegraphics[width=0.3\textwidth]{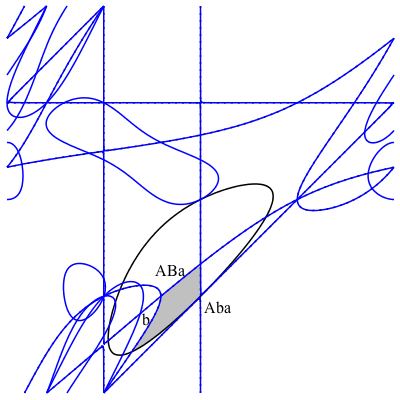}}\\
  \subfigure[Ridge on $\mathcal{I}_{bAb}\cap \mathcal{I}_{B}$]{\includegraphics[width=0.3\textwidth]{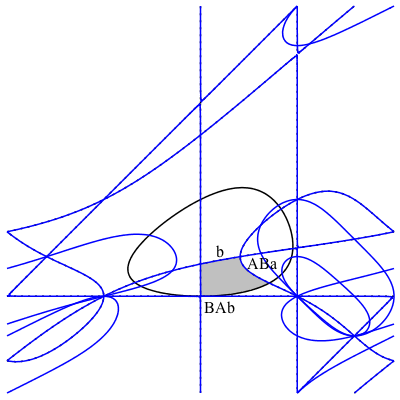}}\hfill
  \subfigure[Ridge on $\mathcal{I}_{bAb}\cap \mathcal{I}_{b}$]{\includegraphics[width=0.3\textwidth]{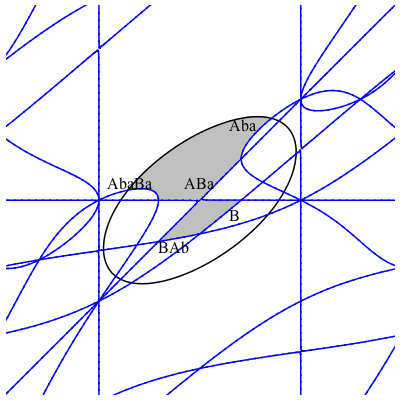}}\hfill
  \subfigure[Ridge on $\mathcal{I}_{bAb}\cap \mathcal{I}_{BAb}$]{\includegraphics[width=0.3\textwidth]{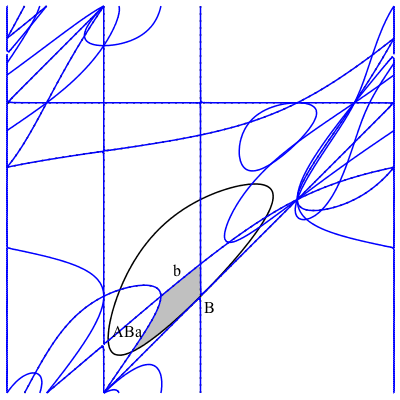}}\\
  \caption{All the ridges on the side $\mathcal{I}_{bAb}$.}\label{fig:bAb}
\end{figure}

\begin{figure}[htbp]
  \centering
  \subfigure[Ridge on $\mathcal{I}_{Bab}\cap \mathcal{I}_{aBA}$ ]{\includegraphics[width=0.3\textwidth]{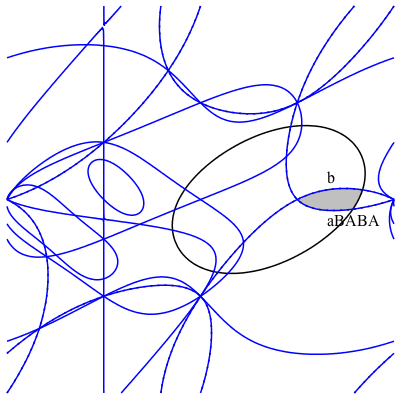}} \hfill
  \subfigure[Ridge on $\mathcal{I}_{Bab}\cap \mathcal{I}_{aBABA}$]{\includegraphics[width=0.3\textwidth]{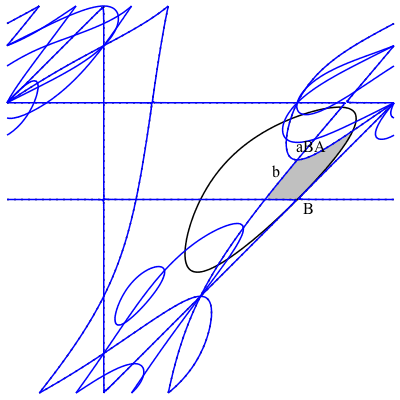}}\hfill
  \subfigure[Ridge on $\mathcal{I}_{Bab}\cap \mathcal{I}_{B}$]{\includegraphics[width=0.3\textwidth]{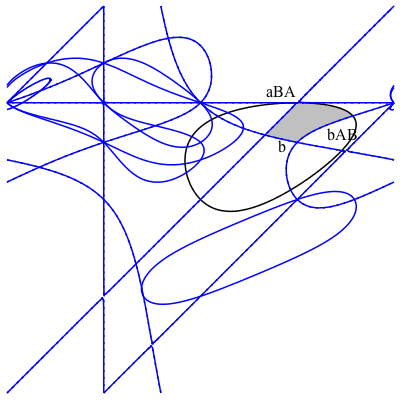}}\\
  \subfigure[Ridge on $\mathcal{I}_{Bab}\cap \mathcal{I}_{b}$]{\includegraphics[width=0.3\textwidth]{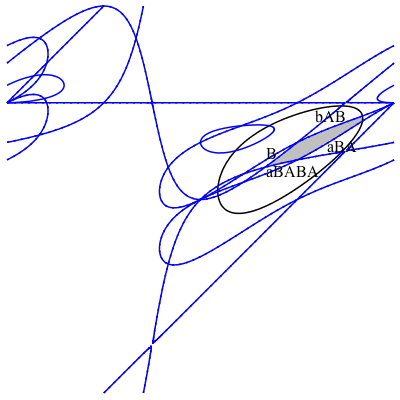}}
  \subfigure[Ridge on $\mathcal{I}_{Bab}\cap \mathcal{I}_{bAB}$]{\includegraphics[width=0.3\textwidth]{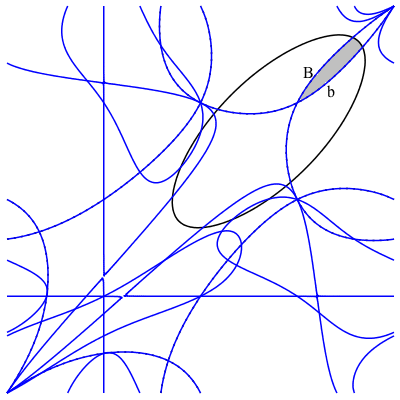}}
  \caption{All the ridges on the side $\mathcal{I}_{Bab}$.}\label{fig:Bab}
\end{figure}

From Proposition \ref{prop: combinatorics},  the isometric sphere $\mathcal{I}_{B}$ contributes a side of the Ford domain, say $\mathcal{I}_{B}\cap D_{S^*}$, where there are fourteen infinite ridges in $\mathcal{I}_{B}\cap D_{S^*}$.   See Table \ref{table:adjacent344} in Section \ref{section: ford344}, the spinal sphere of $B$ is adjacent to $14=2+5+3+4$ spinal spheres. Comparing also to  Figures 3, 5 and 6   of  \cite{Deraux:2016}.

We write $E$ for $\partial_{\infty}D_{S^*}$, and $C$  for $\partial E$. It is easy to see that  $D_{S^*}, E$ and $C$ are all $A$-invariant by the construction. The map $A$  acts
on the Heisenberg group as a Heisenberg translation preserving $x$-axis. In Heisenberg coordinates, the action of $A$ is given by
$$A(z,t)=(z-2,t+4{\rm Re} z).$$ It follows from  Section \ref{section: ford344}  that $C$ is tiled by some polygons and their orbits under the action of $A$.
The identifications in $C$ coming from  the action of $A$.  We denote by $\sim$ the corresponding equivalence relation on $C$; it is easy to check
 that $C/ \sim$ is a torus. We need to claim that this torus is unknotted. So we  prove that

\begin{prop} The $x$-axis of $\partial {\bf H}^2_{\mathbb C}-\{q_{\infty}\}=\mathbb{C} \times \mathbb{R}$ is contained in the complement of $\partial_{\infty}D_{S^*}$.
\end{prop}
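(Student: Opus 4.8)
The plan is to show that every point of the $x$-axis $\{(x,0,0) : x \in \mathbb{R}\} \subset \mathbb{C} \times \mathbb{R}$ lies in the interior of one of the isometric spheres defining $D_{S^*}$, so that the $x$-axis is disjoint from $\partial_{\infty} D_{S^*}$. Since $C = \partial E$ is $A$-invariant and $A$ acts on the $x$-axis by $(x,0,0) \mapsto (x-2,0,0)$ (from the formula $A(z,t) = (z-2, t+4\,\mathrm{Re}\,z)$, which fixes $t=0$ on the real axis), it suffices to check a fundamental segment, say $x \in [0,2]$ or a slightly larger closed interval to be safe near the endpoints, and then translate by powers of $A$.

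First I would use the data of Table \ref{table:centerradius}. Two of the spinal spheres are large: $\mathcal{S}_b$ has center $[1,-1,4]$ and radius $2$, and $\mathcal{S}_B$ has center $[1,1,0]$ and radius $2$; also $\mathcal{S}_{bAb}$ has center $[0,0,4]$, $\mathcal{S}_{bab}$ has center $[2,0,0]$, both of radius $\sqrt 2$. A point $(x,0,0)$ on the $x$-axis lies in the interior of the Cygan sphere with Heisenberg center $(z_0,t_0)$ and radius $r$ iff its Cygan distance to that center is less than $r$, i.e.
\[
\bigl\vert\, |x - z_0|^2 + i(\,-t_0 + 2\,\mathrm{Im}(x\overline{z_0})\,)\,\bigr\vert^{1/2} < r .
\]
Concretely, for $\mathcal{S}_{bab}$ with center $(2,0)$ (reading $[2,0,0]$ as Heisenberg coordinates $(z,t)=(2,0)$) this reads $|x-2|^2 < 2$, i.e. $x \in (2-\sqrt 2,\, 2+\sqrt 2)$; translating by powers of $A$ we get that the intervals $(2k-\sqrt2, 2k+\sqrt2)$, $k \in \mathbb{Z}$, around the even integers are covered by translates of $\mathcal{I}(bab)$. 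To cover the remaining points (those near odd integers and the gaps), I would invoke the isometric spheres whose centers have nonzero $z$-coordinate off the real axis but whose Cygan balls still reach the $x$-axis: $\mathcal{S}_B$ (center $z=1$, $t=0$, radius $2$) covers $|x-1|^2 < 2$, i.e. $x \in (1-\sqrt2, 1+\sqrt2)$ along the real axis, and its $A$-translates cover $(2k+1-\sqrt2, 2k+1+\sqrt2)$. Since $(2-\sqrt2, 2+\sqrt2)$ and $(1-\sqrt2,1+\sqrt2)$ together with all their $A$-translates already cover $\mathbb{R}$ — indeed $1+\sqrt2 > 2$ and $2+\sqrt2 > 3$, so consecutive intervals overlap — the $x$-axis is covered by interiors of isometric spheres. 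I would present this as a short finite verification: list which isometric sphere's interior contains $(x,0,0)$ for $x$ in each of finitely many overlapping subintervals of a period, and note the covering is by overlapping open intervals so there is no boundary point left uncovered.

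The only subtlety, and the step I expect to require the most care, is bookkeeping the identification between the bracket notation $[\,\cdot,\cdot,\cdot\,]$ of Table \ref{table:centerradius} and genuine Heisenberg coordinates $(z,t)$ with $z \in \mathbb{C}$, $t \in \mathbb{R}$ — i.e. whether an entry like $[\tfrac75,\tfrac15,4]$ means $z = \tfrac75 + \tfrac15 i$, $t = 4$, or $z = \tfrac75$, $t = \tfrac15$ with some normalization — and correspondingly computing the Cygan distance from $(x,0,0)$ to each such center exactly, including the imaginary part $2\,\mathrm{Im}(x\overline{z_0})$ which does not vanish when $z_0 \notin \mathbb{R}$. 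Once the convention is pinned down, each inclusion $(x,0,0) \in \mathrm{int}\,\mathcal{I}(g)$ is a single real inequality of the form $|x-z_0|^4 + (2 x\,\mathrm{Im}\,z_0 - t_0)^2 < r^4$, which is elementary to check on an interval; then invariance under $A$ finishes the claim. As a consequence, $C/\!\sim$ is an unknotted torus in $\partial \mathbf{H}^2_{\mathbb{C}} - \{q_\infty\} = \mathbb{C} \times \mathbb{R}$, since it is disjoint from the properly embedded line (the $x$-axis together with $q_\infty$, a round circle in $S^3$) that one expects to be a core curve of the complementary solid torus.
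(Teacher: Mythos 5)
Your approach is essentially the paper's: reduce by the $A$-periodicity of the $x$-axis to a fundamental interval and check that it lies in the interiors of explicitly listed spinal spheres (the paper covers $[-1,1]$ by $\mathcal{S}_{Aba}$ and $\mathcal{S}_{B}$, invoking convexity of spinal spheres; you cover $\mathbb{R}$ by $A$-translates of $\mathcal{S}_{bab}$ and $\mathcal{S}_{B}$), so the proposal is correct in substance. Two small slips do not affect the conclusion, because the $A$-translates of $\mathcal{S}_{bab}$ alone already cover the axis (intervals of radius $\sqrt2>1$ about the even integers): the center of $\mathcal{S}_{B}$ is $1+i$ rather than $1$, so its trace on the axis is not $(1-\sqrt2,1+\sqrt2)$, and for a radius-$2$ Cygan sphere the interior condition is $<r^2=4$, not $<2$.
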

\begin{proof} The fundamental domain of $A$  acting on $x$-axis is a segment parameterized by
$$\iota_x=\{[x,0,0] \in \partial {\bf H}^2_{\mathbb C} ~~| ~~\ x\in [-1,1]\}.
$$
We decompose this segment into two parts as
\begin{align*}
\iota_x^1&=\{[x,0,0]  \in \partial {\bf H}^2_{\mathbb C} ~~| ~~\ x\in [-1,0]\},  \\
\iota_x^2&=\{[x,0,0]  \in \partial {\bf H}^2_{\mathbb C} ~~| ~~\ x\in [0,1]\}.
\end{align*}

Note that a spinal sphere is convex. It is easy to check that the interval $\iota_x^1$ is in the interior of the isometric sphere $\mathcal{S}_{Aba}$ and the interval $\iota_x^2$ is in the interior of the isometric sphere  $\mathcal{S}_{B}$.
\end{proof}

\section{Manifold at infinity of the complex hyperbolic triangle group  $\Delta_{3,4,4;\infty}$}\label{section: ford344}

In this section, we study the manifold  at infinity of the even subgroup of the complex hyperbolic triangle group  $\Delta_{3,4,4;\infty}$.

Let $I_{1}$, $I_{2}$ and $I_{3}$ be the matrices in section \ref{sec-gens}, we have $I_{1}I_{2}$ is parabolic, $(I_{2}I_{3})^3=(I_{3}I_{1})^4=id$, and $(I_{1}I_{3}I_{2}I_{3})^4=(I_{1}I_{2}I_{3}I_{2})^4=id$.
Let $A=I_{1}I_{2}$ and  $B=I_{2}I_{3}$, then $B^3=id$, $(AB)^4=id$ and  $(AB^2)^4=id$,  and remember that we write $a,b$ for $A^{-1}, B^{-1}$ respectively.

\subsection{The Ford domain of the complex hyperbolic triangle group  $\Delta_{3,4,4;\infty}$}

In this section we will show that $D_{S^*}$ is  the Ford domain by using the Poincar\'e polyhedron theorem and study the ideal boundary  of $D_{S^*}$. We will give the detailed information on spinal spheres and infinite ridges, which is enough to get the manifold  at infinity.

Recall $S^*$ in Section \ref{section:comb344} is the set of  the  essential words
$$\{b, B, Bab, BAb, BaB, bAb, BAB, bab, bAB, baB\},$$ which define side-pairings of $D_{S^*}$.

The main task is to check  the Poincar\'e ridge cycles. Recall that a ridge is by definition a codimension-2 facet of $D_{S^*}$.
Since all the complex
spines of our isometric spheres  intersect at $q_{\infty}$, we can think of the  intersections as being coequidistant, and  in particular, their intersections are all smooth disks, equidistant
from three points.  Note that no ridge of $D_{S^*}$ is totally geodesic. The ridges of $D_{S^*}$ are the so-called Giraud disk which are generic intersections of two bisectors.

Because of Giraud's theorem, the ridges of $D_{S^*}$ lie on precisely three bisectors, so the local tiling condition near generic ridges is actually a consequence of the existence of
side-pairing. The ridge cycles can be obtained by computing orbits of these triples of points under the side-pairings; whenever a ridge in the cycle differs from the starting
ridge by a power of $A$, we close the cycle up by that power of $A$.

The bounded ridge on $\mathcal{I}(b)\cap\mathcal{I}(B)$ is sent to itself by $B$. One checks that
$$q_{\infty}\stackrel{B}{\longrightarrow}B(q_{\infty})\stackrel{B}{\longrightarrow}B^2(q_{\infty})=b(q_{\infty})\stackrel{B}{\longrightarrow}q_{\infty}.$$
This clearly gives a cycle transformation of order 3 preserving that ridge, so we get the relation $B^3=id$.

In Tables  \ref{table:ridge344} and \ref{table:circle344}, we give the information about infinite ridge cycles and infinite ridge relations of the group $\Delta_{3,4,4;\infty}$. In Table \ref{table:ridge344}, for example, $(r_1, bAB\cap Bab)$ means that the isometric sphere $\mathcal{I}(bAB)$ of
$bAB$ and  the isometric sphere $\mathcal{I}(Bab)$ of  $Bab$ intersect in a  ridge $r_1$.

From Tables  \ref{table:ridge344}, \ref{table:circle344}, and the information about bounded ridges, we can use the Poincar\'e polyhedron theorem to see that
$D_{S^*}$ is in fact the Ford domain. Furthermore, we have the following proposition.

\begin{prop}
No  elliptic element of $\Gamma_1$ fixes any point in $\partial {\bf H}^2_{\mathbb C}$ and the only parabolic elements in $\Gamma_1$ are conjugates of powers of $A$.
\end{prop}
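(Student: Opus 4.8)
The plan is to read off the statement directly from the structure of the Ford domain $D_{S^*}$ together with the Poincaré polyhedron theorem, which has just been invoked. Once $D_{S^*}$ is known to be a fundamental domain (modulo $\Gamma_\infty = \langle A\rangle$) for $\Gamma_1$, the combinatorics of its facets and the ridge cycles give a complete presentation of $\Gamma_1$, and with it a complete list of the conjugacy classes of torsion and of parabolic subgroups. So the proof should simply extract the two assertions from the cycle data in Tables \ref{table:ridge344} and \ref{table:circle344} and from the bounded-ridge analysis.

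For the first assertion, that no elliptic element of $\Gamma_1$ fixes a point of $\partial\mathbf{H}^2_{\mathbb C}$: the Poincaré theorem tells us that every torsion conjugacy class in $\Gamma_1$ arises from a ridge cycle of $D_{S^*}$, i.e. is conjugate into the cyclic stabilizer of some ridge. The only ridge cycle of finite order among the \emph{bounded} ridges is the one on $\mathcal{I}(b)\cap\mathcal{I}(B)$, which is stabilized by $B$ and gives $B^3=\mathrm{id}$; the corresponding cyclic group $\langle B\rangle$ has order $3$, and $B$ (being $I_2I_3$ with $(I_2I_3)^3=\mathrm{id}$, having a fixed point inside $\mathbf{H}^2_{\mathbb C}$ coming from the bounded ridge) is a \emph{special elliptic} element with no fixed point on $\partial\mathbf{H}^2_{\mathbb C}$. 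The \emph{infinite} ridge cycles, being cycles through ridges meeting $\partial\mathbf{H}^2_{\mathbb C}$, close up either trivially or by a power of $A$, hence contribute only relations that are consequences of $A$ being parabolic — no new torsion. Therefore every torsion element of $\Gamma_1$ is conjugate to a power of $B$, and none of these fixes a point at infinity. A subtlety I should address explicitly: an elliptic element fixing a point of $\partial\mathbf{H}^2_{\mathbb C}$ would have to be a boundary elliptic (finite order, one fixed point on the sphere), and one must rule this out not just for "named" generators but for all of $\Gamma_1$; this is exactly what the cycle-transformation classification of torsion provides, since such an element would be conjugate to a power of $B$, contradiction.

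For the second assertion, that the only parabolics in $\Gamma_1$ are conjugates of powers of $A$: again by the Poincaré polyhedron theorem the cusp cross-sections of $\mathbf{H}^2_{\mathbb C}/\Gamma_1$ correspond to the $\Gamma_1$-orbits of fixed points of maximal parabolic subgroups, and each such subgroup is conjugate into the stabilizer of an ideal vertex of $D_{S^*}$. Here $D_{S^*}$ has a single ideal vertex orbit, namely $q_\infty$, with $\Gamma_\infty\cap\Gamma_1=\langle A\rangle$ (from the matrix \eqref{eq-a-b}, $A$ is unipotent with trace $3$, hence parabolic, and generates the full stabilizer of $q_\infty$ in $\Gamma_1$ — this is forced by the structure of the side pairings, since no side pairing fixes $q_\infty$ and the translates $A^k\mathcal{I}(g)$ account for all sides). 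Consequently every parabolic element of $\Gamma_1$ has its fixed point in the $\Gamma_1$-orbit of $q_\infty$, so is conjugate into $\langle A\rangle$, i.e. is a conjugate of a power of $A$.

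The main obstacle is not any single computation but the \emph{bookkeeping}: one must be sure that the ridge-cycle list in Tables \ref{table:ridge344}–\ref{table:circle344}, together with the one bounded-ridge cycle, is genuinely complete, so that the induced presentation of $\Gamma_1$ really has $\langle A\rangle$ and $\langle B\rangle$ (up to conjugacy) as the \emph{only} sources of parabolic and torsion elements respectively. Concretely, I would (i) verify via \eqref{eq-a-b} that $A$ is parabolic and $B$ has order $3$ and is special elliptic; (ii) recall from the proof of Theorem \ref{thm:Ford} that $D_{S^*}$ is the Ford domain and that its only ideal vertex is $q_\infty$ with stabilizer $\langle A\rangle$; (iii) invoke the Poincaré polyhedron theorem to conclude that, up to conjugacy, every elliptic cyclic subgroup of $\Gamma_1$ is a ridge-cycle group — all of finite order arising from the bounded ridge $\mathcal{I}(b)\cap\mathcal{I}(B)$, hence $\langle B\rangle$ — and every maximal parabolic subgroup is $\langle A\rangle$ up to conjugacy; and (iv) observe $B$ fixes no point of $\partial\mathbf{H}^2_{\mathbb C}$ while $A$'s unique fixed point there is $q_\infty$, which finishes both claims.
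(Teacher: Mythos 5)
Your overall strategy --- read the torsion and parabolic conjugacy classes off the ridge cycles of the Ford domain via the Poincar\'e polyhedron theorem --- is the same as the paper's, and your treatment of the parabolic half (one ideal vertex orbit at $q_\infty$ with stabilizer $\langle A\rangle$, trivial stabilizers at the other ideal vertices) matches what the paper does. The elliptic half, however, has a genuine gap. You assert that the infinite ridge cycles ``close up either trivially or by a power of $A$, hence contribute \ldots\ no new torsion,'' and conclude that every torsion element of $\Gamma_1$ is conjugate to a power of $B$. That is false for this group: Table \ref{table:circle344} shows that the infinite ridge cycles $R_1$, $R_2$, $R_8$ and $R_{10}$ produce the relations $(aB)^4=id$, $(ab)^4=id$ and $(AB)^4=id$ (reflecting $(I_3I_1)^4=(I_1I_3I_2I_3)^4=id$), so $\Gamma_1$ contains order-$4$ elliptic elements that are not conjugate to any power of $B$. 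Your classification of torsion is therefore incomplete, and the proposition does not follow from what you have written.

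The paper's proof has to deal with exactly these extra classes: it lists the elliptic conjugacy classes as powers of $B$, of $ab$ and of $aB$, and then argues (i) the regular elliptic powers have three distinct eigenvalues, hence no null eigenvector and no fixed point on $\partial\mathbf{H}^2_{\mathbb C}$, and (ii) the non-regular powers $(ab)^2$ and $(aB)^2$ are complex reflections in \emph{points}, whose polar complex line is entirely positive and so misses $\overline{\mathbf{H}^2_{\mathbb C}}$. Step (ii) is where the real content lies: a special elliptic element that is a reflection in a complex \emph{line} would fix a whole $\mathbb{C}$-circle at infinity, so ``has a fixed point inside $\mathbf{H}^2_{\mathbb C}$'' does not by itself exclude boundary fixed points. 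This also undercuts your handling of $B$: you call it special elliptic and infer the absence of boundary fixed points from its interior fixed point, whereas the paper notes that $B$ is \emph{regular} elliptic of order $3$, which is what actually forces it to fix nothing on $\partial\mathbf{H}^2_{\mathbb C}$. To repair your argument you would need to add the order-$4$ cycle classes to your list and then run the regular-elliptic/point-reflection dichotomy on all of their powers.
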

\begin{proof}
From the Poincar\'e ridge cycles, we know that any elliptic element in the group $\Gamma_1$ must be conjugate to a power of $B$, a power of $ab$ and a power of $aB$.  $B$ is a regular elliptic of order 3, thus it does not fix any point in $\partial {\bf H}^2_{\mathbb C}$. As for $ab$ and  $aB$, one can see that the nontrivial, nonregular elliptic elements are $(ab)^2$ and  $(aB)^2$. But these are reflections in points, which have no fixed points in $\partial {\bf H}^2_{\mathbb C}$.

It is easy to see that the ideal vertices all have trivial stabilizers. The only parabolic elements in $\Gamma_1$ are elements in the cyclic subgroup generated by $A$.
\end{proof}
The ideal boundary of $D_{S^*}$ is  $E$, that is $E=\partial_{\infty}D_{S^*}$, and the boundary of the 3-manifold  $E$ is $C$, that  is $C=\partial E$. $C$ is an infinite annulus.

\begin{figure}[htbp]
	\center{\scalebox{0.3}[0.3]{\includegraphics {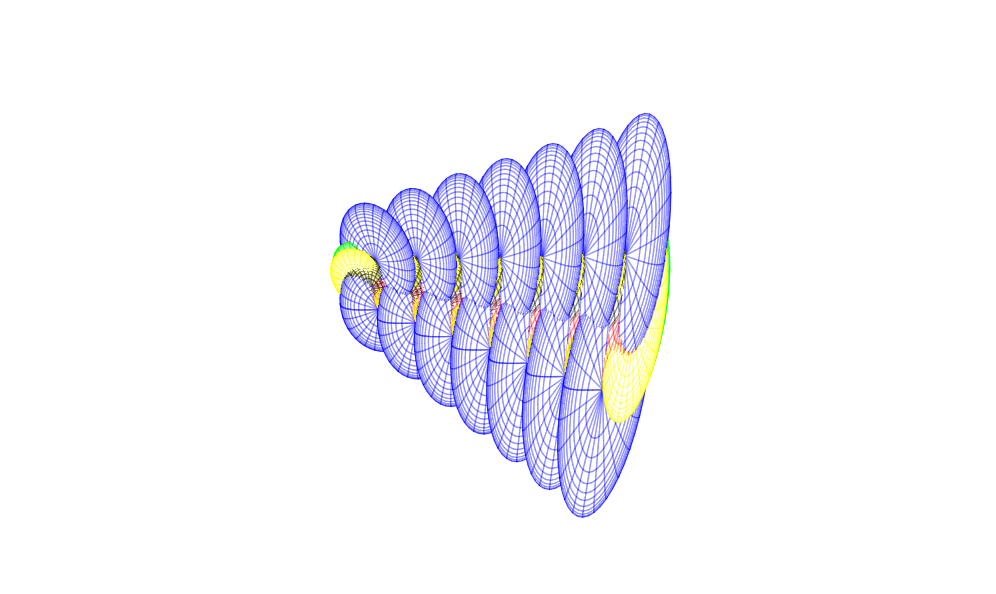}}}
	\caption{Realistic view  of the ideal boundary of the Ford domain of $\Delta_{3,4,4;\infty}$.}
\label{figure:front4}
\end{figure}


We will show $C$ consists of spinal spheres  of  the following elements:

  1. $A$-translations of $\{B, b\}$, which are the  blue  colored polygons in Figure \ref{figure:front4}. Each of the spinal spheres of  $\{B, b\}$ contributes a tetradecagon of the ideal boundary of the Ford domain. See Table \ref{table:adjacent344} for more details;

  2. $A$-translations of $\{bAB, baB\}$, which are the   red   colored polygons in Figure \ref{figure:front4}. Each of the spinal spheres of  $\{bAB, baB\}$ contributes two triangles  of the  ideal  boundary of the Ford domain;

   3. $A$-translations of $\{bAb, BaB\}$, which are the  green colored polygons in Figure \ref{figure:front4}.   Each of the spinal spheres of $\{bAb, BaB\}$ contributes a pentagon   of the  ideal  boundary of the Ford domain;

   4. $A$-translations of $\{bab, BAB\}$, which are the  yellow  colored polygons in Figure \ref{figure:front4}. Each of the spinal spheres of $\{bab, BAB\}$ contributes two triangles of the  ideal  boundary of the Ford domain;

   5. $A$-translations of  $\{Bab, BAb\}$, which are the  black colored polygons in Figure \ref{figure:front4}.  Each of the spinal spheres of $\{Bab, BAb\}$ contributes a pentagon  of the   ideal  boundary of the Ford domain.

Figure \ref{figure:front4}  is a realistic  view  of the ideal boundary of the Ford domain of $\Delta_{3,4,4;\infty}$. This figure is just for motivation, and we need to show the figure is correct.

In Table \ref{table:adjacent344}, we give the detailed information of the contributions of the spinal spheres for the ideal boundary of the Ford domain.  $(bAb)_1$ and $(bAb)_2$ means the spinal sphere of $bAb$ contributes two parts of the boundary of   $D_{S^*} \cap \partial {\bf H}^2_{\mathbb C}$.  We see that  the ideal boundary  of the Ford domain of the group $\Delta_{3,4,4;\infty}$ is the complement of a $D^2 \times (-\infty, \infty)$ in $\mathbb{C} \times \mathbb{R} = \partial {\bf H}^2_{\mathbb C}- \{q_{\infty}\}$.

From Table \ref{table:adjacent344}, we get Figure \ref{figure:boundary4f}, which gives us an abstract picture of the boundary of the ideal boundary of Ford domain of the complex hyperbolic triangle group  $\Delta_{3,4,4;\infty}$. That is,  Figure \ref{figure:boundary4f} gives us an abstract picture of $C$. Since $(AB)^4=id$, the isometric sphere $\mathcal{I}(bab)$ of  $bab$ is the same as the isometric sphere of  $ABABa$. So in Figure \ref{figure:boundary4f}, we only see one part which is labeled by $bab$, even though we have claimed that the isometric sphere of $bab$ contribute two triangles of the boundary of the ideal boundary of  the Ford domain.
Note that $A$ acts on the picture horizontally to the left, and two zigzag lines with angles $\frac{3\pi}{4}$ to the positive horizontal axis are glued together to get the  infinite annulus $C$.

\begin{table}[htbp]\label{table: vertices}
		\begin{tabular}{|c|c|}
			\hline
			Spinal sphere & Adjacent spinal spheres in clockwise order  \\
			\hline
$(bAb)_1$ & $b$, $Aba$, $AbaBa$  \\
			\hline
$(bAb)_2$ & $B$, $BAb$, $ABa$  \\
			\hline
$(BaB)_1$ & $B$, $aBA$, $aBAbA$  \\
			\hline
$(BaB)_2$ & $b$, $baB$, $abA$  \\
			\hline
$(bab)_1$ & $B$, $Bab$, $aBA$  \\
			\hline
$(bab)_2$ & $b$, $abA$, $abABA$  \\
			\hline
$(BAB)_1$ & $b$, $bAB$, $Aba$  \\
			\hline
$(BAB)_2$ & $B$, $ABa$, $ABaba$  \\
			\hline
$Bab$ & $b$, $aBA$, $bab$, $B$, $bAB$  \\
			\hline

$BAb$ & $b$, $ABa$, $bAb$, $B$, $baB$ \\
			\hline
 $bAB$ & $B$, $Aba$, $Ababa$, $b$, $Bab$ \\
			\hline
			$(baB)$ & $B$, $abA$, $BaB$, $b$, $BAb$ \\
			\hline
$B$ & $Aba$, $bAB$, $Bab$, $bab$, $aBA$, $abAbA$, $aBAbA$, \\
& $abA$, $baB$, $BAb$,   $bAb$, $ABa$, $Ababa$, $ABaba$ \\
			\hline
$b$ & $bab$, $abABA$, $aBA$, $Bab$, $bAB$, $BAB$, $Aba$,   \\
 & $bAb$, $AbaBa$, $ABa$, $BAb$, $baB$, $abAbA$, $abA$  \\
			\hline
		\end{tabular}
\caption{Adjacent relations in the ideal boundary of the Ford domain boundary of  $\Delta_{3,4,4;\infty}$.}
\label{table:adjacent344}
\end{table}



\begin{figure}[htbp]
	\scalebox{0.25}[0.25]{\includegraphics {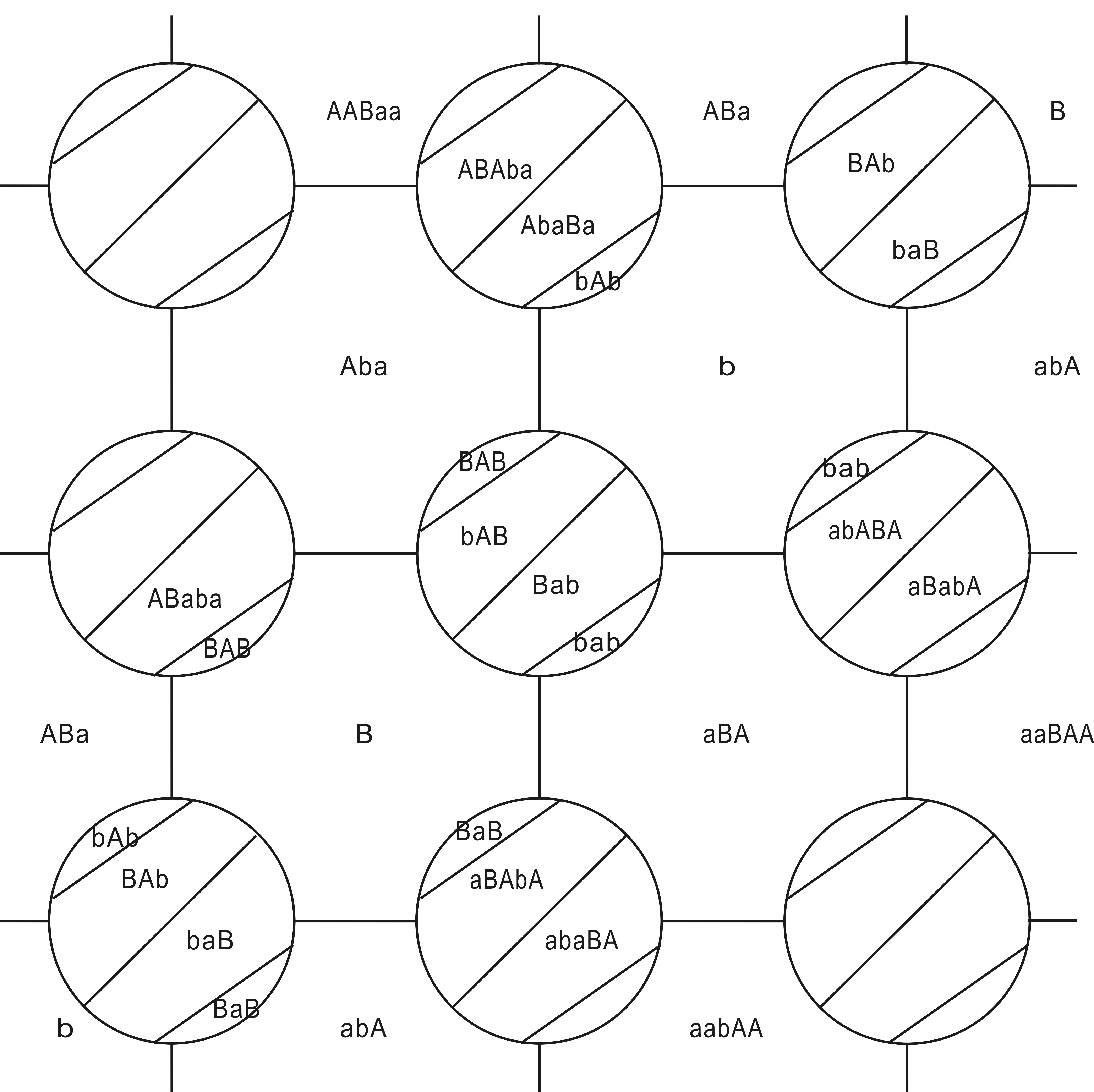}}
	\caption{A combinatorial picture of the boundary of the ideal boundary of the Ford domain of the complex hyperbolic triangle group  $\Delta_{3,4,4;\infty}$.}
\label{figure:boundary4f}
\end{figure}

\subsection{A 2-spine of the manifold at infinity of the  complex hyperbolic triangle group $\Delta_{3,4,4;\infty}$}

In this subsection, we give a  canonical 2-spine $S$ of the manifold at infinity of the  complex hyperbolic triangle group  $\Delta_{3,4,4;\infty}$,  from which we can compute
the fundamental group of $M$, our manifold at infinity of the complex hyperbolic triangle group  $\Delta_{3,4,4;\infty}$.


The ideal  boundary $D_{S^*} \cap (\partial{\mathbf{H}^2_{\mathbb C}}-\{q_{\infty}\})$  of the Ford domain of $\Delta_{3,4,4;\infty}$ is a solid cylinder homeomorphic to $(\mathbb{C}- int (\mathbb{D})) \times \mathbb{R}^{1}$, where $\mathbb{D}$ is the unit disk in $\mathbb{C}$. Where Figure \ref{figure:boundary4f} is an abstract picture of $\partial  \mathbb{D} \times \mathbb{R}^{1}$, that is,  an abstract picture of $C=\partial E$.

Since  we have exact one parabolic fixed point, which is  $q_{\infty}$. In other words, there is no parabolic fixed point in
$\partial  \mathbb{D} \times \mathbb{R}^{1}$.  The 3-manifold $M$ is a quotient space of   $D_{S^*} \cap (\partial{\mathbf{H}^2_{\mathbb C}}-\{q_{\infty}\})$,  where we first consider the equivalence  given by the $A$-action,  and then the equivalence  given by side-pairings of the spinal spheres.

Each infinite ridge in the boundary of the Ford domain gives us an edge in the boundary 3-manifold. We take a fundamental domain of $A$ acts on the infinite annulus, which in Figure \ref{figure:boundary4forientation} is the largest region  bounded by the edges with labels. It consists of the following parts in Figure \ref{figure:boundary4f}:

(1). Two big tetradecagons correspond to the spinal spheres of the elements $\{B, b\}$;

(2). Four triangles correspond to the spinal spheres of the elements $\{BAB, bab\}$, $\{bAb, BaB\}$;

(3). Four  pentagons correspond to the spinal spheres of the elements $\{bAB, baB\}$, $\{BAb, Bab\}$.

 Note that we label the edges from 1 to 45, but in fact the edge $e_{35}$ and the edge $e_{22}$ are glued together in the infinite annulus $C$ of the boundary of ideal boundary of  the Ford domain. The same as edges $e_{36}$ and $e_{21}$, edges $e_{37}$ and $e_{20}$. In total, there are 42 edges.

Note also  that the orientations of edges in Figure \ref{figure:boundary4forientation} can be obtained from the ridge relations with taking care of the neighbouring isometric spheres.  For example in the procedure of  $(e_{44}, aBAbA\cap B) \rightarrow (e_{33}, b \cap AbaBa) \rightarrow (e_{18}, abA\cap baB)$, the positive end of $e_{44}$ is adjacent to  the spinal sphere of $abA$,  and when we use the map $B$, we get $abAB^{-1}=abAb$, but $abAb$ and $bAb$ has the same spinal sphere, so the  positive end of $e_{33}$  is adjacent to  the spinal sphere $bAb$.

\begin{figure}[htbp]
	\scalebox{0.20}[0.20]{\includegraphics {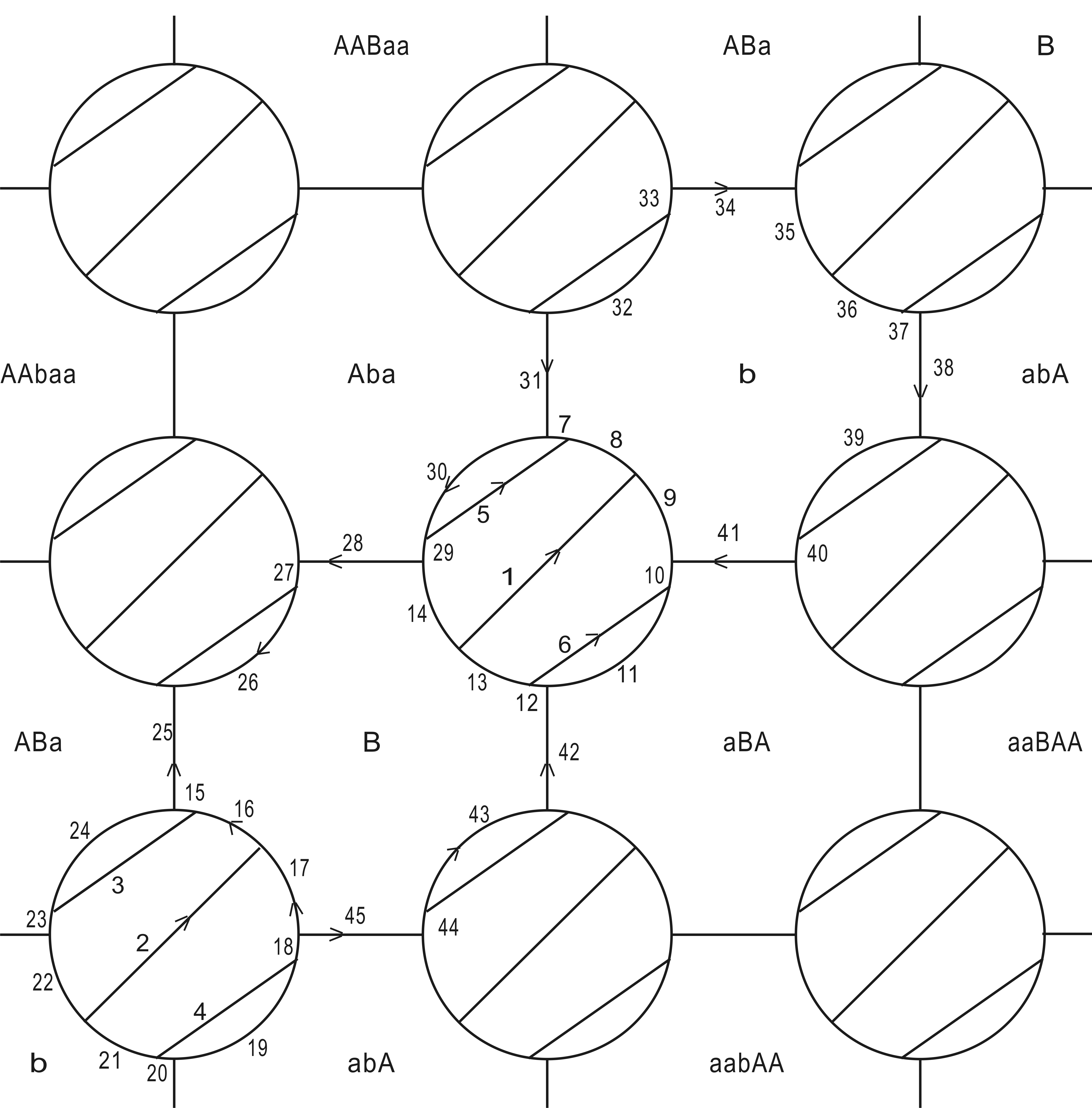}}
	\caption{Fundamental domain and edge cycles in  the boundary of the ideal boundary of the Ford domain of the complex hyperbolic triangle group $\Delta_{3,4,4;\infty}$.}
\label{figure:boundary4forientation}
\end{figure}

\begin{table}[htbp]\label{table: vertices}
	\begin{tabular}{|c|c|}
		\hline
		Number& Ridge relation   \\
		\hline
		$R_{1}$ & $(r_1, bAB\cap Bab)\rightarrow (r_{23}, BAb\cap ABa)\rightarrow (r_{44}, aBAbA\cap B)$ \\
		& $\rightarrow (r_{33}, b \cap AbaBa) \rightarrow (e_{18}, abA\cap baB) \rightarrow (r_1, bAB\cap Bab)$
		\\
		\hline
		$R_{2}$ & $ (r_2, BAb\cap baB)\rightarrow (r_{29}, bAB\cap Aba)\rightarrow (r_{40}, abABA\cap b)$ \\
		&    $\rightarrow (r_{27}, B\cap ABaba)\rightarrow (r_{10},  aBA\cap Bab)  \rightarrow (r_2, BAb\cap baB)$
		\\
		\hline
		$R_{3}$ & $(r_3, bAb\cap BAb)\rightarrow (r_{13}, Bab\cap B)\rightarrow (r_{37}, b\cap abAbA)$\\
		&$ \rightarrow (r_3, bAb\cap BAb)$
		\\
		\hline
		$R_{4}$ & $(r_4, baB\cap BaB)\rightarrow (r_{15}, bAb\cap B)\rightarrow (r_{8}, b\cap bAB)$\\
		&$ \rightarrow (r_4, baB\cap BaB)$
		 \\
		\hline
		$R_{5}$ & $(r_5, BAB\cap bAB)\rightarrow (r_{36}, baB\cap b)\rightarrow (r_{12},  B\cap bab) $\\
		&$\rightarrow (r_5, BAB\cap bAB)$
		\\
		\hline
		$R_{6}$ & $(r_6, Bab\cap bab)\rightarrow (r_{7}, BAB\cap b)\rightarrow (r_{16}, B\cap BAb)$\\
		&$ \rightarrow (r_6, Bab\cap bab)$
		\\
		\hline
		$R_{7}$ & $(r_9, Bab\cap b)\rightarrow (r_{25}, B\cap ABa)\rightarrow (r_{42}, aBA\cap B) $\\
		&$\rightarrow(r_{35}, b \cap BAb) \rightarrow (r_{22}, b\cap BAb) \rightarrow (r_9, Bab\cap b)$
		\\
		\hline
		$R_{8}$ & $(r_{11}, bab\cap aBA)\rightarrow (r_{26}, Ababa\cap B)\rightarrow (r_{41}, b\cap aBA)$\\
		&$ \rightarrow(r_{28}, Aba \cap B) \rightarrow (r_{39}, b\cap bab) \rightarrow (r_{30}, Aba\cap BAB) $\\
		&$\rightarrow (r_{11}, bab\cap aBA)$
		\\
		\hline
		$R_{9}$ & $(r_{14}, bAB\cap B)\rightarrow (r_{38}, b\cap abA)\rightarrow$ \\
		& $(r_{31}, Aba\cap b) \rightarrow (r_{17}, B \cap baB) \rightarrow (r_{14}, bAB\cap B)$
		\\
		\hline
		$R_{10}$ & $(r_{19}, abA\cap BaB)\rightarrow (r_{24}, bAb\cap ABa)\rightarrow (r_{43}, BaB\cap B) $\\
		& $\rightarrow(r_{34}, b\cap ABa) \rightarrow (r_{45}, abA\cap B) \rightarrow (r_{32}; b\cap bAb)$\\
		&$ \rightarrow (r_{19}, abA\cap BaB)$
		\\
		\hline
	\end{tabular}
	\caption{Infinite ridges of the Ford domain of $\Delta_{3,4,4;\infty}$.}
	\label{table:ridge344}
\end{table}

\begin{table}[htbp]\label{table: vertices}
	\begin{tabular}{|c|c|c|}
		\hline
		Number & Ridge cycle & Cycle relation \\
		\hline
		$R_{1}$ &  $(baB)*a*B*a*(Bab)$  &$(aB)^{4}=id$\\
		\hline
		$R_{2}$ &   $(Bab)*a*b*a*(baB)$ &$ (ab)^{4}=id$\\
		\hline
		$R_{3}$ & $(BaB)*B*(BAb)$  &$ B^{3}=id$
		\\
		\hline
		$R_{4}$ &  $(bAB)*B*(BaB)$ &$B^{3}=id$\\
		\hline
		$R_{5}$  & $(bab)*b*(bAB)$ & $ b^{3}=id$
		\\
		\hline
		$R_{6}$& $(BAb)*b*(bab)$&  $ b^{3}=id$
		\\
		\hline
		$R_{7}$  & $(BAb)*B*a*b$ & $id$\\
		\hline
		$R_{8}$  & $(BAB)*A*B*A*B*A$ &$ (AB)^{4}=id$\\
		\hline
		$R_{9}$ & $(baB)*b*A*B$  &$id$\\
		\hline
		$R_{10}$ & $a*B*a*B*a*(BaB)$  &$(aB)^{4}=id$\\
		\hline
	\end{tabular}
	\caption{Infinite ridge relations  of the Ford domain of $\Delta_{3,4,4;\infty}$.}
	\label{table:circle344}
\end{table}

The manifold at infinity of $\Gamma_{1}$ is  the quotient space of $E=D_{S^*} \cap (\partial{\mathbf{H}^2_{\mathbb C}}-\{q_{\infty}\})$ by the equivalence  given by $A$-action and side-pairings of the spinal spheres. That is, if the 3-manifold at infinity is $M$, then $M=E/\sim$, where  $\sim$ is the equivalence  given by $A$-action and side-pairings of the spinal spheres.
 Now $\partial  \mathbb{D} \times \mathbb{R}^{1}$ is a deformation retraction of $(\mathbb{C}- int (\mathbb{D})) \times \mathbb{R}^{1}$. That is,  $C= \partial E$ is a deformation retraction of  $E$.  The deformation is $A$-equivalent, we also have the quotient space $S$ of  $C/\sim$, here $\sim$ is the equivalence  given by $A$-action and side-pairings of the spinal spheres. So we get a  2-spine $S$  of our 3-manifold at infinity of $\Delta_{3,4,4;\infty}$.
From the side-pairing and $A$-translation, we get the equivalence classes  of the oriented edges in Figure \ref{figure:boundary4forientation}, then we get Table \ref{table:edgerelation4}, where Tables \ref{table:ridge344} and \ref{table:circle344} are involved.
Note that $S$ is canonical for the  boundary 3-manifold of $\Delta_{3,4,4;\infty}$ from the view point of Ford domain.


\begin{table}[htbp]\label{table: 4edge}
		\begin{tabular}{|c|c|}
			\hline
			Edge & Equivalence class \\
			\hline
$E_1$ & $e_{1}$, $e_{23}$, $e_{44}$, $e_{33}$, $e_{18}$\\
			\hline
$E_2$ & $e_{2}$, $e_{29}$, $e_{40}$, $e_{27}$, $e_{10}$\\
			\hline
$E_3$ & $e_{3}$, $e_{13}$, $e_{37}$, $e_{20}$\\
			\hline
$E_4$ & $e_{4}$, $e_{8}$, $e_{15}$\\
			\hline
$E_5$ & $e_{5}$, $e_{36}$, $e_{21}$, $e_{12}$\\
			\hline
$E_6$ & $e_{6}$, $e_{7}$, $e_{16}$\\
			\hline
$E_7$ & $e_{9}$, $e_{42}$, $e_{35}$, $e_{25}$, $e_{22}$\\
\hline
$E_8$ & $e_{11}$, $e_{26}$, $e_{41}$, $e_{28}$, $e_{39}$, $e_{30}$\\
			\hline
$E_9$ & $e_{14}$, $e_{38}$, $e_{31}$, $e_{17}$\\
			\hline
$E_{10}$ & $e_{19}$, $e_{24}$, $e_{34}$, $e_{43}$, $e_{32}$, $e_{45}$\\
			\hline
		\end{tabular}
\caption{Equivalence classes of edges  in the boundary of the ideal boundary of the Ford domain of $\Delta_{3,4,4;\infty}$.}
\label{table:edgerelation4}
\end{table}

Moveover, in Table \ref{table:vertex4}, we give the   equivalence relations of vertices, we orient each edge $E_{i}$, where $E_{i}^{-}$ and $E_{i}^{+}$  are the negative vertex and positive vertex of $E_{i}$ respectively.

\begin{table}[htbp]
		\begin{tabular}{|c|c|}
			\hline
			Vertex & Equivalence class   \\
			\hline
$V_1$ & $E_{1}^{-}$, $E_{3}^{-}$, $E_{9}^{-}$, $E_{10}^{-}$\\
			\hline
$V_2$ & $E_{1}^{+}$, $E_{4}^{+}$, $E_{7}^{-}$, $E_{10}^{+}$\\
			\hline
$V_3$ & $E_{2}^{+}$, $E_{6}^{+}$, $E_{9}^{+}$, $E_{8}^{-}$\\
			\hline
$V_4$ & $E_{8}^{+}$, $E_{7}^{+}$, $E_{2}^{-}$, $E_{5}^{-}$\\
			\hline
$V_5$ & $E_{3}^{+}$, $E_{4}^{-}$, $E_{6}^{-}$, $E_{5}^{+}$\\
			\hline
		\end{tabular}
\caption{Equivalence relations on  the vertices in Figure \ref{figure:boundary4forientation}.}
\label{table:vertex4}
\end{table}

From Tables \ref{table:edgerelation4}  and \ref{table:vertex4}, we get Figure \ref{figure:1-skeleton4f}.  Which is the 1-skeleton of the canonical 2-spine $S$ of our  3-manifold at infinity of  $\Delta_{3,4,4;\infty}$.  Then the 2-spine $S$ is obtained from the graph in Figure \ref{figure:1-skeleton4f} by attaching a set of disks according to Table \ref{table:4disk}. Now Table  \ref{table:4disk}  can be obtained from Figure \ref{figure:boundary4forientation}.

Note that we just give some of the labels  of spinal spheres in Figure \ref{figure:boundary4f}, and we only give some of the orientations of the edges in Figure \ref{figure:boundary4forientation}, both of these information  can obtained from Tables \ref{table:adjacent344}  and \ref{table:edgerelation4}.

From Figure \ref{figure:1-skeleton4f}, we consider the fundamental group of the graph. We oriented the edges of the graph where both edges $E_1$ and $E_{10}$ are from $V_1$ to $V_2$. Edge $E_2$ is from $V_4$ to $V_3$, and edge $E_8$ is from $V_3$ to $V_4$.  Edge $E_3$ is from $V_1$ to $V_5$, $E_4$ is from $V_5$ to $V_2$. Edge $E_5$ is from $V_4$ to $V_5$, and edge $E_6$ is from $V_5$ to $V_3$. Edge $E_7$ is from $V_2$ to $V_4$, and edge $E_9$ is from $V_1$ to $V_3$. We let
 $\phi_1=E_4*E_7*E_5$,
 $\phi_2=E^{-1}_5*E_2*E^{-1}_6$,
$\phi_3=E_6*E^{-1}_9*E_3$,
  $\phi_4=E^{-1}_3*E_1*E^{-1}_4$,
$\phi_5=E_6*E_8*E_5$,
$\phi_6=E_4*E^{-1}_{10}*E_3$.
Here $E^{-1}_i$ is the inverse path of $E_i$.
And it is easy to see $\phi_{i}$, $1 \leq i \leq 6$, is a  generator set of the fundamental group  of the graph based at the vertex $V_{5}$.

The boundary of the disk corresponds to the side-pairing $baB$ is  $E_{4}*E^{-1}_{10}*E_{3}$, which can be seen from Figure \ref{figure:boundary4forientation} and Table \ref{table:edgerelation4}, for more details, see Table \ref{table:4disk}.

\begin{figure}[htbp]
	\scalebox{0.3}[0.3]{\includegraphics {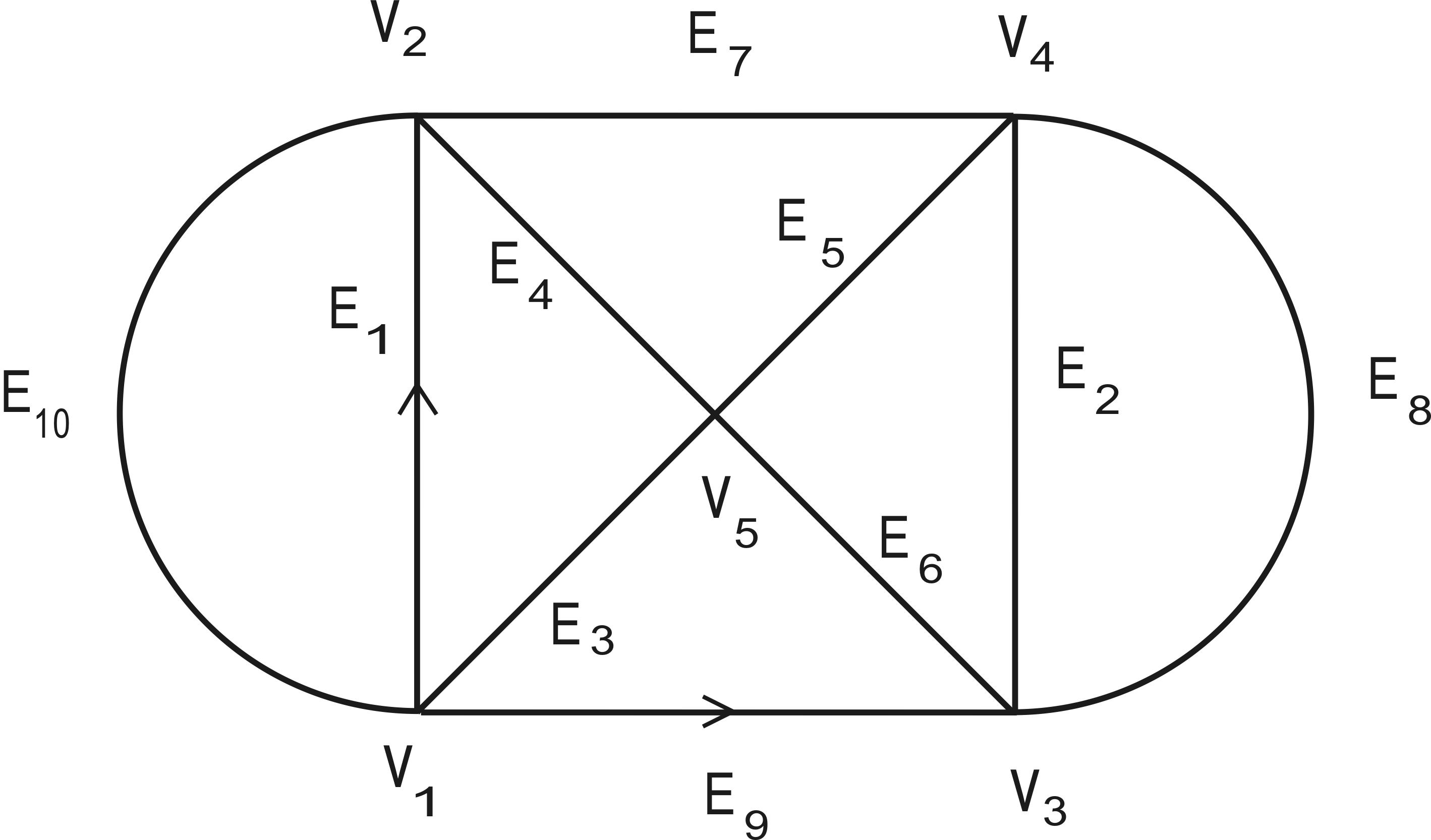}}
	\caption{The 1-skeleton of the canonical 2-spine $S$ of the 3-manifold at infinity of the complex hyperbolic triangle group  $\Delta_{3,4,4;\infty}$.}
\label{figure:1-skeleton4f}
\end{figure}

\begin{table}[htbp]
		\begin{tabular}{|c|c|}
			\hline
			Disk & Boundary path \\
			\hline
$b$ &
$E_{10}*E^{-1}_{1}*E_{10}*E_{7}*E_{5}*E^{-1}_{3}*E_{9}*E_{8}*E_{2}*E_{8}*$\\
 & $E^{-1}_{7}*E^{-1}_{4}*E_{6}*E^{-1}_{9}$
\\
			\hline
$BAB$ &  $E^{-1}_{8}*E^{-1}_{6}*E^{-1}_{5}$\\
			\hline
$bAB$ &  $E_{5}*E_{4}*E^{-1}_{1}*E_{9}*E^{-1}_{2}$ \\
			\hline
$Bab$ &  $E_{1}*E_{7}*E_{2}*E^{-1}_{6}*E^{-1}_{3}$\\
			\hline
$baB$ &  $E_{4}*E^{-1}_{10}*E_{3}$ \\
			\hline
		\end{tabular}
\caption{Boundary of the attaching disks in the 2-spine $S$  of the manifold at infinity of the complex hyperbolic triangle group  $\Delta_{3,4,4;\infty}$.}
\label{table:4disk}
\end{table}

\begin{table}[htbp]
		\begin{tabular}{|c|c|}
			\hline
			Disk& Relation   \\
			\hline
$b$ &  $\phi_3\phi^{-1}_6\phi^{-1}_4\phi^{-1}_6\phi_1\phi^{-1}_3\phi_5\phi_2\phi_5\phi^{-1}_1$
\\
			\hline
$BAB$ & $\phi_5$\\
			\hline
$bAB$ &  $\phi^{-1}_4\phi^{-1}_3\phi^{-1}_2$\\
			\hline
$Bab$ & $\phi_4\phi_1\phi_2$\\
			\hline
$baB$ & $\phi_6$\\
			\hline
		\end{tabular}
\caption{Relation of the attaching disks  in the 2-spine $S$  of the manifold at infinity of the  hyperbolic triangle group  $\Delta_{3,4,4;\infty}$.}
\label{table:4relation}
\end{table}

 In Table \ref{table:4relation}, we give the relations of the attaching disks  in the 2-spine $S$  of the manifold at infinity, which can be obtained from Table \ref{table:4disk}.
 From Table \ref{table:4relation} we get a presentation of the fundamental group of the 2-spine $S$, and so the fundamental group of $M$, the manifold at infinity of $\Delta_{3,4,4;\infty}$.  Then use Magma, we can simplify it, $$\pi_{1}(S)=\pi_{1}(M)=\langle x_1, x_2 | x_2^{-1} x_1^{-1}x_2^{2}x_1^{-1}x_2^{-1}x_1x_2x_1x_2x_1\rangle.$$

Let $m038$ be the one-cusped hyperbolic 3-manifold in the Snappy  Census \cite{CullerDunfield:2014}, which has volume 3.17729327860...
and $$\pi_{1}(m038)=\langle x, y |x^3y^{-1}x^{-1}y^3x^{-1}y^{-1}\rangle.$$

Using Magma, it is easy to see the groups above are isomorphic. This finishes the proof of Theorem \ref{thm:main} for  $\Delta_{3,4,4;\infty}$.

\section{Manifold at infinity of the complex hyperbolic triangle group  $\Delta_{3,4,6;\infty}$}

In this section, we study the manifold  at infinity of the even subgroup of the complex hyperbolic triangle group  $\Delta_{3,4,6;\infty}$. We follow the notation used in the Section 5.  Let $I_{1}$, $I_{2}$ and $I_{3}$ be the matrices in Section 3. We have $I_{1}I_{2}$ is parabolic, $(I_{2}I_{3})^3=(I_{3}I_{1})^4=id$, and $(I_{1}I_{3}I_{2}I_{3})^6=(I_{1}I_{2}I_{3}I_{2})^6=id$.
Let $A=I_{1}I_{2}$ and  $B=I_{2}I_{3}$, then $B^3=id$, $(AB)^4=id$ and  $(AB^2)^6=id$.  Let $\Gamma_{1}$ be the even subgroup of  $\Delta_{3,4,6;\infty}$ generated by $A$ and $B$,  and we write $a,b$ for $A^{-1}, B^{-1}$ respectively.

\subsection{The Ford domain of the complex hyperbolic triangle group  $\Delta_{3,4,6;\infty}$}

 We omit the detailed  information about finite ridges, even through they are crucial to use the Poincar\'e polyhedron theorem, but these are similar to Section \ref{section:comb344}.
 We study the ideal boundary  of the  Ford domain of the complex hyperbolic triangle group  $\Delta_{3,4,6;\infty}$ centered at the fixed point $q_{\infty}$ of the parabolic element $A$. We give the detailed information on spinal spheres and  infinite ridges, which is enough to get the 3-manifold  at infinity.

 Let $S^*$ be the set of  the  essential words $$\{B, b, BAB, bab, BAb, Bab, BaB, bAb, bAB, baB, BaBaB, bAbAb \}$$ and $D_{S^*}$ be  the intersection of the exteriors of the isometric spheres of the essential words in $S^*$ and all their translates by powers of $A$. The ideal boundary of the (partial) Ford domain  $D_{S^*}$ is  $E$, that is $E=\partial_{\infty}D_{S^*}$, and the boundary of the 3-manifold  $E$ is $C$, that  is $C=\partial E$. $C$ is an infinite annulus.

\begin{figure}[htbp]
	\scalebox{0.25}[0.25]{\includegraphics {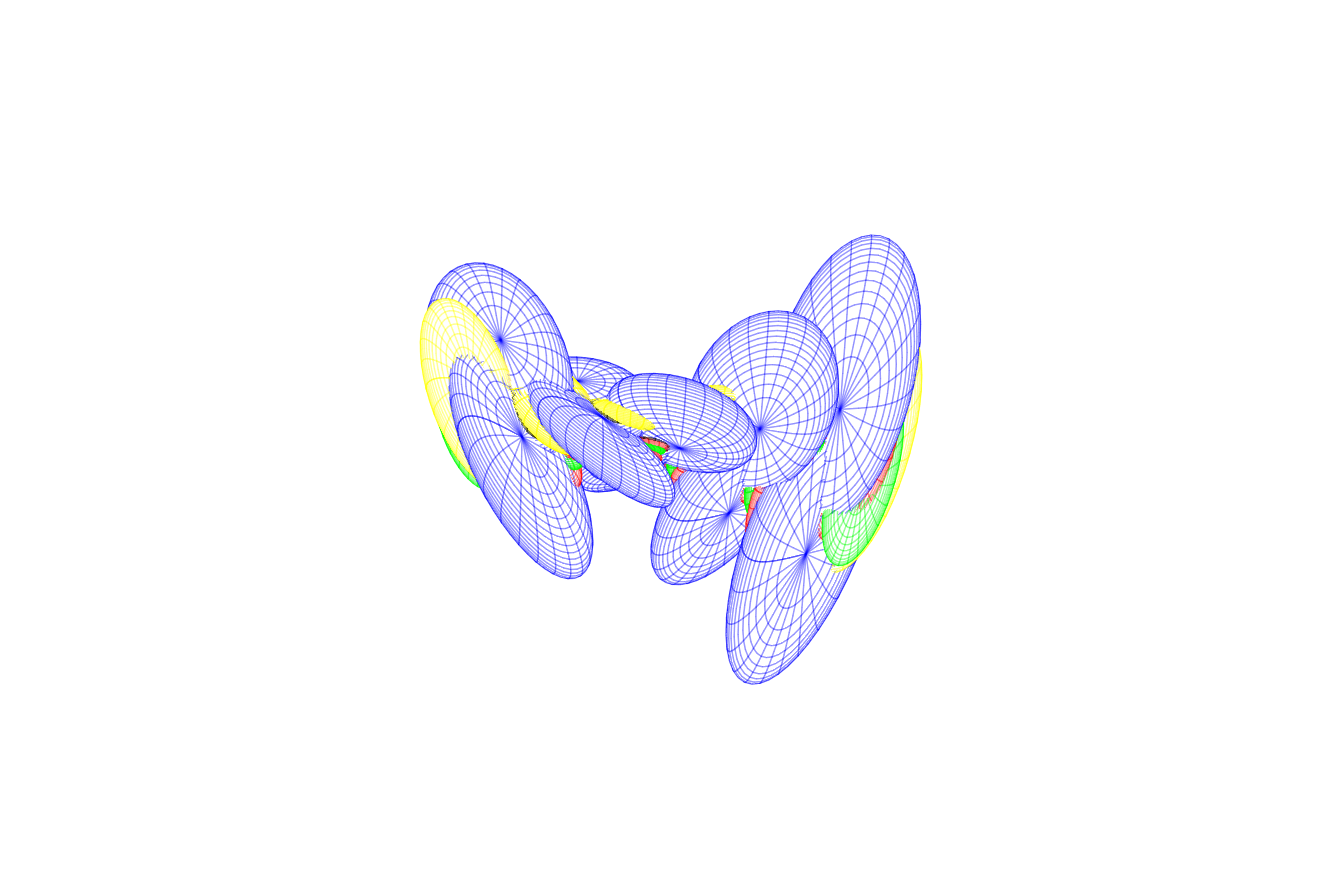}}
	\caption{Realistic view   of  the boundary of the ideal boundary of the Ford domain of the complex hyperbolic triangle group  $\Delta_{3,4,6;\infty}$.}
\label{figure:f6f}
\end{figure}

 Figure \ref{figure:f6f}  is  just for motivation, we will give rigorous   infinite ridge circles later.

We will show $C$ are contributed by the spinal spheres as follows:

  1. $\langle A \rangle$-translations of $\{B, b\}$, which are the  blue  colored polygons in Figure \ref{figure:f6f};

  2.  $\langle A \rangle$-translations of $\{BAB, bab\}$,    which are the  yellow  colored polygons in Figure \ref{figure:f6f};

  3.  $\langle A \rangle$-translations of $\{BAb, Bab\}$, which are the  pink colored   polygons in Figure \ref{figure:f6f};

  4.  $\langle A \rangle$-translations of $\{BaB, bAb\}$,  which are the  red colored   polygons in Figure \ref{figure:f6f};

  5.  $\langle A \rangle$-translations of $\{bAB, baB\}$,  which are the  black  colored   polygons in Figure \ref{figure:f6f};

  6.  $\langle A \rangle$-translations of $\{BaBaB, bAbAb\}$,  which are the  green colored   polygons in Figure \ref{figure:f6f}.

\begin{table}[htbp]
		\begin{tabular}{|c|c|}
			\hline
			Spinal sphere & Adjacent spinal spheres in  clockwise order  \\
			\hline

$baB$ & $[b]_{20}$,   $[BaB]_{6}$,   $[B]_{5}$\\
\hline
$bAB$ & $[B]_{5}$,   $[b]_{20}$,   $Ababa$\\
\hline
$BAb$ & $[b]_{5}$,   $[B]_{20}$,   $[bAb]_{6}$\\
\hline
$Bab$ & $[b]_{5}$,   $[B]_{20}$, $bab$\\
\hline
$(BAB)_{1}$ & $[B]_{20}$,   $[Aba]_{5}$, $ABaba$,  $[ABa]_{20}$, $[Aba]_{20}$\\
\hline
$(bab)_{1}$ & $[b]_{20}$,   $[aBA]_{20}$, $[b]_{5}$, $Bab$, $[B]_{20}$\\
\hline
$(BAB)_{2}$ & $[b]_{20}$,   $[B]_{20}$,  $[Aba]_{20}$, $[B]_{5}$, $bAB$\\
\hline
$(bab)_{2}$ & $[aBA]_{5}$,   $abABA$, $[abA]_{20}$, $[aBA]_{20}$, $[b]_{20}$\\
\hline
 $(BaB)_{1}$ & $[B]_{20}$,  $BaBaB$, $[abA]_{20}$,\\
\hline
$(bAb)_{1}$ & $[b]_{20}$,  $ABaBaBa$, $[ABa]_{20}$\\
\hline
 $(BaB)_{2}$ & $[abA]_{20}$,  $[B]_{5}$ $baB$, $[b]_{20}$, $[B]_{20}$,  $BaBaB$\\
\hline
$(bAb)_{2}$ & $[b]_{5}$,  $BAb$, $[B]_{20}$, $[b]_{20}$,  $ABaBaBa$, $[ABa]_{20}$\\
\hline
 $(BaBaB)_{1}$ & $[B]_{20}$,  $[abAbA]_{3}$, $[abA]_{20}$, $[BaB]_{6} (near~~baB)$\\
\hline
$(bAbAb)_{1}$ & $[ABa]_{20}$,  $[bAb]_{3}$, $[b]_{20}$, $[ABaBa]_{6} (near~~ AbaBA)$ \\
\hline
$(BaBaB)_{2}$ & $[abA]_{20}$,  $[BaB]_{3}$, $[B]_{20}$, $[abAbA]_{6} (near~~ aBAbA)$\\
\hline
 $(bAbAb)_{2}$ & $[b]_{20}$,  $[ABaBa]_{3}$, $[ABa]_{20}$, $[bAb]_{6} (near~~ BAb)$ \\
\hline	
$(B)_{1}$ & $[b]_{20}$, $[bAb]_{6}$, $BAb$, $[b]_{5}$, $[Aba]_{5}$, $BAB$,
$[Aba]_{20}$,  \\ & $BAB$, $[b]_{20}$, $bab$, $Bab$, $[b]_{5}$,
$[abA]_{5}$,  $[abAbA]_{6}$,  \\ & $BaBaB$, $[BaB]_{3}$, $[abA]_{20}$, $[abAbA]_{3}$, $[BaBaB]$, $[BaB]_{6}$\\
\hline	
 $(b)_{1}$  &  $[B]_{20}$, $[BaB]_{6}$, $baB$, $[B]_{5}$, $[aBA]_{5}$, $bab$, $[aBA]_{20}$, \\ &
 $bab$, $[B]_{20}$, $BAB$,  $bAB$,  $[B]_{5}$, $[ABa]_{5}$, $[ABaBa]_{6}$, \\ &  $bAbAb$, $[bAb]_{3}$, $[ABa]_{20}$, $[ABaBa]_{3}$,  $bAbAb$,  $[bAb]_{6}$  \\
\hline	
$(B)_{2}$ & $[abA]_{20}$, $[aBA]_{5}$, $[b]_{20}$, $baB$, $[BaB]_{6}$ \\
\hline
$(b)_{2}$  &  $[B]_{20}$,   $Bab$, $bab$,  $[aBA]_{20}$,  $[abA]_{5}$\\
\hline
$(B)_{3}$ & $[b]_{20}$, $bAB$, $BAB$, $[Aba]_{20}$, $[ABa]_{5}$\\
\hline
$(b)_{3}$ & $[B]_{20}$, $BAb$, $[bAb]_{6}$, $[ABa]_{20}$, $[Aba]_{5}$ \\
\hline		
\end{tabular}
\caption{Adjacent relations in the boundary of the ideal boundary of the Ford domain of   $\Delta_{3,4,6;\infty}$.}
\label{table:346adjacet}
\end{table}

In Table \ref{table:346adjacet}, we give the adjacent relations of the spinal spheres   in the ideal boundary of the Ford domain boundary of   $\Delta_{3,4,6;\infty}$.

But this is more complicated than the  $\Delta_{3,4,4;\infty}$ case. For example the spinal sphere of  $B$ give 3 parts in this boundary,  one icosagon and two pentagons, we denote them as $(B)_{1}$,  $(B)_{2}$ and  $(B)_{3}$ in Table \ref{table:346adjacet}. Where it is easy to see $(B)_{1}$ is an icosagon,   $(B)_{2}$ and  $(B)_{3}$ are pentagons. Moreover, we also use $[B]_{20}$ to denote $(B)_{1}$, and   $[B]_{5}$ to  denote any of $(B)_{2}$ or  $(B)_{3}$, and we can know which it is since the adjacent spinal spheres of  $(B)_{2}$ and  $(B)_{3}$ are different. By this we mean $(B)_{3}$ is adjacent to the spinal spheres of  $bAB$ and $BAB$,   $(B)_{2}$ is adjacent to  the spinal sphere of $baB$. Each of the spinal spheres of  $\{bAB, baB\}$ give a triangle. Similarly,  $[bAb]_{6}$  means part of the spinal sphere of  $bAb$ which is a hexagon.

For concrete information about  adjacent relations of the spinal  spheres,  see also Figure \ref{figure:comb6f}. In Figure \ref{figure:comb6f}, we only give some of the labels, which means the region is a part of the spinal sphere with that label. The remaining labels can be obtained from the partial labels of  in Figure \ref{figure:comb6f} and Table \ref{table:346adjacet}.
 Figure \ref{figure:comb6f} gives a tilling of the plane, we glue together the tilling when they have the same label,  we get the infinite annulus $C=\partial E$,  the $A$-action is just  a  negative horizontal translation.

\begin{figure}[htbp]
	\scalebox{0.25}[0.25]{\includegraphics {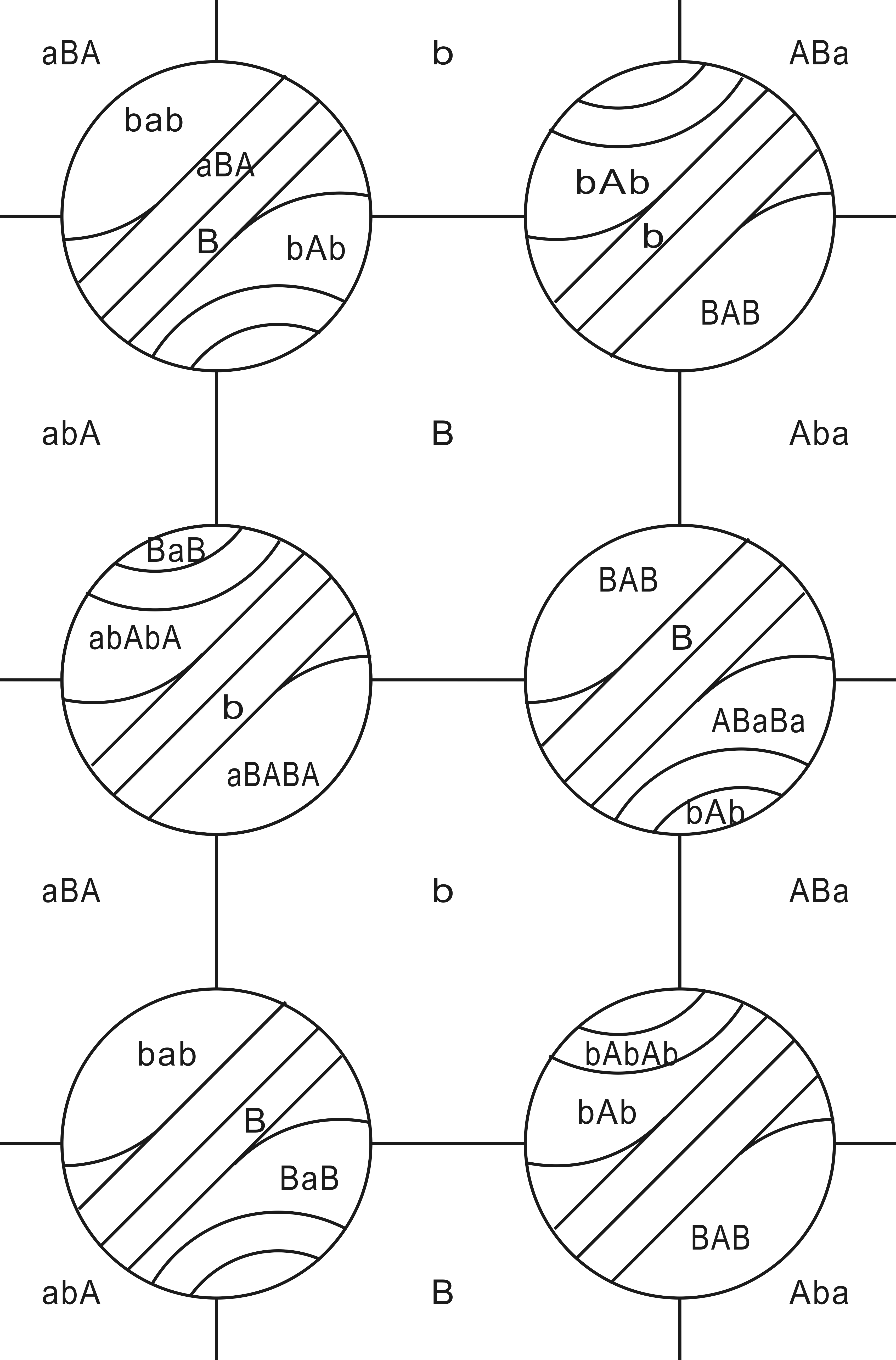}}
	\caption{A combinatorial picture of the  boundary of the ideal boundary of the Ford domain of the complex hyperbolic triangle group  $\Delta_{3,4,6;\infty}$.}
\label{figure:comb6f}
\end{figure}

\begin{table}[htbp]
		\begin{tabular}{|c|c|}
			\hline
			Number & Ridge relation  \\
			\hline
$R_{1}$ & $(r_1, b\cap B)\rightarrow (r_{62}, b\cap B)\rightarrow$  \\
& $(r_{9}, Aba\cap ABa) \rightarrow (r_{32}, b \cap B) \rightarrow (r_1, b\cap B)$
\\
			\hline
$R_{2}$ & $ (r_2, bAb\cap b)\rightarrow (r_{33}, B\cap abA)\rightarrow$   \\
&    $(r_{8}, ABa\cap b)\rightarrow (r_{65}, ABa\cap ABaBa) \rightarrow (r_2, bAb\cap b)$
\\
			\hline
$R_{3}$ & $(r_3, bAbAb\cap b)\rightarrow (r_{34}, B\cap abAbA)\rightarrow$  \\
& $(r_{7}, ABa\cap bAb) \rightarrow (r_{69}, ABaBa \cap bAbAb) \rightarrow (r_3, bAbAb\cap b)$ \\
			\hline
$R_{4}$ & $(r_4, ABaBa\cap b)\rightarrow (r_{35}, B\cap BaBaB)\rightarrow (r_{6}, ABa\cap bAbAb)$  \\
& $(r_{71}, bAbAb\cap bAb) \rightarrow (r_4, ABaBa\cap b)$ \\
			\hline
$R_{5}$ & $(r_5, ABaBa\cap ABa)\rightarrow (r_{36}, BaB\cap B)\rightarrow (r_{22}, b\cap ABa) $ \\
& $ \rightarrow (r_{37}, abA \cap B)  \rightarrow (r_{72}, b \cap bAb) \rightarrow (r_5, ABaBa\cap ABa)$ \\
			\hline
$R_{6}$ & $(r_{10}, ABaba\cap ABa)\rightarrow (r_{31}, Bab\cap B)\rightarrow (r_{23}, b\cap BaB)$ \\
& $ \rightarrow (r_{18}, Aba\cap ABaBa) \rightarrow (r_{44}, bAb \cap BAb) \rightarrow (r_{10}, ABaba\cap ABa)$ \\
			\hline
$R_{7}$ & $(r_{11}, BAB\cap ABa)\rightarrow (r_{30}, aBABa\cap B)\rightarrow (r_{24}, b\cap baB) \rightarrow$ \\
& $(r_{17}, Aba \cap AbaBa) \rightarrow (r_{58}, bAB\cap BAB) \rightarrow (r_{11}, BAB\cap ABa)$
\\
			\hline
$R_{8}$ & $(r_{12}, BAB\cap Aba)\rightarrow (r_{29}, aBABA\cap b)\rightarrow (r_{50}, B\cap Aba) \rightarrow$ \\
& $ (r_{59}, B\cap BAB) \rightarrow (r_{12}, BAB\cap Aba)$
\\
			\hline
$R_{9}$ & $(r_{13}, B\cap Aba)\rightarrow (r_{28}, aBA\cap b)\rightarrow (r_{54}, B\cap BAB) \rightarrow$ \\
& $(r_{14}, BAB\cap AbA) \rightarrow (r_{27}, bab\cap b) \rightarrow (r_{13}, B\cap Aba)$
\\
			\hline

$R_{10}$ & $(r_{15}, B\cap Aba)\rightarrow (r_{26}, aBA\cap b)\rightarrow (r_{55}, B\cap BAB) \rightarrow$  \\
& $ (r_{52}, BAB\cap Aba) \rightarrow (r_{15}, B\cap Aba)$
\\
	\hline
$R_{11}$ & $(r_{16}, ABa\cap Aba)\rightarrow (r_{25}, B\cap b)\rightarrow$ \\
& $(r_{56}, B\cap b) \rightarrow (r_{48}, B \cap b) \rightarrow (r_{16}, ABa\cap Aba)$
\\
			\hline
$R_{12}$ & $(r_{19}, ABaBa\cap ABa)\rightarrow (r_{40}, BaB\cap B)\rightarrow$ \\
& $(r_{64}, b\cap ABa) \rightarrow (r_{45}, b \cap bAb) \rightarrow (r_{19}, ABaBa\cap ABa)$
\\
			\hline
$R_{13}$ & $(r_{20}, bAbAb\cap ABa)\rightarrow (r_{39}, BaBaB\cap B)\rightarrow$ \\
& $(r_{68}, b\cap ABaBa) \rightarrow (r_{42}, bAb \cap bAbAb) \rightarrow (r_{20}, bAbAb\cap ABa)$
\\
			\hline
$R_{14}$ & $(r_{21}, bAb\cap ABa)\rightarrow (r_{38}, abAbA\cap B)\rightarrow$  \\
& $(r_{70}, b\cap bAbAb) \rightarrow (r_{41}, bAbAb \cap ABaBa) \rightarrow (r_{21}, bAb\cap ABa)$
\\
			\hline
$R_{15}$ & $(r_{43}, bAb\cap B)\rightarrow (r_{60}, b\cap bAB)\rightarrow (r_{67}, AbaBa\cap ABaBa)$ \\
&  $\rightarrow (r_{43}, bAb\cap B)$ \\

	\hline	
$R_{16}$ & $(r_{46}, BAb\cap B)\rightarrow (r_{57}, b\cap BAB)\rightarrow (r_{53}, BAB\cap ABaba)$ \\
&  $ \rightarrow (r_{46}, BAb\cap B)$ \\
	\hline
$R_{17}$ & $(r_{47}, BAb\cap b)\rightarrow (r_{63}, ABa\cap B)\rightarrow (r_{51}, Aba\cap ABaba)$ \\
&  $ \rightarrow (r_{47}, BAb\cap b)$ \\
		\hline
$R_{18}$ & $(r_{49}, b\cap Aba)\rightarrow (r_{61}, B\cap bAB)\rightarrow (r_{66}, AbaBa\cap ABa)$ \\
&  $ \rightarrow (r_{49}, b\cap Aba)$ \\
			\hline
		\end{tabular}
\caption{Infinite ridge relations  of the Ford domain of $\Delta_{3,4,6;\infty}$, part I.}
\label{table:ridge346}
\end{table}

From  Figure \ref{figure:comb6f}, we have Tables \ref{table:ridge346} and \ref{table:circle346}, which give all the infinite ridge circle relations  of the Ford domain of $\Delta_{3,4,6;\infty}$.
We should note that the side-pairings and the ridge relations are more complicated than the $\Delta_{3,4,4;\infty}$ case.
For example, the circle relation for the ridge $R_{1}$, which is $B*a*A*B*B$: Firstly, from $(r_1, b\cap B)\rightarrow (r_{62}, b\cap B)$ we get the word  $B$, which  maps the isometric sphere of $B$ to the isometric sphere of $b$. Secondly,  from  $(r_{62}, b\cap B)$, we  consider the map $B$ which maps   the isometric sphere of $B$ to the isometric sphere of $b$, even through we get $(r_{32}, b \cap B)$ now, but note that here we don't get the side-paring from our chosen fundamental domain  in Subsection \ref{subsection:346spine}, we must perform the translation give by $A$, then we get $(r_{9}, Aba\cap ABa)$, now we get the side-pairing from $(r_{32}, b \cap B)$ to  $(r_{9}, Aba\cap ABa)$. In other words, $A*B$ is a side-pairing from  $(r_{62}, b\cap B)$  to $(r_{9}, Aba\cap ABa)$. Together with the first map $B$ from  $(r_1, b\cap B)$ to $(r_{62}, b\cap B)$, now we have the word  $A*B*B$; Thirdly,  using the $A^{-1}=a$-translation,  from $(r_{9}, Aba\cap ABa) $ we get $ (r_{32}, b \cap B)$, so we have the word $a*A*B*B$ now. Fourthly,  from  $(r_{32}, b \cap B) \rightarrow (r_{1}, b\cap B)$, we have the word $B$. In total, we have the word  $B*a*A*B*B$, which is $B^3$, so the circle relation is $B^3=id$. We omit details for other ridges.

\begin{table}[htbp]
		\begin{tabular}{|c|c|c|}
			\hline
			Number& Ridge cycle  & Cycle relation \\
			\hline
$R_{1}$ &   $B*a*A *B*B$ &    $B^3=id$\\
			\hline
$R_{2}$   & $a*(ABaBa)*A*b*A*b$ &  $id$\\
			\hline
$R_{3}$  & $A*(bAbAb)*A*(bAb)*A*b$ &  $(Ab)^{6}=id$\\
			\hline
$R_{4}$   & $A*(bAb)*A*(bAbAb)*A*b$ &  $(Ab)^{6}=id$
\\
			\hline
$R_{5}$   & $A*(bAb)*B*a*B*a$ &  $id$
\\
			\hline
$R_{6}$   & $A*(BAb)*a*(ABaBa)*A*B*a$ &  $B^3=id$
\\
	\hline
$R_{7}$  & $A*(BAB)*a*(AbaBa)*A*B*a$ &  $B^3=id$
\\
			\hline
$R_{8}$   & $A*(BAB)*a*(Aba)*b*a$ &  $id$
\\
			\hline
$R_{9}$   & $b*a*A*(BAB)*b*a$ &  $id$
\\
			\hline

$R_{10}$   & $a*(Aba)*A*BAB*b*a$ &  $id$
\\
	\hline
$R_{11}$    & $A*b*b*b*a$ &  $b^{3}=id$
\\
			\hline
$R_{12}$  & $A*(bAb)*a*(ABa)*B*a$ &  $id$
\\
			\hline
$R_{13}$  & $A*(bAbAb)*a*(ABaBa)*B*a$ &  $id$
\\
			\hline
$R_{14}$ & $a*(ABaBa)*A*(bAbAb)*B*a$ &  $id$
\\
			\hline
$R_{15}$  & $a*(ABaBa)*A*(bAB)*B$ &  $B^{3}=id$\\

	\hline	
$R_{16}$  & $a*(ABaba)*A*(BAB)*B$ &  $B^{3}=id$\\
	\hline
$R_{17}$ & $a*(ABaba)*A*B*A*b$ &  $id$\\
		\hline
$R_{18}$  & $a*(ABa)*A*(bAB)*a*(Aba)$ &  $id$\\
			\hline
		\end{tabular}
\caption{Infinite ridge relations  of the Ford domain of $\Delta_{3,4,6;\infty}$, part II.}
\label{table:circle346}
\end{table}

Moreover, by studying the intersections of the spinal spheres,  we have
\begin{prop} \label{prop: 346para} There is no parabolic fixed point  in the boundary of the Ford domain $\Delta_{3,4,6;\infty}$.
\end{prop}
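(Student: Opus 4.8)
The plan is to follow the same strategy as in the proof of the corresponding statement for $\Delta_{3,4,4;\infty}$ in Section \ref{section: ford344}, where it was shown that the only parabolic elements of the even subgroup are conjugates of powers of $A$; the combinatorics here is heavier, but the argument is identical in structure. The three ingredients are: a presentation of $\Gamma_1$ obtained from the Poincar\'e polyhedron theorem, the resulting classification of torsion elements, and the fact that $D_{S^*}$ is a fundamental domain for $\Gamma_1$ modulo $\Gamma_\infty=\langle A\rangle$.

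First I would apply the Poincar\'e polyhedron theorem to $D_{S^*}$, exactly as in Section \ref{section:comb344}, using the (suppressed) finite-ridge data together with the infinite-ridge cycles listed in Tables \ref{table:ridge346} and \ref{table:circle346}. The relators of the resulting presentation are the ridge-cycle transformations. Reading off Table \ref{table:circle346}, the only nontrivial cycle relations are $B^{3}=id$, $b^{3}=id$ and $(Ab)^{6}=id$ (cycles $R_1,R_3,R_4,R_6,R_7,R_{11},R_{15},R_{16}$), while the finite ridges contribute $B^{3}=id$ and $(AB)^{4}=id$ exactly as for $\Delta_{3,4,4;\infty}$. Consequently every elliptic element of $\Gamma_1$ is conjugate to a power of one of $B$, $Ab=AB^{2}$ or $AB=I_1I_3$. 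Using the explicit matrices of Section \ref{sec-gens} one checks that $B$ is regular elliptic of order $3$, that $AB^{2}$ is elliptic of order $6$, and that $AB=I_1I_3$ is elliptic of order $4$; in particular each of these, hence each of their powers, is elliptic and never parabolic. Moreover, since the hypotheses of the Poincar\'e polyhedron theorem hold, the cycle transformations around every ridge and ideal vertex of $D_{S^*}$ close up to the identity, so no composition of these elliptic generators around an ideal vertex of $D_{S^*}$ is a nontrivial parabolic; in particular each ideal vertex of $D_{S^*}$ (the corners enumerated in Table \ref{table:346adjacet} and Figure \ref{figure:comb6f}) has trivial stabilizer in $\Gamma_1$. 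Therefore the only parabolic elements of $\Gamma_1$ are conjugates of powers of $A$, and the set of parabolic fixed points of $\Gamma_1$ in $\partial{\bf H}^2_{\mathbb C}$ coincides with the $\Gamma_1$-orbit of $q_\infty$.

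It then remains to observe that this orbit does not meet the ideal boundary of the Ford domain away from $q_\infty$ itself. Since $D_{S^*}$ is the Ford domain of $\Gamma_1$, it is a fundamental domain for $\Gamma_1$ modulo $\Gamma_\infty=\langle A\rangle$; as $\Gamma_\infty$ fixes $q_\infty$, the orbit $\Gamma_1\,q_\infty$ intersects $D_{S^*}$ only in the single point $q_\infty$. Hence no point of $D_{S^*}\cap(\partial{\bf H}^2_{\mathbb C}-\{q_\infty\})$ — equivalently, no point of the infinite annulus $C=\partial E$ pictured in Figure \ref{figure:comb6f} — is a parabolic fixed point of $\Gamma_1$, which is the assertion of the proposition.

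The main obstacle is the middle step: carrying out the Poincar\'e polyhedron theorem verification for $\Delta_{3,4,6;\infty}$ and, in particular, tracing all eighteen infinite ridge cycles (together with the finite ones) to confirm that every ideal vertex has trivial stabilizer and that none of the cycle transformations, singly or in combination around a vertex, is parabolic. This is a lengthy but routine bookkeeping task once the data in Tables \ref{table:ridge346}, \ref{table:circle346} and \ref{table:346adjacet} are in hand; determining the orders and types of $B$, $Ab$ and $AB$ from the matrices of Section \ref{sec-gens} is an elementary computation.
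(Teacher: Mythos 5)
Your proposal is correct and follows essentially the approach the paper itself uses for the analogous statement in the $\Delta_{3,4,4;\infty}$ case (classify all elliptic cycle transformations via the Poincar\'e polyhedron theorem, check the ideal vertices have trivial stabilizers so that the only parabolics are conjugates of powers of $A$, and then observe that the orbit of $q_\infty$ meets the Ford domain only at $q_\infty$); the paper gives no explicit argument for the $\Delta_{3,4,6;\infty}$ proposition beyond a pointer to the spinal-sphere intersections, so your write-up is if anything more complete. The only slight inaccuracy is the attribution of $(AB)^4=id$ to the bounded ridges ``exactly as for $\Delta_{3,4,4;\infty}$'' --- in that case the relation $(AB)^4=id$ actually arose from infinite ridge cycles --- but this does not affect the argument.
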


From Tables  \ref{table:ridge346} and \ref {table:circle346}, and the information about bounded ridges that we omitted as in Section \ref{section:comb344}, we can use the Poincar\'e polyhedron theorem to see that  the partial Ford domain bounded by $\langle A \rangle$-translations of the isometric  spheres of $\{B, b\}$, $\{BAB, bab\}$, $\{BAb, Bab\}$, $\{BaB, bAb\}$,  $\{bAB, baB\}$  and  $\{BaBaB, bAbAb\}$ is in fact the Ford domain. So we can study the manifold at infinity of  $\Delta_{3,4,6;\infty}$.

\subsection{A 2-spine $S$ of the 3-manifold at infinity of $\Delta_{3,4,6;\infty}$} \label{subsection:346spine}

We now consider the canonical 2-spine $S$ of the 3-manifold at infinity of $\Delta_{3,4,6;\infty}$.

\begin{figure}[htbp]
	\scalebox{0.35}[0.35]{\includegraphics {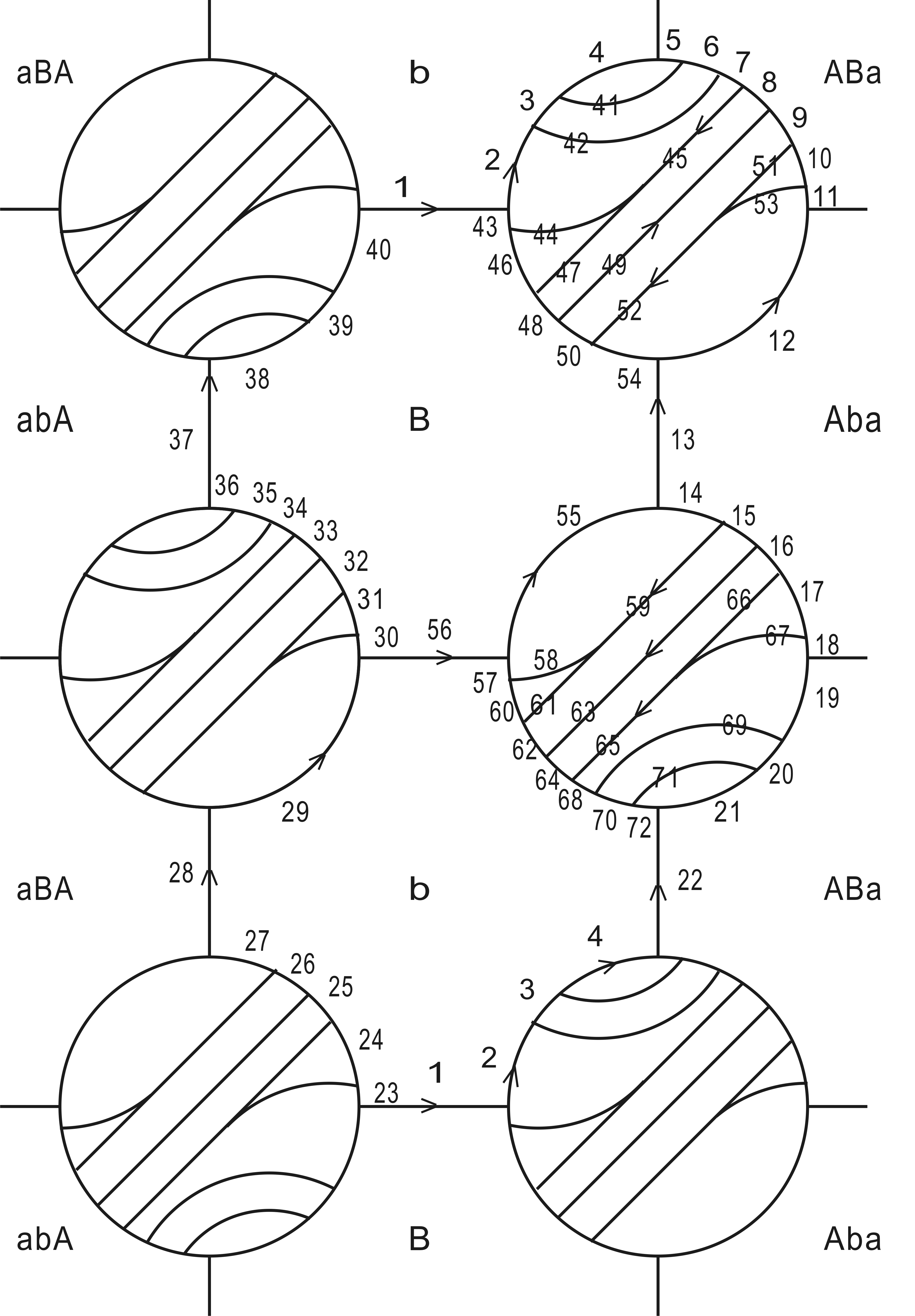}}
	\caption{Fundamental domain and edge cycles in the boundary of  the ideal boundary of the Ford domain of the complex hyperbolic triangle group  $\Delta_{3,4,6;\infty}$.}
	\label{figure:boundary6forientation}
\end{figure}

As in the case of  $\Delta_{3,4,4;\infty}$, each infinite ridge in the boundary of the Ford domain gives us an edge in the canonical  2-spine of the 3-manifold at infinity. We take a fundamental domain of $A$ acting on the infinite annulus $C$, which in Figure \ref{figure:boundary6forientation} is the largest region  bounded by the edges with labels. It consists of:

(1). Two big icosagons correspond to the spinal spheres of $\{B, b\}$.

(2). Two hexagons correspond to the spinal spheres of  $\{ABaBa, bAb\}$.

(3). Four  pentagons  correspond to the spinal spheres of the elements  $\{B,b\}$ and $\{Aba, ABa\}$. Two more  pentagons  correspond to the spinal spheres of  $BAB$. Note that since $(AB)^4=id$,  $BAB=ababa$ and $Ababa$ have the same isometric sphere.

(4). Two quadrilateral  correspond to the spinal spheres of  $bAbAb$. Note that since $(AB^2)^6=id$,  $bAbAb=ABaBaBa$, the isometric sphere of $BaBaB$ is just an $A$-translation of the isometric sphere of $bAbAb$.

(5). Six triangles  correspond to the spinal spheres of   $\{bAb, ABaBa\}$, $\{BAb,ABaba\}$ and  $\{bAB,AbaBa\}$.

\begin{table}[htbp]\label{table: 6edge}
		\begin{tabular}{|c|c|}
			\hline
			Edge & Equivalence class   \\
			\hline
$E_1$ & $e_{1}$, $e_{62}$, $e_{9}$, $e_{32}$\\
			\hline
$E_2$ & $e_{2}$, $e_{33}$, $e_{8}$, $e_{65}$\\
			\hline
$E_3$ & $e_{3}$, $e_{34}$, $e_{7}$, $e_{69}$\\
			\hline
$E_4$ & $e_{4}$, $e_{35}$, $e_{6}$, $e_{71}$\\
			\hline
$E_5$ & $e_{5}$, $e_{36}$, $e_{22}$, $e_{37}$, $e_{72}$\\
			\hline
$E_6$ & $e_{10}$, $e_{31}$, $e_{23}$, $e_{18}$, $e_{44}$\\
			\hline
$E_7$ & $e_{11}$, $e_{30}$, $e_{24}$, $e_{17}$, $e_{58}$\\
\hline
$E_8$ & $e_{12}$, $e_{29}$, $e_{50}$, $e_{59}$\\
			\hline
$E_9$ & $e_{13}$, $e_{28}$, $e_{54}$, $e_{14}$, $e_{27}$\\
			\hline
$E_{10}$ & $e_{15}$, $e_{26}$, $e_{55}$, $e_{52}$\\
			\hline
$E_{11}$ & $e_{16}$, $e_{25}$, $e_{56}$, $e_{48}$\\
			\hline
$E_{12}$ & $e_{19}$, $e_{40}$, $e_{64}$, $e_{45}$\\
			\hline
$E_{13}$ & $e_{20}$, $e_{39}$, $e_{68}$, $e_{42}$\\
			\hline
$E_{14}$ & $e_{21}$, $e_{38}$, $e_{70}$, $e_{41}$\\
			\hline
$E_{15}$ & $e_{43}$, $e_{60}$, $e_{67}$\\
			\hline
$E_{16}$ & $e_{46}$, $e_{57}$, $e_{53}$\\
			\hline
$E_{17}$ & $e_{47}$, $e_{63}$, $e_{51}$\\
			\hline
$E_{18}$ & $e_{49}$, $e_{61}$, $e_{66}$\\
			\hline
		\end{tabular}
\caption{Equivalence classes of edges in 1-skeleton of the canonical 2-spine $S$ of the 3-manifold at infinity of $\Delta_{3,4,6;\infty}$.}
\label{table:edge6}
\end{table}

Note that in Figure \ref{figure:boundary6forientation},  we labeled the edges from $1$ to $72$, so there are $72$ edges in total.

 Now Table \ref{table:edge6} is obtained from Figure \ref{figure:boundary6forientation},  Tables \ref{table:ridge346} and \ref{table:circle346}, it gives the equivalence classes of edges in the canonical 2-spine $S$ of the 3-manifold at infinity  of $\Delta_{3,4,6;\infty}$, a pair of  2-cells in the fundamental domain with inverse labels  as in Figure \ref{figure:boundary6forientation} give a 2-cell of the 2-spine $S$.

In Table \ref{table:disk6}, we give the information of attaching disks of the canonical 2-spine $S$ of the 3-manifold at infinity of $\Delta_{3,4,6;\infty}$, which is obtained from Figure \ref{figure:boundary6forientation} and Table \ref{table:edge6}. The orientations of the edges and the equivalence classes of end points of the edges is showed in Table \ref{table:vertex6}.

\begin{table}[htbp]\label{table: 6boundary}
	\begin{tabular}{|c|c|}
		\hline
		Disk & Boundary path \\
		\hline
		
		$[B]_{20}$  &
		$E_{1}*E^{-1}_{15}*E^{-1}_{16}*E^{-1}_{11}*E^{-1}_{8}*(E^{-1}_{9})^{2}*E^{-1}_{10}*E^{-1}_{11}*$
		\\ &   $E_{7}*E_{6}*E_{1}*E_{2}*E_{3}*E_{4}*(E_{5})^{2}*E_{14}*E_{13}*E_{12}$
		\\
		\hline
		$ABaBa$   &   $E_{4}*E^{-1}_{5}*E_{14}$\\
		\hline
		$bAbAb$ &    $E_{3}*E^{-1}_{14}*E^{-1}_{4}*E_{13}$ \\
		\hline
		$bAb$ &  $E_{2}*E^{-1}_{13}*E^{-1}_{3}*E_{12}*E^{-1}_{6}*E_{15}$\\
		\hline
		$BAb$ &   $E_{6}*E^{-1}_{17}*E_{16}$\\
		\hline
		$[b]_{5}$ &  $E^{-1}_{12}*E^{-1}_{2}*E^{-1}_{18}*E_{11}*E_{17}$\\
		\hline
		$Aba$ &     $E_{1}*E^{-1}_{18}*E^{-1}_{8}*E^{-1}_{10}*E_{17}$ \\
		\hline
		$Ababa$ &     $E_{17}*E^{-1}_{6}*E^{-1}_{16}$ \\
		\hline
		$BAB$ &    $E_{16}*E^{-1}_{7}*E^{-1}_{8}*E_{9}*E^{-1}_{10}$\\
		\hline
		$bAB$ &   $E_{18}*E^{-1}_{15}*E^{-1}_{7}$ \\
		\hline
		
	\end{tabular}
	\caption{Boundary of the attaching disks of the canonical 2-spine $S$ of the manifold at infinity of  $\Delta_{3,4,6;\infty}$.}
	\label{table:disk6}
\end{table}

\begin{table}[htbp]\label{table: 6vertices}
		\begin{tabular}{|c|c|}
			\hline
			Vertex & Equivalence class   \\
			\hline
$V_1$ & $E_{1}^{+}$, $E_{2}^{-}$, $E_{15}^{+}$, $E_{18}^{+}$\\
			\hline
$V_2$ & $E_{1}^{-}$, $E_{12}^{+}$, $E_{6}^{+}$, $E_{17}^{+}$\\
			\hline
$V_3$ & $E_{2}^{+}$, $E_{3}^{-}$, $E_{13}^{+}$, $E_{12}^{-}$\\
			\hline
$V_4$ & $E_{4}^{-}$, $E_{14}^{+}$, $E_{3}^{+}$, $E_{13}^{-}$\\
			\hline
$V_5$ & $E_{4}^{+}$, $E_{5}^{+}$, $E_{5}^{-}$, $E_{14}^{-}$\\
			\hline
$V_6$ & $E_{6}^{-}$, $E_{7}^{+}$, $E_{16}^{+}$, $E_{15}^{-}$\\
			\hline
$V_7$ & $E_{8}^{-}$, $E_{9}^{+}$, $E_{9}^{-}$, $E_{10}^{+}$\\
			\hline
$V_8$ & $E_{7}^{-}$, $E_{8}^{+}$, $E_{11}^{-}$, $E_{18}^{-}$\\
			\hline
$V_9$ & $E_{10}^{-}$, $E_{11}^{+}$, $E_{16}^{-}$, $E_{17}^{-}$\\
			\hline
			
		\end{tabular}
\caption{Equivalence classes of vertices  of the canonical  2-spine $S$ of the 3-manifold at infinity  of $\Delta_{3,4,6;\infty}$.}
\label{table:vertex6}
\end{table}

In Table \ref{table:vertex6}, for example, $E_{1}^{+}$ means the positive end of the oriented edge $E_{1}$, and  $E_{1}^{-}$ means the negative end of the oriented edge $E_{1}$.

From  Table \ref{table:vertex6}, we get Figure \ref{figure:1-skeleton6f}, which is the 1-skeleton of the canonical 2-spine $S$ of our 3-manifold at infinity of $\Delta_{3,4,6;\infty}$. From Figure \ref{figure:1-skeleton6f}, we consider the fundamental group of the graph.  In Figure \ref{figure:1-skeleton6f}, the orientation of  the edges are obtained from Table \ref{table:vertex6}:
edge $E_1$ is  from $V_2$ to $V_1$; edge $E_2$ is  from $V_1$ to $V_3$; edge $E_3$ is  from $V_3$ to $V_4$; edge $E_4$ is  from $V_4$ to $V_5$; edge $E_5$ is  from $V_5$ to $V_5$; edge  $E_6$ is  from $V_2$ to $V_6$; edge $E_7$ is  from $V_8$ to $V_6$;
edge $E_8$ is  from $V_7$ to $V_8$; edge $E_9$ is  from $V_7$ to $V_7$; edge $E_{10}$ is  from $V_9$ to $V_7$; edge $E_{11}$ is  from $V_8$ to $V_9$; edge  $E_{12}$ is  from $V_3$ to $V_2$; edge $E_{13}$ is  from $V_4$ to $V_3$; edge $E_{14}$ is  from $V_5$ to $V_4$; edge $E_{15}$ is  from $V_6$ to $V_1$; edge $E_{16}$ is  from $V_9$ to $V_6$; edge $E_{17}$ is  from $V_9$ to $V_2$; edge $E_{18}$ is  from $V_8$ to $V_1$.

\begin{figure}[htbp]
	\scalebox{0.25}[0.25]{\includegraphics {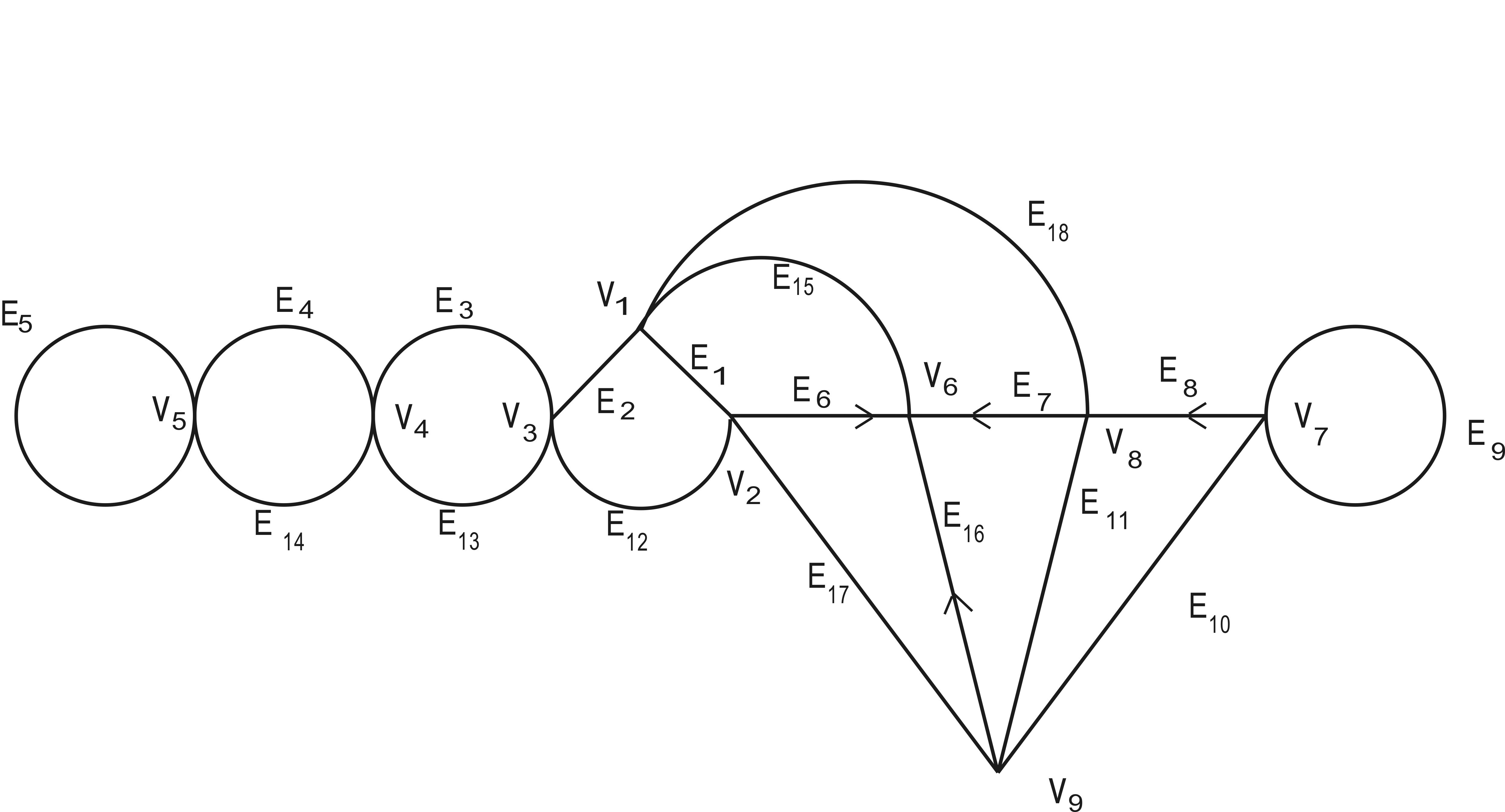}}
	\caption{The 1-skeleton of the canonical 2-spine $S$ of  the 3-manifold at infinity of  the complex hyperbolic triangle group $\Delta_{3,4,6;\infty}$.}
\label{figure:1-skeleton6f}
\end{figure}

We let $\phi_1=E_1*E_2*E_{12}$,   $\phi_2=E^{-1}_{12}*E_3*E_{13}*E_{12}$,  $\phi_3=E^{-1}_{12}*E_3*E_{4}*E_{14}*E^{-1}_{3}*E_{12}$,
 $\phi_4=E^{-1}_{12}*E_3*E_{4}*E_{5}*E^{-1}_{4}*E^{-1}_{3}*E_{12}$,
 $\phi_5=E_1*E^{-1}_{15}*E_{6}$,
$\phi_6=E_1*E^{-1}_{18}*E_{7}*E_{6}$,
  $\phi_7=E^{-1}_{17}*E_{16}*E_{6}$,
$\phi_8=E^{-1}_6*E^{-1}_{7}*E_{11}*E_{17}$, $\phi_9=E^{-1}_6*E^{-1}_{7}*E^{-1}_{8}*E^{-1}_{10}*E_{17}$,
$\phi_{10}=E^{-1}_6*E^{-1}_{7}*E^{-1}_{8}*E_{9}*E_{8}*E_{7}*E_{6}$
be closed loops in the graph.
Here $E^{-1}_i$ is the inverse path of $E_i$.
And it is easy to see $\phi_{i}$, $1 \leq i \leq 10$, is a  generator set of the fundamental group  of the graph with base the vertex  $V_2$.

\begin{table}[htbp]\label{table: 6relation}
		\begin{tabular}{|c|c|}
			\hline
			 Disk & Relation \\
			\hline
$[B]_{20}$ &
$\phi_{5}\phi^{-1}_7\phi^{-1}_{8}\phi^{-1}_{10}\phi^{-1}_{10}\phi_{9}\phi^{-1}_{8}\phi_{1}\phi_{4} \phi_{4} \phi_3\phi_{2}$
\\
			\hline
$ABaBa$ &  $\phi^{-1}_{4}\phi_{3}$\\
			\hline
$bAbAb$ &    $\phi^{-1}_{3}\phi_{2}$\\
			\hline
$bAb$ &  $\phi_1\phi^{-1}_{2}\phi^{-1}_{5}$ \\
			\hline
$BAb$ &  $\phi_7$ \\
			\hline
$[b]_{5}$ &  $\phi^{-1}_{1}\phi_6\phi_{8}$ \\
			\hline
$Aba$ &  $\phi_6\phi_{9}$  \\
			\hline
$Ababa$ &    $\phi^{-1}_{7}$  \\
			\hline
$BAB$ &  $\phi_7\phi_{10}\phi_9$\\
			\hline
$bAB$ &   $\phi_5\phi^{-1}_{6}$\\
			\hline

		\end{tabular}
\caption{Relations of  the attaching disks of the 2-spine $S$ of the manifold at infinity of $\Delta_{3,4,6;\infty}$.}
\label{table:6relation}
\end{table}

From Table \ref{table:disk6}, we get Table \ref{table:6relation}, then we get a presentation of the fundamental group  $\pi_{1}(S)$ of the 2-spine $S$, and so the fundamental group of $M$, the manifold at infinity of $\Delta_{3,4,6;\infty}$.

Let $s090$ be the one-cusped hyperbolic manifold in Snappy  Census \cite{CullerDunfield:2014}, it is hyperbolic with volume 3.89049957640...
and $$\pi_{1}(s090)=\langle x, y |x^5y^{-1}x^{-1}y^3x^{-1}y^{-1}\rangle.$$

Using Magma, it is easy to see $\pi_{1}(s090)$ and $\pi_{1}(S)$ above are isomorphic. This finishes the proof of Theorem \ref{thm:main} for  $\Delta_{3,4,6;\infty}$.

\section*{Acknowledgement}
This work was carried out while the second author was visiting the Fudan university under the financial support by the GaoFeng plan from School of   Mathematical Sciences, Fudan University. We thank Fudan University for excellent working conditions.

\bibliographystyle{amsplain}

\end{document}